\documentclass[reqno, 10pt]{amsart}
\usepackage{amssymb, amsmath}
\usepackage{amsthm, amsfonts,mathrsfs}
\usepackage{times}
%\usepackage{showkeys}
%\setlength{\oddsidemargin}{0mm}
%\setlength{\evensidemargin}{0mm} \setlength{\topmargin}{-15mm}
%\setlength{\textheight}{220mm} \setlength{\textwidth}{155mm}
%%%%%%%%%%%%%%%%%%%%%%%%%%%%%%%%%%
%%%%%%%%%%%%%%%%%%%%%%%%%%%%%%%%%%
%MACROS AVEC ARGUMENTS%

\def\inte#1{
\displaystyle\mathop{#1\kern0pt}^\circ }

%MACROS MECAFLU%

%ABREVIATIONS%

%LETTRES RONDES

%MACROS SANS ARGUMENTS

\def\virgp{\raise 2pt\hbox{,}}
\def\cdotpv{\raise 2pt\hbox{;}}

\def\C{\mathop{\mathbb C\kern 0pt}\nolimits}
\def\DD{\mathop{\mathbb D\kern 0pt}\nolimits}
\def\EE{\mathop{{\mathbb E \kern 0pt}}\nolimits}
\def\K{\mathop{\mathbb K\kern 0pt}\nolimits}
\def\N{\mathop{\mathbb N\kern 0pt}\nolimits}
\def\Q{\mathop{\mathbb Q\kern 0pt}\nolimits}
\def\R{\mathop{\mathbb R\kern 0pt}\nolimits}
\def\SS{\mathop{\mathbb S\kern 0pt}\nolimits}
\def\ZZ{\mathop{\mathbb Z\kern 0pt}\nolimits}
\def\TT{\mathop{\mathbb T\kern 0pt}\nolimits}
\def\P{\mathop{\mathbb P\kern 0pt}\nolimits}

%\def\cBB{{\mathcal B}^{-1+\frac2p,\frac12}_p(T)}
%\def\cBs{{\mathcal B}^{-1+\frac2p,\frac12}_p(T)}
%\def\cBT{{\mathcal B}^{-1+\frac2p,\frac12}_p(T)}

% MACRO EN ANGLAIS SANS ARGUMENTS

%\newcommand{}{ \hfill $ \blacksquare $ \vskip 3mm}

\newcommand{\beq}{\begin{equation}}
\newcommand{\eeq}{\end{equation}}
\newcommand{\ben}{\begin{eqnarray}}
\newcommand{\een}{\end{eqnarray}}
\newcommand{\beno}{\begin{eqnarray*}}
\newcommand{\eeno}{\end{eqnarray*}}
%%%%%%%%%%%%%%%%%

%%%%%%%%%%%%%%%%%%%%%%%%%%%%%%%%%%
\newtheorem{defi}{Definition}[section]
\newtheorem{thm}{Theorem}[section]
\newtheorem{lem}{Lemma}[section]
\newtheorem{rmk}{Remark}[section]

\newtheorem{prop}{Proposition}[section]
\renewcommand{\theequation}{\thesection.\arabic{equation}}

\begin{document}

\title[On the wave-breaking phenomena and global existence for $\mu$-CH equation]
{On the wave-breaking phenomena and global existence for the
generalized periodic Camassa-Holm equation}

\author[Guilong Gui]{Guilong Gui}
\address[Guilong Gui]{Department of Mathematics, Jiangsu University, Zhenjiang 212013, P. R.
China\\ and The Institute of Mathematical Sciences, The Chinese
University of Hong Kong, Shatin, Hong Kong} \email{glgui@amss.ac.cn}
\author[Yue Liu]{Yue Liu}
\address[Yue Liu]{Department of Mathematics, University of Texas, Arlington, TX
76019-0408, USA} \email{yliu@uta.edu}
\author[Min Zhu]{Min Zhu}
\address[Min Zhu]{Department of Mathematics, Nanjing Forestry
University, Nanjing 210037, P. R. China \\
and Department of Mathematics, Southeast University, Nanjing 210097,
P. R. China}\email{zhumin@njfu.edu.cn}

\maketitle
\begin{abstract}
Considered herein is the initial-value problem for the generalized periodic Camassa-Holm equation which is related
to the Camassa-Holm equation and the Hunter-Saxton
equation. Sufficient conditions guaranteeing the development of
breaking waves in finite time are demonstrated. On the other hand,
the existence of strong permanent waves is established with certain
 initial profiles depending on the linear dispersive parameter
in a range of the Sobolev spaces. Moreover, the admissible global weak
solution in the energy space is obtained.

\date{}

\end{abstract}
%\tableofcontents
%%%%%%%%%%%%%%

\noindent {\sl Keywords:} Generalized periodic Camassa-Holm
equation, Wave-breaking, Global existence

\vskip 0.2cm

\noindent {\sl AMS Subject Classification (2000):} 35B30, 35G25  \\

%%%%%%%%%%%%%%%%%%%%%%%%%%%%%%%%%%%%%%%%%%%%%%%%%%%%%%%%%%%%
\renewcommand{\theequation}{\thesection.\arabic{equation}}
\setcounter{equation}{0}
%%%%%%%%%%%%%%%%%%%%%%%%%%%%%%%%%%%%%%%%%%%%%%%%%%%%%%%%%%%%

\section{Introduction}
We study here the initial-value problem associated with the
generalized periodic Camassa-Holm ($\mu$-CH) equation \cite{khe}, namely,
\begin{equation}\label{e1.1}
\left\{
 \begin{array}{ll}
\begin{split}
&\mu(u_t)-u_{xxt}+2\mu(u)u_x+2\kappa u_x=2u_xu_{xx}+uu_{xxx}, \quad t > 0,  \quad x \in \mathbb{R}, \\
&u(0,x)=u_0(x), \qquad x \in \mathbb{R}, \\
&u(t,x+1)=u(t,x),\quad\quad\quad t \ge 0, \quad x\in\mathbb{R},
 \end{split}
\end{array} \right.
\end{equation}
where $u(t,x)$ is a time-dependent function on the unit circle
$\mathbb{S}=\mathbb{R}/\mathbb{Z}$ and
$\mu(u)=\int_{\mathbb{S}}u(t,x)dx$ denotes its mean, the parameter
$\kappa \in \mathbb{R}$. Obviously, if $\mu(u)=0$, which implies
that $ \mu(u_t) = 0, $  then this equation reduces to the
Hunter-Saxton (HS) equation \cite{hun1}, which is also a short wave
limit of the Camassa-Holm (CH) equation \cite{acdm, cam, conl2,
fuc}. Equivalently, the initial value problem \eqref{e1.1} can be
rewritten as the following mixed hyperbolic-elliptic type system.
\begin{equation}\label{e1.4}
 \begin{cases}
u_t+uu_x+\partial_x P=0, \qquad t > 0, \quad x \in \mathbb{R}, \\
(\mu-\partial_x^2)P=2\mu(u)u+\frac{1}{2}u^2_x+2\kappa \, u , \quad t > 0, \quad x\in\mathbb{R}\\
u(t,x+1)=u(t,x),\quad\quad\quad t \ge 0, \quad x\in\mathbb{R},\\
u(0,x)=u_0(x),  \qquad \qquad \qquad x \in \mathbb{R}.
\end{cases}
\end{equation}
With $m=(\mu-\partial_x^2)u$, the first equation in \eqref{e1.1} may
be read as
\begin{equation}\label{e1.4-1}
m_t+u m_x+2 m u_x+2\kappa u_x=0.
\end{equation}

It is known that the Camassa-Holm equation is one of the following
family of equations with the parameter $\lambda=2$
\begin{equation}\label{f1.1}
m_t + u m_x + \lambda u_x m+2\kappa u_x = 0,
\end{equation}
with $ m = Au $ and $ A = 1 - \partial_x^2$, the parameter $\kappa
\in \mathbb{R}$. The family of equations are believed to be
integrable \cite{cam, deg2} only for $\lambda=2$ and $\lambda=3$.

It is observed that the $\mu$-CH equation is the corresponding
$\mu$-version of the family given by \eqref{f1.1} with $ m = Au$, $
A = \mu - \partial_x^2$, and the parameter $\lambda=2$.

It is clear that the closest relatives of the $\mu$-CH equation are the Camassa-Holm equation
with $ A = 1 - \partial_x^2 $
\begin{equation*}\label{1.2}
u_t-u_{txx}+3uu_x+2\kappa u_x=2u_xu_{xx}+uu_{xxx},
\end{equation*}
and the equation with $ A = - \partial_x^2 $
 \begin{equation}\label{1.3}
-u_{txx}+2\kappa u_x=2u_xu_{xx}+uu_{xxx}.
\end{equation}
It is noted that when $\kappa=0$, \eqref{1.3} becomes the
Hunter-Saxton equation, while if $\kappa \neq 0$, \eqref{1.3} is a
short wave limit of the Camassa-Holm (CH) equation, which is an
equation in the Dym hierarchy and admits a new class of soliton
solutions(called umbilic solitons) \cite{acdm}.

Both of the CH equation and the HS equation have attracted a lot of
attention among the integrable systems and the PDE communities. The
Camassa-Holm equation was introduced  in \cite{cam} as a shallow
water approximation and has a bi-Hamiltonian structure \cite{fuc},
whose relevance for water waves was established in \cite{conl2}. The
Hunter-Saxton equation firstly appeared in \cite{hun1} as an
asymptotic equation for rotators in liquid crystals. Recently, it
was claimed in \cite{La} that the equation might be relevant to the
modeling of tsunami, also see the discussion in \cite{CoJo}.

The Camassa-Holm equation is a completely integrable system with a
bi-Hamiltonian structure and hence it possesses an infinite sequence
of conservation laws \cite{cam, fuc}, see \cite{con6} for the
periodic case. When $\kappa=0$, it admits soliton-like solutions
(called peakons) in both periodic and non-periodic setting
\cite{cam} and the multi-soliton or infinite-soliton solutions
consisting of a train of peaked solitary waves or `peakons'
\cite{cam, CaHoTi}. These peakons  are weak solutions in the
distributional sense and shown to be stable \cite {CaHoTi, con-m,
con-s, Di-Mo, Di-Mo2}. The Camassa-Holm equation describes geodesic
flows on the infinite dimensional group ${\mathcal D}^s(\mathbb{S})$
of orientation-preserving diffeomorphisms of the unit circle
$\mathbb{S}$ of Sobolev class $H^s$ and endowed with a
right-invariant metric by the $H^1$ inner product \cite{kou, mis2}.
The Hunter-Saxton equation also describes the geodesic flow on the
homogeneous space of the group $\mathcal{D}^s(\mathbb{S})$ modulo
the subgroup of rigid rotations $Rot(\mathbb{S})\simeq\mathbb{S}$
equipped with the $\dot{H}^1$ right-invariant metric \cite{len1} at
the identity
\begin{equation*}\langle
u,v\rangle_{\dot{H}^1}=\int_{\mathbb{S}}u_xv_x dx.
\end{equation*}
The Hunter-Saxton  equation possesses a bi-Hamiltonian structure and
is formally integrable \cite{hun2}.

Another remarkable property of the Camassa-Holm equation is the
presence of breaking waves (i.e. the solution remains bounded while
its slope becomes unbounded in finite time \cite{wh}) \cite {cam,
con1, con2, con4, con6,  mis1}. Wave breaking is one of the most
intriguing long-standing problems of water wave theory \cite{wh}. It
is worth pointing out that Bressan and Constantin proved that the
solutions to the Camassa-Holm equation can be uniquely continued
after wave-breaking as either global conservative or global
dissipative weak solution in \cite{BrCo1} and \cite{BrCo2},
respectively. It is noted that  Xin and Zhang obtained the existence
of a global-in-time weak solution to the Camassa-Holm equation in
the energy space \cite{xz}, where authors basically follow the
approach in \cite{zhangz-4} to study the viscous approximate
solutions to the Camassa-Holm equation.

The $\mu$-CH was introduced by
Khesin, Lenells and Misiolek \cite{khe} (also called $\mu$-HS
equation). Similar to the HS equation \cite{hun1}, the $\mu$-CH
equation describes the propagation of weakly nonlinear orientation
waves in a massive nematic liquid crystal  with external magnetic
filed and self-interaction. Here, the solution $ u(t, x) $ of the
$\mu$-CH equation represents the director field of a nematic liquid crystal,
$ x $ is a space variable in a reference frame moving with the
linearized wave velocity, and $ t $ is a slow time variable. Nematic
liquid crystals are fields consisting of long rigid molecules.  The
$\mu$-CH equation is also an Euler equation on $\mathcal{D}^s(\mathbb{S})$
(the set of circle diffeomorphism of the Sobolev class $ H^s$) and
it describes the geodesic flow on $\mathcal{D}^s(\mathbb{S})$  with
the right-invariant metric given at the identity by the inner
product \cite{khe}
\begin{equation*}\langle u, v\rangle=\mu(u)\mu(v)+\int_{\mathbb{S}}u_xv_x d x.
\end{equation*}
It was shown in \cite{khe} that the $\mu$-CH equation is formally
integrable and can be viewed as the  compatibility condition between
\begin{equation*}
\psi_{xx}=\xi (m+\kappa) \psi \quad \mbox{and} \quad \psi_t=\left
(\frac1{2\xi}-u \right )\psi_x+\frac1{2}u_x\psi,
\end{equation*}
where $\xi \in\mathbb{C} $ is a spectral parameter and $m=\mu(u)-u_{xx}$.

On the other hand, the $\mu$-CH equation admits bi-Hamiltonian structure and infinite
hierarchy of conservation laws. The first few conservation laws in the hierarchy are
\begin{equation*}
H_0=\int_{\Bbb S}m\ d x, \quad H_1=\frac1{2}\int_{\Bbb S} mu \ d x,
\quad H_2=\int_{\Bbb S}\left (\mu(u)u^2+\kappa \,
u^2+\frac1{2}uu_x^2 \right ) d x.
\end{equation*}
It is noted that the Hunter-Saxton equation does not have any
bounded traveling-wave solutions at all, while the $\mu$-CH equation
admits traveling waves that can be regarded as the appropriate
candidates for solitons. It is shown in \cite{khe, len2} that when
$\kappa=0$, the $\mu$-CH equation admits not only periodic
one-peakon solution $ u(t, x) = \varphi(x-ct) $ where
\begin{equation*} \varphi(x) = \frac{c}{26} (12 x^2 + 23)
\end{equation*}
for $ x \in [ - \frac{1}{2}, \frac{1}{2} ] $ and $ \varphi $ is
extended periodically to the real line, but also the multi-peakons
of the form
\begin{equation*}u=\sum^N_{i=1}p_i(t)g(x-q_i(t)),\end{equation*}
where $g(x)=\frac1{2}x(x-1)+\frac{13}{12}$ is the Green function of
the operator $(\mu -\partial_x^2)^{-1}$.
\begin{rmk}\label{rmk-a-opera-1}
The operator $A=\mu-\partial_x^2$ is elliptic and an isomorphism
between $H^s(\mathbb{S})$ and $H^{s-2}(\mathbb{S})$ since
\begin{equation*}\label{fourier-1-1}
(\widehat{Au})(k)=
\begin{cases}
(1+k^2)\,\widehat{u}(k), \quad \mbox{for} \quad k=0,\\
k^2 \,\widehat{u}(k), \quad \mbox{for} \quad k \in
\mathbb{Z}\setminus \{0\},
\end{cases}
\end{equation*}
where we denote the Fourier transform of a function $f$ in the torus
$\mathbb{S}$ by $\hat{f}(k)$ with the frequency $k \in \mathbb{Z}$.
In particular, if $u$ is constant, then $A\,u=u=A^{-1}\,u$.
\end{rmk}
According to the Green function of the operator $A^{-1}=(\mu
-\partial_x^2)^{-1}$(that is, $g(x)=\frac1{2}x(x-1)+\frac{13}{12}$),
the inverse $v=A^{-1}w$ can be given explicitly by
\begin{equation}\label{formula-1.5}
\begin{split}
&v(x)= \left( \frac{x^2}{2}-\frac{x}{2}+\frac{13}{12} \right )
\mu(w)+ \left ( x-\frac1{2}\right )\int^1_0\int^y_0w(s)  \,ds dy \\
&\qquad\qquad-\int^x_0\int^s_0w(r)\,
drds+\int^1_0\int^y_0\int^s_0w(r)\, dr ds d
y \\
&= \left (\frac{x^2}{2}-\frac{x}{2}+\frac{13}{12} \right )
\mu(w)+ \left (x-\frac1{2} \right )\int^1_0\int^y_0w(s)  \,ds
dy+\int^1_0\int^y_x\int^s_0w(r)\, dr ds d y .
\end{split}
\end{equation}
Since $A^{-1}$ and $\partial_x$ commute, the following identities
hold
\begin{equation*}\label{formula-1.6}
A^{-1}\partial_xw(x)=\left (x-\frac1{2}\right )\int^1_0w(x)  d
x-\int^x_0w(y)  dy+\int^1_0\int^x_0w(y)  d y d x,
\end{equation*}
and
\begin{equation}\label{formula-1.7}
A^{-1}\partial^2_xw(x)=-w(x)+\int^1_0 w(x)  dx.
\end{equation}
Thanks to \eqref{formula-1.5}, we can read explicitly the
formulation of $P$ in \eqref{e1.4} as
\begin{equation*}\label{formula-2.1}
\begin{split}
P=& \left (x^2-x+\frac{13}{6} \right )(\mu(u)+\kappa)\mu(u)\, +(2x-1
)(\mu(u)+\kappa) \int^1_0\int^y_0
u(s)  \,ds dy \\
&+2(\mu(u)+\kappa) \int^1_0\int^y_x\int^s_0 u(r)\, dr ds d y
+\frac{1}{2} \int^1_0\int^y_x\int^s_0(\partial_x u)^2(r)\, dr ds d
y\\
&+\frac{1}{2} \left (\frac{x^2}{2}-\frac{x}{2}+\frac{13}{12} \right
) \|\partial_x u\|_{L^2}^2+ \frac{1}{2} \left (x-\frac1{2} \right
)\int^1_0\int^y_0(\partial_x u)^2(s)  \,ds dy,
\end{split}
\end{equation*}
which leads to
\begin{equation}\label{formula-2.2}
\begin{split}
&\partial_xP= \left (\frac{1}{2}x-\frac1{4} \right )\left
(2\mu(u)(\mu(u)+\kappa)+\|\partial_x u\|_{L^2}^2\right)+ \frac{1}{2}
\int^1_0\int^y_0(\partial_x u)^2(s) \,ds dy
\\
&\quad+2(\mu(u)+\kappa) \left (\int^1_0\int^x_0u(y)  \,d y d
x-\int^x_0u(y)\, dy \right )-\frac{1}{2} \int^x_0(\partial_x
u)^2(y)\, dy.
\end{split}
\end{equation}
Note that $ H^{s} \hookrightarrow Lip$ for $s>\frac{3}{2}$. From the
theory of the transport equation point of view, one may define a
strong solution to \eqref{e1.4} as follows.
\begin{defi}\label{def-gss-1-1}
If $u\in C([0,T), H^{s}(\mathbb{S}))\cap C^{1}([0,T),
H^{s-1}(\mathbb{S}))$ with $s>\frac{3}{2}$ satisfies \eqref{e1.4},
then $u$ is called a strong solution to \eqref{e1.4}. If $u$ is a
strong solution on $[0,T)$ for every $T>0$, then it is called global
strong solution to \eqref{e1.4}.
\end{defi}
One of our goals in this paper is concerned with the existence of a
global weak solution in the energy space $H^1$, which is motivated by the
work in \cite{xz}.
\begin{defi}\label{def-gws-1-1}
 A continuous function $u=u(t, x)$ is said to be an admissible global weak
solution to the initial-value problem \eqref{e1.4} if

(i) $u(t, x) \in C(\mathbb{R}^{+} \times \mathbb{S}) \cap
L^{\infty}(\mathbb{R}^{+}, H^1(\mathbb{S}))$ and
\begin{equation}\label{energy-ine-1}
\mu(u)=\mu(u_0) \quad \mbox{and} \quad \|\partial_x u(t,
\cdot)\|_{L^{2}(\mathbb{S})} \leq  \|\partial_x
u_0(\cdot)\|_{L^{2}(\mathbb{S})} \quad \forall \quad t>0;
\end{equation}
(ii) $u(t, x)$ satisfies the equations in \eqref{e1.4} in the sense
of distributions and takes on the initial data pointwise.

\end{defi}

Our main results of the present paper are Theorems \ref{t4.3}-
\ref{t4.6} (wave-breaking), Theorems
\ref{thm-gss-1}-\ref{thm-gss-2}(Global strong solution), and Theorem
\ref{thm-main-1}(Global weak solution).

The remainder of the paper is organized as follows. In Section 2,
some {\it a priori} estimates and basic properties on its strong
solutions to the $\mu$-CH equation are recalled and derived, which
are constantly used in the whole paper.  In Section 3, the results
of blow-up to strong solutions are established  in details. It is
shown that the solutions of the $\mu$-CH equation can only have
singularities which correspond to wave breaking (Theorems
\ref{t4.3}-\ref{t4.6}).  Two sufficient conditions for the existence
of global strong solutions (Theorems
\ref{thm-gss-1}-\ref{thm-gss-2}) are specified in Section 4. The
existence of an admissible global weak solution in the energy space
$H^1$ (Theorem \ref{thm-main-1}) is demostrated in the last section,
Section 5.

 \vskip 0.1cm

\noindent {\it Notations}. Throughout this paper, we identity all
spaces of periodic functions with function spaces over the unit
circle $\mathbb{S}$ in $\mathbb{R}^2$, i. e.
$\mathbb{S}=\mathbb{R}/\mathbb{Z}$. Since all space of functions are
over $\mathbb{S}$, for simplicity, we drop $\mathbb{S}$ in our
notations of function spaces if there is no ambiguity. For a given
Banach space $Z$, we denote its norm by $\|\cdot\|_Z$.

\setcounter{equation}{0}
%%%%%%%%%%%%%%%%%%%%%%%%%%%%%%%%%%%%%%%%%%%%%%
%%%%%%%%%%%%%%%%%%%%%%%%%%%%%%%%%%%%%%%%%%
\section{Preliminaries}

In the following, we establish some {\it a priori} estimates for the
$\mu$-CH equation. Recall that the first two conserved quantities of
the $\mu$-CH equation are
\begin{equation*}H_0=\int_{\Bbb S}m\ d
x=\int_{\Bbb S}\left (\mu(u)-u_{xx}\right )  d x=\mu(u(t)),
\end{equation*}
and \begin{equation*}H_1=\frac1{2}\int_{\Bbb S}mu \ d
x=\frac1{2}\mu^2(u(t))+\frac1{2}\int_{\Bbb S}u^2_x(t,x)  d
x.\end{equation*}
It is easy to see that  $\mu(u(t))$ and $
\int_{\Bbb S}u^2_x(t,x) d x $ are conserved in time \cite{khe}. Thus
\begin{equation*}\label{2.0}
\mu(u_t)=0.
 \end{equation*}

For the sake of convenience, let
\begin{equation}\label{2.1}
\mu_0=\mu(u_0) =\mu(u(t))  =\int_{\mathbb{S}} u(t, x)  d x
 \end{equation}
 and
\begin{equation}\label{2.2}
\mu_1=\left(\int_{\Bbb S}u^2_x(0,x)  d x
\right)^{\frac1{2}}=\left(\int_{\Bbb S}u^2_x(t,x)  d
x\right)^{\frac1{2}}.
\end{equation}
Then $\mu_0$ and $\mu_1$ are constants and independent of time $t$.
\begin{lem}\label{l2.1}\cite{con2}
If $f \in H^3(\mathbb{S}) $ is such that $\int_{\mathbb{S}}f(x)\
dx=a_0/2,$ then for every $\varepsilon>0$, we have
$$\max\limits_{x \in \mathbb{S}}f^2(x)\le\dfrac{\varepsilon+2}{24}\int_{\mathbb{S}}f^2_x(x)  d x
+\dfrac{\varepsilon+2}{4\varepsilon}a^2_0.$$
\end{lem}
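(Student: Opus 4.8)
The plan is to split off the mean of $f$ and estimate the zero–mean remainder by a sharp periodic Sobolev embedding, then recombine the pieces with Young's inequality to generate the free parameter $\varepsilon$.

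First I would write $f = \frac{a_0}{2} + h$, where $h = f - \frac{a_0}{2}$ satisfies $\int_{\mathbb{S}} h\,dx = 0$ and $h_x = f_x$. Since $f$ is continuous on the compact circle $\mathbb{S}$, $f^2$ attains its maximum, and expanding the square yields the pointwise bound
$$\max_{x\in\mathbb{S}} f^2(x) \le \frac{a_0^2}{4} + |a_0|\,\max_{x\in\mathbb{S}}|h(x)| + \max_{x\in\mathbb{S}} h^2(x),$$
using that the maximum of a sum is at most the sum of the maxima.

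The heart of the matter is a sharp estimate of $\max|h|$ in terms of $\|f_x\|_{L^2}$. Expanding $h$ in its Fourier series $h(x) = \sum_{k\ne 0} c_k e^{2\pi i k x}$ — the $k=0$ mode being absent precisely because $h$ has zero mean — one has $\|h_x\|_{L^2}^2 = 4\pi^2 \sum_{k\ne 0} k^2 |c_k|^2$, so that by Cauchy–Schwarz
$$\max_{x}|h(x)| \le \sum_{k\ne0}|c_k| \le \Big(\sum_{k\ne0}\frac{1}{4\pi^2 k^2}\Big)^{1/2}\Big(\sum_{k\ne0}4\pi^2 k^2|c_k|^2\Big)^{1/2} = \frac{1}{2\sqrt{3}}\,\|f_x\|_{L^2},$$
where $\sum_{k\ne0}\frac{1}{4\pi^2 k^2} = \frac{1}{2\pi^2}\sum_{k\ge1}\frac1{k^2} = \frac{1}{2\pi^2}\cdot\frac{\pi^2}{6} = \frac{1}{12}$. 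In particular $\max h^2 \le \frac{1}{12}\|f_x\|_{L^2}^2$, and $\max|h|\le \frac{1}{2\sqrt3}\|f_x\|_{L^2}$.

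Finally I would insert these two bounds into the displayed inequality and absorb the cross term by Young's inequality $AB\le \frac{\delta}{2}A^2 + \frac{1}{2\delta}B^2$ with $A=\|f_x\|_{L^2}$, $B=|a_0|$: for the choice $\delta = \varepsilon/(2\sqrt3)$ one checks that $\frac{|a_0|}{2\sqrt3}\|f_x\|_{L^2} \le \frac{\varepsilon}{24}\|f_x\|_{L^2}^2 + \frac{1}{2\varepsilon}a_0^2$. Substituting, the $\|f_x\|_{L^2}^2$ coefficients combine to $\frac{1}{12}+\frac{\varepsilon}{24} = \frac{\varepsilon+2}{24}$ and the $a_0^2$ coefficients to $\frac14 + \frac{1}{2\varepsilon} = \frac{\varepsilon+2}{4\varepsilon}$, which is exactly the asserted inequality. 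I expect the only genuine difficulty to lie in pinning down the sharp constant $\frac{1}{12}$; the Fourier computation with $\sum k^{-2}=\pi^2/6$ makes this transparent, after which everything is routine bookkeeping. Note that the $H^3$ hypothesis is far more than necessary — the estimate already holds for $f\in H^1(\mathbb{S})$.
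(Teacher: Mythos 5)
Your proof is correct: the decomposition $f=\frac{a_0}{2}+h$, the zero-mean bound $\max_{\mathbb{S}}|h|\le\frac{1}{2\sqrt3}\|f_x\|_{L^2}$ via Fourier series and Cauchy--Schwarz, and the Young-inequality bookkeeping with $\delta=\varepsilon/(2\sqrt3)$ all check out, and the constants recombine exactly to $\frac{\varepsilon+2}{24}$ and $\frac{\varepsilon+2}{4\varepsilon}$. Note, however, that the paper itself does not prove this lemma; it is quoted from Constantin \cite{con2}, so there is no in-paper argument to match against. The classical proof there runs in physical space rather than on the Fourier side: after translating so that the maximum of $f^2$ sits at $x=0$, one writes $f(0)=\frac{a_0}{2}-\int_0^1(1-y)f_x(y)\,dy$, uses periodicity ($\int_0^1 f_x\,dy=0$) to replace the kernel $1-y$ by $\frac12-y$, and then Cauchy--Schwarz with $\int_0^1(\frac12-y)^2dy=\frac1{12}$ produces the same constant $\frac{1}{12}$ that you extract from $\sum_{k\ge1}k^{-2}=\pi^2/6$; the final Young step is identical. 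The two routes are thus parallel, with the same two ingredients (the sharp zero-mean embedding constant $\frac1{12}$ and the $\varepsilon$-splitting); yours has the advantage of exhibiting why $\frac1{12}$ is sharp (equality forces $|c_k|\propto k^{-2}$ with aligned phases, i.e.\ the periodic Bernoulli polynomial, which is exactly the Green's function profile appearing elsewhere in the paper), while the physical-space version avoids any discussion of convergence of the Fourier series. Your closing remark that $H^3$ is unnecessary and $f\in H^1(\mathbb{S})$ suffices is also consistent with the paper's Remark~\ref{rmk-2.1}, which obtains the $H^1$ case by density, and your intermediate zero-mean inequality is precisely \eqref{e2.1}.
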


\begin{rmk}\label{rmk-2.1}  Since $H^3$ is dense in $H^1,$
Lemma \ref{l2.1} also holds for every $f \in H^1(\mathbb{S})$.
Moreover, if $\int_\mathbb{S}f(x)\ dx=0$, from the
 deduction of this lemma we arrive at the following inequality
\begin{equation}\label{e2.1}
\max\limits_{x\in
\mathbb{S}}f^2(x)\le\dfrac1{12}\int_{\mathbb{S}}f^2_x(x) dx,\quad
x\in\mathbb{S}, \quad f \in H^1(\mathbb{S}).
\end{equation}
\end{rmk}
\begin{lem}\label{l2.2}\cite{but}
For every $\  f(x)\in H^1(a,b)$ periodic and with zero average, i.e.
such that $\int^b_af(x)\ dx=0$, we have
\begin{equation*}\int^b_af^2(x)\ d x\le\left ( \dfrac{b-a}{2\pi}\right)^2\int^b_a|f'(x)|^2  dx,
\end{equation*}
and equality holds if and only if
\begin{equation*}f(x)=A\cos\left(\dfrac{2\pi x}{b-a}\right)+B\sin\left(\dfrac{2\pi x}{b-a}\right).
\end{equation*}
\end{lem}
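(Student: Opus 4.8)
The plan is to prove this classical Wirtinger inequality by expanding $f$ in a Fourier series adapted to the period $L := b-a$. Since $f \in H^1(a,b)$ is periodic with $\int_a^b f\,dx = 0$, it admits an expansion
$$f(x) = \sum_{n \in \ZZ \setminus \{0\}} c_n\, e^{2\pi i n x/L},$$
where the constant ($n=0$) mode is absent precisely because of the zero-average hypothesis. First I would record the two Parseval identities: $\int_a^b |f|^2\,dx = L \sum_{n\ne 0} |c_n|^2$, and—differentiating term by term, which is justified since $f' \in L^2$ so that $f'(x) = \sum_{n\ne 0} (2\pi i n/L)\, c_n\, e^{2\pi i n x/L}$ in $L^2$—the identity $\int_a^b |f'|^2\,dx = (2\pi/L)^2 L \sum_{n\ne 0} n^2 |c_n|^2$.

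The inequality is then immediate from the elementary bound $n^2 \ge 1$ for every nonzero integer $n$: indeed
$$\int_a^b |f'(x)|^2\,dx = \left(\frac{2\pi}{L}\right)^2 L \sum_{n\ne 0} n^2 |c_n|^2 \ge \left(\frac{2\pi}{L}\right)^2 L \sum_{n\ne 0} |c_n|^2 = \left(\frac{2\pi}{L}\right)^2 \int_a^b |f(x)|^2\,dx,$$
and rearranging yields the claimed bound with constant $(L/2\pi)^2 = \big((b-a)/2\pi\big)^2$.

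For the equality case, I would inspect the gap in the inequality above, namely the quantity $\sum_{n\ne 0} (n^2 - 1)|c_n|^2$, which is a sum of nonnegative terms and therefore vanishes if and only if $c_n = 0$ for all $|n| \ge 2$. Hence equality forces $f(x) = c_1 e^{2\pi i x/L} + c_{-1} e^{-2\pi i x/L}$; writing this in real form via Euler's formula produces exactly $f(x) = A\cos(2\pi x/L) + B\sin(2\pi x/L)$, and conversely any such $f$ saturates the bound since it is supported on the $n = \pm 1$ modes alone.

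The only genuinely delicate point is the rigorous justification of the term-by-term differentiation and of the Parseval identity for $f'$; this is handled by the characterization of the periodic Sobolev space $H^1$ as those $f$ whose Fourier coefficients satisfy $\sum_n (1+n^2)|c_n|^2 < \infty$, equivalently $f' \in L^2$ with Fourier coefficients $(2\pi i n/L)\,c_n$. Everything else is bookkeeping with Parseval's theorem. Alternatively, one could first establish the estimate for smooth periodic $f$ and then extend it to all of $H^1(a,b)$ by density, since both sides are continuous in the $H^1$ norm.
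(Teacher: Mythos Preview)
Your Fourier-series argument is correct and is the standard proof of Wirtinger's inequality. Note, however, that the paper does not supply its own proof of this lemma: it is simply quoted from the reference \cite{but} (Buttazzo, Giaquinta, Hildebrandt) and used as a black box, so there is no in-paper argument to compare against. Your proof is exactly the classical one found in that reference and elsewhere.
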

Note that
\begin{equation*}\int_{\Bbb S}(u(t,x)-\mu_0)  d x=\mu_0-\mu_0=0.
\end{equation*}
By Lemma \ref{l2.1}, we find that
\begin{equation}\label{max-1-1}
\max\limits_{x\in \mathbb{S}}\;[u(t,x)-\mu_0]^2
\le\dfrac1{12}\int_{\mathbb{S}}u^2_x(t,x)  dx=\dfrac1{12}
\int_{\mathbb{S}}u^2_x(0,x)\ d x=\dfrac1{12}\mu^2_1.
\end{equation}
From the above estimate, we find that the amplitude of the wave
remains bounded in any time, that is,
\begin{equation*}\|u(t,\cdot)\|_{L^{\infty}}-|\mu_0|\le\|u(t,\cdot)-\mu_0\|_{L^{\infty}}\le\frac{\sqrt{3}}{6}\mu_1,
\end{equation*}
and so
\begin{equation}\label{e2.2}
\|u(t,\cdot)\|_{L^{\infty}}\le|\mu_0|+\frac{\sqrt{3}}{6}\mu_1.
\end{equation}
While thanks to Lemma \ref{l2.2}, we have
\begin{equation}\label{max-1-2}
\int_{\mathbb{S}}[u(t,x)-\mu_0]^2\, dx
\le\dfrac1{4\pi^2}\int_{\mathbb{S}}u^2_x(t,x) dx=\dfrac1{4\pi^2}
\int_{\mathbb{S}}u^2_x(0,x)\ d x=\dfrac1{4\pi^2}\mu^2_1.
\end{equation}
Therefore, one gets from \eqref{max-1-2} that
\begin{equation}\label{e2.3}
\begin{split}
\|u(t,x)\|^2_{L^2}&=\int_{\Bbb S}u^2(t,x)\, d x=\int_{\Bbb
S}[(u-\mu_0)^2+2\mu_0 u-\mu_0^2](t,x)\, d
x\\
&\leq\frac{1}{4\pi^2}\mu_1^2+\mu_0^2.
\end{split}
\end{equation}
It then follows that
\begin{equation*}\label{e2.4}
\|u(t,\cdot)\|_{H^1}^2=\int_{\Bbb S}u^2(t,x)\, d x +\int_{\Bbb
S}u^2_x(t,x)\, d x\leq\frac{1+4\pi^2}{4\pi^2}\mu_1^2+\mu_0^2.
\end{equation*}

Let us first state the following local well-posedness result of
\eqref{e1.4}, which was obtained in \cite{khe} and \cite{len2}
(up to a slight modification, the proof is omitted).
\begin{prop}\label{p2.1}
Let $u_0\in H^s(\mathbb{S})$, $s>3/2$. Then there exist a maximal
$T=T(u_{0})>0$  and a unique strong solution $u$ to \eqref{e1.4}
such that
\begin{equation*}
u=u(\cdot,u_{0})\in C([0,T), H^{s}(\mathbb{S}))\cap C^{1}([0,T),
H^{s-1}(\mathbb{S})).
\end{equation*}
Moreover, the solution depends continuously on the initial data,
i.e. the mapping $u_{0} \mapsto u(\cdot,u_{0}): H^{s}(\mathbb{S})
\rightarrow C([0,T),  H^{s}(\mathbb{S}))\cap C^{1}([0,T),
H^{s-1}(\mathbb{S}))$ is continuous.
\end{prop}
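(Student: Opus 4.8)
The plan is to recast~\eqref{e1.4} as an abstract quasilinear evolution equation and to apply Kato's theory of quasilinear hyperbolic equations. Using $A=\mu-\partial_x^2$ and the explicit inverse~\eqref{formula-1.5}, the problem~\eqref{e1.4} is equivalent to
\begin{equation*}
u_t+u\,u_x=f(u),\qquad u(0)=u_0,\qquad
f(u):=-A^{-1}\partial_x\!\Big(2\mu(u)\,u+\tfrac12 u_x^2+2\kappa\,u\Big).
\end{equation*}
Setting $\cA(u):=u\,\partial_x$, this reads $u_t+\cA(u)u=f(u)$. I would take the base space $X=H^{s-1}(\mathbb{S})$, the regularity space $Y=H^s(\mathbb{S})$, and the isomorphism $\Lambda:=(1-\partial_x^2)^{1/2}\colon Y\to X$, and then verify Kato's three structural hypotheses for the family $\{\cA(u)\}$ and the map $f$; the desired conclusion $u\in C([0,T),H^s)\cap C^1([0,T),H^{s-1})$ corresponds exactly to $u\in C([0,T),Y)\cap C^1([0,T),X)$.

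First, $\cA(u)=u\,\partial_x$ should be quasi-$m$-accretive on $X=H^{s-1}$. Conjugating by $\Lambda^{s-1}$ and integrating by parts on $\mathbb{S}$, one finds $(\cA(u)v,v)_{H^{s-1}}=-\tfrac12(u_x\Lambda^{s-1}v,\Lambda^{s-1}v)_{L^2}+([\Lambda^{s-1},u\,\partial_x]v,\Lambda^{s-1}v)_{L^2}$, and the Kato--Ponce commutator bound controls the second term by $\|u\|_{H^s}\|v\|_{H^{s-1}}^2$; since $H^s\hookrightarrow\mathrm{Lip}$ for $s>3/2$ keeps $\|u_x\|_{L^\infty}$ finite, this gives quasi-accretivity with a constant depending on $\|u\|_{H^s}$, and $m$-accretivity follows from the range condition for the Lipschitz transport operator. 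Second, the conjugation identity $\Lambda\,\cA(u)\,\Lambda^{-1}=\cA(u)+B(u)$ with $B(u):=[\Lambda,u\,\partial_x]\Lambda^{-1}$ must be bounded on $H^{s-1}$, which again reduces to a commutator estimate yielding $\|B(u)\|_{\mathcal{L}(H^{s-1})}\lesssim\|u\|_{H^s}$ together with Lipschitz dependence $u\mapsto B(u)$. Third, the nonlinearity $f$ must map $H^s$ into $H^s$ and be locally Lipschitz; this is where Remark~\ref{rmk-a-opera-1} is decisive, for since $A$ is an isomorphism $H^s\to H^{s-2}$ the operator $A^{-1}\partial_x$ gains one net derivative and maps $H^{s-1}$ into $H^s$, while the Banach-algebra property of $H^{s-1}(\mathbb{S})$ for $s>3/2$ places $2\mu(u)u+\tfrac12u_x^2+2\kappa u$ in $H^{s-1}$, so that $f(u)\in H^s$; the term $\mu(u)u$ and the constant $\kappa u$ are handled using that $A^{-1}$ fixes constants.

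With these hypotheses verified, Kato's theorem yields a maximal existence time $T=T(u_0)>0$ and a unique solution $u\in C([0,T),H^s)\cap C^1([0,T),H^{s-1})$ depending continuously on $u_0$, exactly as asserted. The genuine obstacle is the pair of commutator estimates underlying the first two hypotheses: isolating $[\Lambda^{s-1},u\,\partial_x]$ and $[\Lambda,u\,\partial_x]$ uniformly, and with Lipschitz dependence on $u$, is where the top-order derivative interactions must cancel, whereas the smoothing and algebra estimates of the third step are comparatively routine. An equivalent route, in keeping with the transport-equation viewpoint adopted before Definition~\ref{def-gss-1-1}, would be Danchin's Besov-space scheme: one estimates the transport equation $u_t+u\,u_x=f(u)$ through the Lipschitz-drift transport inequalities and closes a contraction (or compactness) argument on a sequence of approximate solutions; this bypasses the semigroup machinery at the cost of the same commutator-type bounds.
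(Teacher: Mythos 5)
Your plan is a correct outline, but it is not the route the paper takes: the paper gives no proof of Proposition \ref{p2.1} at all, stating that the result ``was obtained in \cite{khe} and \cite{len2} (up to a slight modification, the proof is omitted).'' In those references local well-posedness is established by the geometric (Ebin--Marsden/Misiolek) method: the equation is recognized as the geodesic equation of a right-invariant metric on $\mathcal{D}^s(\mathbb{S})$, one passes to Lagrangian coordinates where the geodesic spray is a smooth vector field on the Banach manifold $T\mathcal{D}^s(\mathbb{S})$, and existence, uniqueness and regular dependence follow from ODE theory in Banach spaces before returning to Eulerian variables. Your Kato quasilinear-semigroup scheme is the standard alternative, and it is in fact the method the paper itself points to in Remark \ref{rmk-index-1} (via \cite{kat1}, \cite{G-L-iumj}, \cite{GR}) for the companion claim that $T$ is independent of $s$. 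The trade-off: your route works directly in Eulerian variables, treats the $\kappa$-term on the same footing as the rest of $f$ (no geodesic structure needed, which is one reason the paper must say ``up to a slight modification''), and yields continuous dependence in $H^s$ at once; the geometric route buys smoothness of the flow map in Lagrangian coordinates and avoids derivative loss for the flow, but requires the Euler--Arnold structure.

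One step in your sketch would fail as literally written. For the accretivity of $\cA(u)=u\,\partial_x$ on $H^{s-1}$ you need $\|[\Lambda^{s-1},u\,\partial_x]v\|_{L^2}\lesssim \|u\|_{H^s}\|v\|_{H^{s-1}}$, but the Kato--Ponce inequality \cite{kat} gives $\|[\Lambda^{r},f]g\|_{L^2}\lesssim \|\partial_x f\|_{L^\infty}\|g\|_{H^{r-1}}+\|f\|_{H^r}\|g\|_{L^\infty}$; applied with $r=s-1$, $f=u$, $g=\partial_x v$, the second term involves $\|\partial_x v\|_{L^\infty}$, which is \emph{not} controlled by $\|v\|_{H^{s-1}}$ in the range $3/2<s\leq 5/2$ covered by the proposition. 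The repair is the sharper (still standard, but not literally Kato--Ponce) commutator estimate $\|[\Lambda^{r},f]g\|_{L^2}\lesssim \|f\|_{H^{r+1}}\|g\|_{H^{r-1}}$, valid in one space dimension for $r>1/2$ and provable by paraproduct decomposition; the same substitution is needed for $B(u)=[\Lambda,u\,\partial_x]\Lambda^{-1}$. With that estimate in place your three hypotheses are verifiable, and the remaining step --- that $f(u)=-A^{-1}\partial_x\bigl(2\mu(u)u+\tfrac12u_x^2+2\kappa u\bigr)$ maps $H^s$ to $H^s$ locally Lipschitzly, via the algebra property of $H^{s-1}$ and the two-derivative gain of $A^{-1}$ from Remark \ref{rmk-a-opera-1} --- is fine as you state it.
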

\begin{rmk}\label{rmk-index-1}
The maximal $T$ in Proposition \ref{p2.1} can be chosen independent
of $s$ in the following sense. If $u = u(\cdot, u_0) \in C\left([0,
T), H^s\right)\cap C^1\left([0, T),  H^{s-1}\right)$ to \eqref{e1.4}
and $u_0 \in H^{s'}$ for some $s' \neq s, \, s'
> \frac{3}{2}, $ then $u \in  C\left([0, T),  H^{s'}\right)\cap
C^1\left([0, T),  H^{s'-1}\right)$ and with the same $T$. In
particular, if $u_0 \in H^{\infty} = \bigcap_{s \geq 0}H^s,$ then $u
\in C\left([0, T),  H^{\infty}\right)$ (see \cite {flq} for the
details, or \cite{G-L-iumj, GR} for an adaptation of the Kato method
\cite{kat1} to the proof of this statement for the (generalized)
Camassa-Holm equation).
\end{rmk}

Let us now consider the following differential equation
\begin{equation}\label{diff-1-0}
\begin{cases}q_{t}=u(t,q),\quad &t\in[0,T),
\\ q(0,x) = x,&x\in \mathbb{R}.
\end{cases}
\end{equation}
Applying classical results in the theory of ordinary differential
equations, we have the following properties of  $ q $ which are
crucial in the proof of global existence.

\begin{lem}\label{lem-diff-1}
Let $u_{0}\in H^{s}(\mathbb{S}), \; s > \frac{3}{2}$, and let $T>0$
be the maximal existence time of the corresponding strong solution
$u$ to \eqref{e1.4}. Then  Eq.\eqref{diff-1-0} has a unique
solution $q \in C^{1}([0,T)\times \mathbb{R},\mathbb{R})$ such that
the map $q(t,\cdot)$ is an increasing diffeomorphism of $\mathbb{R}$
with
\begin{equation*}\label{diff-1-1}
q_{x}(t,x)=\exp \left ( \int_{0}^{t}u_{x}(s,q(s,x))ds \right ) >
0,\, \; \forall (t,x)\in [0,T)\times \mathbb{R}.
\end{equation*}
 Furthermore, setting
$m =\mu(u)-u_{xx}$, we have
\begin{equation*}\label{diff-1-2}
\left(m(t,q(t,x))+\kappa\right)q_{x}^{2}(t,x) =m_{0}(x)+\kappa,\quad
\forall (t,x)\in [0,T)\times\mathbb{R}.
\end{equation*}
\end{lem}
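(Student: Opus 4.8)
The plan is to treat the three assertions in turn, handling the ODE theory first and reserving the conservation identity---where the regularity of $m$ is delicate---for last.

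First I would establish existence, uniqueness and $C^1$ regularity of the flow $q$. Since $s>\frac32$ we have the embedding $H^s(\mathbb{S})\hookrightarrow C^1(\mathbb{S})$, so $u\in C([0,T),H^s)$ guarantees that $(t,x)\mapsto u(t,x)$ is continuous in $t$ and, for each fixed $t$, belongs to $C^1$ with $\|u(t,\cdot)\|_{C^1}$ bounded uniformly on every compact subinterval $[0,T']\subset[0,T)$. In particular $u(t,\cdot)$ is Lipschitz in $x$ uniformly in $t$ on $[0,T']$, so the Cauchy--Lipschitz theorem yields a unique solution $q(\cdot,x)$ of \eqref{diff-1-0}; since $u$ is bounded by \eqref{e2.2}, the solution does not escape in finite time and is defined on all of $[0,T)$. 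The joint regularity $q\in C^1([0,T)\times\mathbb{R})$ and the differentiability of $x\mapsto q(t,x)$ then follow from the classical smooth-dependence-on-data theorems for ODEs.

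Next I would derive the formula for $q_x$. Differentiating \eqref{diff-1-0} in $x$ gives the linear variational equation $\partial_t q_x(t,x)=u_x(t,q(t,x))\,q_x(t,x)$ with $q_x(0,x)=1$; integrating this scalar linear ODE produces the claimed exponential expression, which is strictly positive for all $(t,x)$. Hence $q(t,\cdot)$ is strictly increasing, and the spatial periodicity $u(t,x+1)=u(t,x)$ forces $q(t,x+1)=q(t,x)+1$, so $q(t,\cdot)$ is an orientation-preserving diffeomorphism of $\mathbb{R}$.

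Finally, for the identity $(m(t,q)+\kappa)q_x^2=m_0+\kappa$, I would set $M(t):=m(t,q(t,x))$ and compute, using the chain rule together with the transport form \eqref{e1.4-1} of the equation (and $\mu(u_t)=0$), that $\frac{d}{dt}M=(m_t+um_x)(t,q)=-2\big(m(t,q)+\kappa\big)u_x(t,q)$. Combining this with $\partial_t q_x=u_x(t,q)\,q_x$ gives
\[
\frac{d}{dt}\Big[\big(M+\kappa\big)q_x^2\Big]
=-2\big(M+\kappa\big)u_x(t,q)\,q_x^2+2\big(M+\kappa\big)u_x(t,q)\,q_x^2=0,
\]
so $(M+\kappa)q_x^2$ is constant in $t$ and equals its value $m_0(x)+\kappa$ at $t=0$, where $q_x=1$. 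The main obstacle is that this computation presupposes that $m=\mu(u)-u_{xx}$ is a genuine, pointwise-defined, differentiable function, whereas for $s$ close to $\frac32$ one only has $m\in H^{s-2}$, which may be a mere distribution, so neither the chain rule nor the composition $m(t,q(t,x))$ is directly justified. To circumvent this I would first carry out the computation above for smooth data $u_0\in H^\infty$, where $u$ and $m$ are smooth by Remark \ref{rmk-index-1}, and then approximate a general $u_0\in H^s$ by $u_0^n\in H^\infty$ with $u_0^n\to u_0$ in $H^s$. By the continuous dependence in Proposition \ref{p2.1} and the $s$-independence of the existence time in Remark \ref{rmk-index-1}, the corresponding solutions $u^n$, flows $q^n$ and $q_x^n$ converge on each $[0,T']$ to $u$, $q$, $q_x$ in the appropriate topologies, allowing passage to the limit in $(m^n(t,q^n)+\kappa)(q_x^n)^2=m_0^n+\kappa$. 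Verifying that these convergences are strong enough to pass to the limit in the possibly only distributional factor $m^n(t,q^n)$ is the delicate technical point of the argument.
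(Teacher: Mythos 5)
Your proposal is correct and follows essentially the same route as the paper's proof: ODE well-posedness from the embedding $H^s(\mathbb{S})\hookrightarrow C^1(\mathbb{S})$, the variational equation $\frac{d}{dt}q_x=u_x(t,q)q_x$ integrated to the exponential formula, and the chain-rule computation combining this with the transport form \eqref{e1.4-1} to show $\frac{d}{dt}\bigl[(m(t,q)+\kappa)q_x^2\bigr]=0$. The only divergence is your final regularity caveat: the paper performs the formal computation without comment (its actual use of the identity, in Theorem \ref{thm-gss-1}, has $m_0\in H^1$, i.e.\ effectively $s=3$ via Remark \ref{rmk-index-1}), so your mollification scheme for $s$ close to $\tfrac32$ is a legitimate extra safeguard rather than something the paper supplies --- though as you note, the passage to the limit in $m^n(t,q^n)$ is left unverified in your sketch.
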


\begin{proof}
Since $u \in C^1 \left([0, T), H^{s-1}(\mathbb{S})\right)$ and
$H^{s}(\mathbb{S})\hookrightarrow C^1(\mathbb{{S}}),$ we see that
both functions $u(t, x)$ and $u_x(t, x)$ are bounded, Lipschitz in
the space variable $x$, and of class $C^1$ in time. Therefore, for
fixed $x \in \mathbb{R}$, \eqref{diff-1-0} is an ordinary
differential equation. Then well-known classical results in the
theory of ordinary differential equation yield that \eqref{diff-1-0}
has a unique solution $q(t, x) \in C^1 \left([0, T) \times
\mathbb{R}, \mathbb{R}\right).$

Differentiation of \eqref{diff-1-0} with respect to $x$ yields
\begin{equation}\label{diff-1-3}
\begin{cases}\frac{d}{dt}q_{x}=u_x(t,q)q_x,\quad t\in[0,T),
\\ q_x(0,x) = 1, \quad  x\in \mathbb{R}.
\end{cases}
\end{equation}
The solution to \eqref{diff-1-3} is given by
\begin{equation}\label{diff-1-4}
q_{x}(t,x)=\exp \left ( \int_{0}^{t}u_{x}(s,q(s,x))ds \right ),\, \;
(t,x)\in [0,T)\times \mathbb{R}.
\end{equation}
For every $T' < T,$ it follows from the Sobolev imbedding theorem
that

\begin{equation*}
\sup_{(s,x)\in [0,T')\times \mathbb{R}}|u_x(s, x)|< \infty.
\end{equation*} We infer from \eqref{diff-1-4}
that there exists a constant $K > 0$ such that $q_x(t, x) \geq
e^{-Kt}, \ (t,x)\in [0,T)\times \mathbb{R},$ which implies that the
map $q(t,\cdot)$ is an increasing diffeomorphism of $\mathbb{R}$
with
\begin{equation*}
q_{x}(t,x)=\exp \left ( \int_{0}^{t}u_{x}(s,q(s,x))ds \right ) > 0,\, \; \forall (t,x)\in
[0,T)\times \mathbb{R}.
\end{equation*}
On the other hand, combining \eqref{diff-1-3} with \eqref{e1.4-1},
we have
\begin{equation*}\label{diff-1-5}
\begin{split}
\frac{d}{dt}\left((m(t, q(t, x))+\kappa)q_x^{2}(t,
x)\right)&=\left(m_t+m_x
q_x \right) q_x^{2}(t, x)+2 (m+\kappa) q_x q_{xt}\\
&=q_x^{2}(m_t+m_x u+2 u_x m+2\kappa u_x)=0.
\end{split}
\end{equation*}
So,
\begin{equation*}\label{diff-1-6} \left(m(t,q(t,x))+\kappa\right)q_{x}^{2}(t,x) =m_{0}(x)+\kappa,
\quad \forall (t,x)\in [0,T)\times\mathbb{R}.
\end{equation*}
This completes the proof of Lemma \ref{lem-diff-1}.
\end{proof}

\begin{rmk}\label{rmk-diff-1}
Lemma \ref{lem-diff-1} shows that, if $m_0+\kappa = \mu(u_0)
-u_{0xx} +\kappa$ does not change sign, then $m\left( t \right)
+\kappa \,(\forall \ t)$ will not change sign, as long as $m\left( t
\right)$ exists.
\end{rmk}

\begin{rmk}\label{rmk-diff-2}
Since $q(t,\cdot):\mathbb{R}\rightarrow\mathbb{R}$ is a
diffeomorphism of the line for every $t\in[0,T)$, the
$L^{\infty}$-norm of any function $v(t,\cdot)\in L^{\infty}, \;
t\in[0,T)$ is preserved under the family of diffeomorphisms
$q(t,\cdot)$ with $t\in[0,T)$, that is,
\begin{equation*}
\|v(t,\cdot)\|_{L^{\infty}}=\|v(t,q(t,\cdot))\|_{L^{\infty}},\quad
t\in[0,T).
\end{equation*}
\end{rmk}

In \cite{khe}  and \cite{len2}, the authors also showed  that the
$\mu$-CH equation admits global (in time) solutions and  blow-up
solutions. It is our purpose here to derive  the precise wave-breaking scenarios
and determine the initial conditions guaranteeing the blow-up of strong solutions
to the initial-value problem  \eqref{e1.1}, which will significantly
improve the results in \cite{khe} and \cite{len2}.

As longs as the solution $u$ to \eqref{e1.4} is defined,
we set
 \begin{equation}\label{max-min-1}m_1(t) = \min_{x\in \mathbb{S}} [u_x(t, x)], \quad \mbox{and} \quad m_2(t) =
\max_{x\in \mathbb{S}} [u_x(t, x)]
 \end{equation}
 and further $x_1(t) \in
\mathbb{S}$ and $x_2(t)\in  \mathbb{S}$ are points where these
extrema are attained, i.e., $m_i(t) = u_x(t, x_i(t))$, $i = 1,\, 2$.
We will make use of the following lemma.

\begin{lem}\label{l2.3}\cite{con5}
Let $[0, T)$  be the maximal interval of existence of the solution
$u(t, x)$ of \eqref{e1.4} with the initial data $u_0 \in H^s$, $s >
\frac{3}{2}$, as given by Proposition \ref{p2.1}. Then the functions
$m_i(t)$, $i = 1,\, 2$, are absolutely continuous on $(0, T)$  with
 \begin{equation*}\frac{d m_{i}}{d t}=u_{xt}(t,x_{i}(t)),\quad  a.e. \quad on \quad(0,T).
 \end{equation*}
\end{lem}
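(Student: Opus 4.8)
The plan is to derive this from the general fact about differentiability of the pointwise extremum of a family of functions that is $C^1$ in time, applied with $v=u$. Writing $w=u_x$, we have $m_2(t)=\max_{x\in\mathbb S}w(t,x)$ and $m_1(t)=\min_{x\in\mathbb S}w(t,x)$, so the assertion is precisely that the max/min of the family $w(t,\cdot)$ is absolutely continuous in $t$ with derivative $w_t=u_{tx}$ evaluated at a point where the extremum is attained. First I would record the regularity that makes this clean: if $u\in C^1([0,T);H^2)$, then $w=u_x\in C^1([0,T);H^1)$, and since $H^1(\mathbb S)\hookrightarrow C(\mathbb S)$ both maps $(t,x)\mapsto w(t,x)$ and $(t,x)\mapsto u_{tx}(t,x)=w_t(t,x)$ are jointly continuous. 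In particular, for each $t$ the extrema are attained at some $x_i(t)\in\mathbb S$ by compactness of $\mathbb S$, and $\sup_{[0,T']\times\mathbb S}|u_{tx}|<\infty$ for every $T'<T$.

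Next I would prove local Lipschitz continuity of $m_2$ on $[0,T']$. For $t,\tau\in[0,T']$, using $w(t,x_2(\tau))\le m_2(t)$ together with the fundamental theorem of calculus in $t$,
\[
m_2(\tau)-m_2(t)\le w(\tau,x_2(\tau))-w(t,x_2(\tau))=\int_t^\tau u_{tx}(s,x_2(\tau))\,ds\le |\tau-t|\sup_{[0,T']\times\mathbb S}|u_{tx}|,
\]
and symmetrically with $t$ and $\tau$ exchanged, so $|m_2(\tau)-m_2(t)|\le C|\tau-t|$. Hence $m_2$ is locally Lipschitz, therefore absolutely continuous (the argument for $m_1$ is identical, or follows by replacing $u$ with $-u$), and by Rademacher's theorem it is differentiable a.e.

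To identify the derivative, fix a point $t$ of differentiability and a maximizer $\xi=x_2(t)$. For $\tau>t$ the bound $m_2(\tau)\ge w(\tau,\xi)$ gives
\[
\frac{m_2(\tau)-m_2(t)}{\tau-t}\ge \frac1{\tau-t}\int_t^\tau u_{tx}(s,\xi)\,ds\xrightarrow[\tau\to t^+]{}u_{tx}(t,\xi),
\]
by joint continuity of $u_{tx}$, so $m_2'(t)\ge u_{tx}(t,\xi)$; for $\tau<t$ the same inequality divided by $\tau-t<0$ reverses and yields $m_2'(t)\le u_{tx}(t,\xi)$. Thus $m_2'(t)=u_{tx}(t,x_2(t))$ a.e., and the minimum case is the same.

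The main obstacle is the regularity gap: the hypothesis is only $u\in C^1([0,T);H^{s-1})$ with $s>\tfrac32$, so $u_{tx}\in H^{s-2}$ need not be a bounded, let alone continuous, function, and the argument above does not apply verbatim. I would close the gap by approximation, which is exactly why Remark \ref{rmk-index-1} is available: pick $u_0^n\in H^\infty$ with $u_0^n\to u_0$ in $H^s$, so that by Proposition \ref{p2.1} and Remark \ref{rmk-index-1} the solutions $u^n\in C^1([0,T);H^2)$ exist on every $[0,T']$ for $n$ large and converge to $u$ in $C([0,T'];H^s)\hookrightarrow C([0,T'];C^1)$, whence $m_i^n\to m_i$ uniformly. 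The lemma holds for each smooth $u^n$; moreover, since $u^n$ is smooth, $u_{xx}^n$ vanishes at the extremum $x_i^n(t)$, so differentiating the first equation of \eqref{e1.4} in $x$ and eliminating $P_{xx}^n$ through the second equation gives at $x_i^n(t)$ the bounded expression $\tfrac{d}{dt}m_i^n=-\tfrac12(m_i^n)^2-\mu(P^n)+2(\mu(u^n)+\kappa)u^n(t,x_i^n(t))$, uniformly controlled in $n$ and $t$ via \eqref{e2.2} and the conserved quantities $\mu_0,\mu_1$. Hence the $m_i^n$ are uniformly Lipschitz, their uniform limit $m_i$ is Lipschitz (thus absolutely continuous), and passing to the limit in this derivative identity recovers $\tfrac{d}{dt}m_i(t)=u_{tx}(t,x_i(t))$ a.e. I expect the delicate point to be this passage to the limit, namely controlling the extremal points $x_i^n(t)$ and justifying convergence of $u_{tx}^n(t,x_i^n(t))$ to the claimed value.
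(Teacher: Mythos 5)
The paper contains no proof of this lemma: it is quoted from Constantin--Escher \cite{con5}, where it is proved for an arbitrary function $v\in C^1([0,T);H^2)$ (circle versus line is immaterial), and every time the paper invokes it the surrounding proof has already reduced to $s=3$ via Remark \ref{rmk-index-1}, so that this regularity is in force. Your core argument --- joint continuity of $u_x$ and $u_{tx}$, the Lipschitz estimate for $m_i$ via the fundamental theorem of calculus in time, Rademacher's theorem, and the two one-sided difference-quotient inequalities at a point of differentiability --- is exactly the proof in \cite{con5}, and it is correct and complete under the hypothesis $u\in C^1([0,T);H^2)$ (indeed $s>\frac52$ already suffices, which is all the joint continuity of $u_{tx}$ needs). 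As you implicitly exploit, this part never touches the PDE, which is why the minimum case follows from the maximum case applied to $-u$.

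Your attempt to cover the whole stated range $s>\frac32$ by smooth approximation is where the trouble lies, and the difficulty you flag at the end is not merely delicate but fatal for the statement as literally written: for $\frac32<s<\frac52$ one has $u_{xt}(t,\cdot)\in H^{s-2}(\mathbb{S})$, which does not embed into $C(\mathbb{S})$ (and for $s<2$ consists of distributions or $L^2$-classes defined at no particular point), so the right-hand side $u_{xt}(t,x_i(t))$ has no pointwise meaning and no limiting procedure can be shown to converge to it. What your scheme could at best produce is the identity $\frac{d}{dt}m_i=-\frac12 m_i^2+2(\mu_0+\kappa)\,u(t,\bar x_i(t))-\mu(P)$ a.e., with measurably chosen extremal points $\bar x_i(t)$ --- this is the consequence of the lemma combined with \eqref{e4.0} that the paper actually uses --- but even for that you would need two ingredients not supplied by the results you cite: a common existence interval $[0,T']$ with convergence $u^n\to u$ in $C([0,T'];H^s)$ for a sequence of approximating data (Remark \ref{rmk-index-1} concerns one fixed datum across different Sobolev indices, not a sequence of data; what you want is Kato-theory continuous dependence, standard but not contained in Proposition \ref{p2.1} as stated), and a selection argument handling the non-uniqueness of the maximizers $x_i^n(t)$ in the limit $n\to\infty$. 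The paper avoids all of this by design: the lemma is only ever applied to $H^3$ solutions, precisely the setting in which \cite{con5} proves it and in which your first argument is already complete.
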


\setcounter{equation}{0}
%%%%%%%%%%%%%%%%%%%%%%%%%%%%%%%%%%%%%%%%%%%%%%
%%%%%%%%%%%%%%%%%%%%%%%%%%%%%%%%%%%%%%%%%%

\section {Wave-breaking mechanism}

In this section, we  derive some sufficient conditions for the breaking waves to the initial-value
problem \eqref{e1.4}. We first state the precise wave-breaking
scenario for the problem \eqref{e1.4} in the following, which was
obtained in \cite{flq} (up to a slight modification).

\begin{prop}\label{t4.2}
Let $u_0\in H^s({\mathbb S}), s > 3/2$, and $u(t,x)$ be the solution
of the initial-value problem \eqref{e1.4} with life-span $T$. Then
$T$ is finite if and only if
 \begin{equation*}\underset{t\uparrow T}{\liminf}\left (\underset{x\in \mathbb{S}}{\inf}
\,u_x(t,x)\right )=-\infty.
\end{equation*}
\end{prop}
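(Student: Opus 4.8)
The plan is to recast this blow-up criterion as a Riccati-type analysis of the two extremal slopes $m_1(t)=\min_{x}u_x(t,x)$ and $m_2(t)=\max_{x}u_x(t,x)$ from \eqref{max-min-1}, coupled with the standard continuation fact inherited from the local theory of Proposition~\ref{p2.1}, namely that a finite maximal time forces $\limsup_{t\uparrow T}\|u(t)\|_{H^s}=+\infty$. First I would differentiate the first equation of \eqref{e1.4} in $x$ to obtain the slope equation $u_{xt}+u_x^2+uu_{xx}+\partial_x^2P=0$, and then eliminate the nonlocal pressure using the elliptic relation in \eqref{e1.4}: since $(\mu-\partial_x^2)P=2\mu(u)u+\tfrac12u_x^2+2\kappa u$ reads $\partial_x^2P=\mu(P)-2\mu(u)u-\tfrac12u_x^2-2\kappa u$, substitution gives
\[
u_{xt}+uu_{xx}=-\tfrac12 u_x^2+2(\mu_0+\kappa)u-\mu(P).
\]
Integrating the elliptic relation over $\mathbb{S}$ and using that $\int_{\mathbb S}P_{xx}\,dx=0$ shows $\mu(P)=2\mu_0^2+2\kappa\mu_0+\tfrac12\mu_1^2$ is a conserved constant, with $\mu_0,\mu_1$ as in \eqref{2.1}--\eqref{2.2}.

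Next I would evaluate this identity at the moving extrema. By Lemma~\ref{l2.3} the functions $m_i$ are absolutely continuous with $\dot m_i=u_{xt}(t,x_i(t))$ a.e., and since $x_i(t)$ is an interior extremum of $u_x(t,\cdot)$ one has $u_{xx}(t,x_i(t))=0$; hence
\[
\dot m_i(t)=-\tfrac12 m_i(t)^2+g_i(t),\qquad g_i(t)=2(\mu_0+\kappa)\,u(t,x_i(t))-\mu(P).
\]
The a priori bound \eqref{e2.2} on $\|u\|_{L^\infty}$ together with the conserved value of $\mu(P)$ yields $|g_i(t)|\le C_0$ for a constant $C_0=C_0(\mu_0,\mu_1,\kappa)$.

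For the implication $\liminf_{t\uparrow T}m_1=-\infty\Rightarrow T<\infty$, the hypothesis provides a time $t_0$ with $m_1(t_0)<-\sqrt{2C_0}$; comparison of $\dot m_1\le-\tfrac12 m_1^2+C_0$ with the Riccati ODE $\dot y=-\tfrac12 y^2+C_0$ then forces $m_1(t)\to-\infty$ in finite time, and since $\|u_x\|_{L^\infty}\ge-m_1$ while $\|u_x\|_{L^\infty}\lesssim\|u\|_{H^s}$ (as $H^s\hookrightarrow C^1$ for $s>3/2$), this makes $\|u\|_{H^s}$ blow up in finite time, so $T<\infty$. For the converse $T<\infty\Rightarrow\liminf_{t\uparrow T}m_1=-\infty$, the same inequality for $i=2$ keeps $m_2$ bounded above by $\max(m_2(0),\sqrt{2C_0})$, and since $\int_{\mathbb S}u_x\,dx=0$ gives $m_1\le 0\le m_2$, we have $\|u_x\|_{L^\infty}=\max(-m_1,m_2)$. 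If $\liminf_{t\uparrow T}m_1>-\infty$ were true, then $\|u_x\|_{L^\infty}$ would be bounded on $[0,T)$, and the energy estimate $\frac{d}{dt}\|u\|_{H^s}^2\le C\,(1+\|u_x\|_{L^\infty})\|u\|_{H^s}^2$ with Gronwall over the finite interval $[0,T)$ would give $\sup_{[0,T)}\|u\|_{H^s}<\infty$, contradicting the blow-up criterion.

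The technical work concentrates in two places. The energy estimate used above demands commutator (Kato--Ponce) and Moser product estimates to handle the transport term $uu_x$ and the nonlocal term $\partial_xP=(\mu-\partial_x^2)^{-1}\partial_x(\cdots)$; because $(\mu-\partial_x^2)^{-1}\partial_x$ is of order $-1$ one bounds $\|\partial_xP\|_{H^s}\lesssim(1+\|u_x\|_{L^\infty})\|u\|_{H^s}$, which is routine. The hard part will be the rigorous justification of the pointwise Riccati identity $\dot m_i=-\tfrac12 m_i^2+g_i$ at the low regularity $s>3/2$, where $u_{xx}(t,\cdot)$ need not be continuous and ``$u_{xx}(x_i)=0$'' is not literally meaningful. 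I would handle this through Lemma~\ref{l2.3}, which already encodes the a.e.\ differentiability of $m_i$, supplemented by first establishing the identity for smooth data and transferring to $s>3/2$ via the $s$-independence of $T$ recorded in Remark~\ref{rmk-index-1} and the continuous dependence in Proposition~\ref{p2.1}; the Riccati comparisons and Gronwall step are then straightforward.
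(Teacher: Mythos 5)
The paper never actually proves Proposition \ref{t4.2}: it is imported from \cite{flq} ``up to a slight modification'', so there is no in-paper proof to compare against, and your argument has to be judged as a self-contained alternative. In outline it is correct, and it is assembled from exactly the tools the paper uses for its other blow-up results: your slope identity is precisely \eqref{e4.0} (your constant $\mu(P)=2\mu_0^2+2\kappa\mu_0+\tfrac12\mu_1^2$ agrees with the constant there), the evaluation at the moving extrema via Lemma \ref{l2.3} with $u_{xx}(t,x_i(t))=0$ is what the paper does in Theorem \ref{thm-blow-4} and Remark \ref{rmk-blow-up-min}, and the Riccati comparison is the engine of Theorem \ref{t4.5}. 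The genuinely valuable idea you add --- the one that makes the ``$T$ finite $\Rightarrow$ blow-down'' direction work from a merely one-sided hypothesis --- is that the same Riccati inequality applied to $m_2$ keeps $\max_x u_x$ bounded above automatically, so a lower bound on $m_1$ upgrades to two-sided control of $\|u_x\|_{L^\infty}$, which your Kato--Ponce energy estimate (the $\varepsilon=0$ analogue of \eqref{energy-1-7}) converts into an $H^s$ bound, contradicting finite maximal time.

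Two caveats, neither fatal but both needing repair. First, the contradiction step invokes the continuation criterion ``$T<\infty$ forces $\limsup_{t\uparrow T}\|u(t)\|_{H^s}=+\infty$''; this is not contained in Proposition \ref{p2.1} as stated, and it requires the standard Kato-method fact that the local existence time admits a lower bound depending only on $\|u_0\|_{H^s}$ --- you should say explicitly that you are using this. Second, your plan to justify the extremal-slope identities for $3/2<s<5/2$ ``via the $s$-independence of $T$ recorded in Remark \ref{rmk-index-1}'' does not work as described: that remark concerns one fixed datum viewed in different Sobolev classes and cannot make a datum that is only in $H^s$ any smoother. What does work (and is tacitly what the paper leans on every time it writes ``it suffices to consider the case $s=3$'') is: mollify $u_0$, derive the Riccati inequalities for the resulting smooth solutions, write them in \emph{integrated} form, and pass to the limit using the continuous dependence of Proposition \ref{p2.1} together with lower semicontinuity of the maximal existence time under convergence of the data; the comparison arguments then run unchanged on the limiting integral inequalities. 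With these two points made precise, your proof is complete and is a legitimate substitute for the external citation.
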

In what follows, we establish some sufficient conditions
guaranteeing the development of singularities by means of the
wave-breaking scenario. We are now in a position to give the first wave-breaking result for the
$\mu$-CH equation.

\begin{thm}\label{t4.3}
Let $u_0\in H^s(\mathbb{S}), s > 3/2$ and $T>0$ be the maximal time
of existence of the corresponding solution $u(t,x)$ to \eqref{e1.4}
with the initial data $u_0$. If $\
(\sqrt{3}/\pi)|\mu_0+\kappa|<\mu_1,$ where $\mu_0 $ and $ \mu_1$ are
defined in \eqref{2.1} and \eqref{2.2}, then the corresponding
solution $u(t,x)$ to \eqref{e1.4} associated with the $\mu$-CH
equation must blow up in finite time  $T$ with
 \begin{equation*}0<T\le\inf\limits_{\alpha \in I}\left (\dfrac{6}{1-6\alpha}+4\pi^2\alpha
\dfrac{1+|\int_{\mathbb{S}}u_{0x}^3(x)\;d x|}
{6\pi^2\alpha\mu_1^4-3(\mu_0+\kappa)^2\mu_1^2}\right )
 \end{equation*}
where $ I = \left
(\frac{(\mu_0+\kappa)^2}{2\pi^2\mu_1^2}, \, \frac1{6} \right )$,
such that
 \begin{equation*}\liminf\limits_{t\uparrow T}\left (\inf\limits_{x\in\mathbb{S}}u_x(t,x)\right )=-\infty.
  \end{equation*}
\end{thm}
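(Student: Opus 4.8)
The plan is to monitor the single scalar $V(t)=\int_{\mathbb S}u_x^3(t,x)\,dx$ and to show that, under the hypothesis, it obeys a Riccati-type differential inequality that drives $V\to-\infty$ in finite time. Since $s>\tfrac32$ forces $u_x\in C([0,T),C(\mathbb S))$, the quantity $V(t)$ stays finite and continuous as long as the strong solution exists; hence a finite blow-up time for $V$ caps the life-span $T$. Finiteness of $T$ is in turn equivalent to $\liminf_{t\uparrow T}\inf_{x}u_x(t,x)=-\infty$ by Proposition \ref{t4.2}, so the whole theorem reduces to producing the Riccati inequality together with its explicit blow-up time. I stress at the outset that the pointwise bound \eqref{max-1-1} will \emph{not} be sharp enough to reach the stated threshold; the $L^2$ Poincar\'e estimate \eqref{max-1-2} is what supplies the factor $\pi$.

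First I would differentiate the first equation of \eqref{e1.4} in $x$, giving $u_{xt}=-u_x^2-uu_{xx}-P_{xx}$, and eliminate $P_{xx}$ through the elliptic relation. Writing $P=A^{-1}f$ with $f=2\mu_0u+\tfrac12u_x^2+2\kappa u$, identity \eqref{formula-1.7} yields $P_{xx}=A^{-1}\partial_x^2 f=\mu(f)-f$, where $\mu(f)=2\mu_0(\mu_0+\kappa)+\tfrac12\mu_1^2$ by \eqref{2.1}--\eqref{2.2}. Thus $u_{xt}=-\tfrac12u_x^2-uu_{xx}+2(\mu_0+\kappa)u-\mu(f)$. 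Differentiating $V$, substituting this, integrating the $uu_{xx}$ contribution by parts via $u_x^2u_{xx}=\tfrac13(u_x^3)_x$, and — crucially — using conservation of $\mu_0$ and $\mu_1$ to cancel every term proportional to $\mu_0\mu_1^2$, I expect to arrive at
\[
\frac{dV}{dt}=3\int_{\mathbb S}u_x^2\,u_{xt}\,dx
=-\frac12\int_{\mathbb S}u_x^4\,dx+6(\mu_0+\kappa)\int_{\mathbb S}(u-\mu_0)u_x^2\,dx-\frac32\mu_1^4 .
\]

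The decisive step is the sharp estimate of the cubic term. Here I would combine the Poincar\'e inequality \eqref{max-1-2}, i.e. $\|u-\mu_0\|_{L^2}\le\mu_1/(2\pi)$, with Cauchy--Schwarz, $\big|\int_{\mathbb S}(u-\mu_0)u_x^2\,dx\big|\le\|u-\mu_0\|_{L^2}\big(\int_{\mathbb S}u_x^4\,dx\big)^{1/2}$, and then Young's inequality with a free weight $\beta=3\alpha$ to absorb part of $\int_{\mathbb S}u_x^4$ into the $-\tfrac12\int_{\mathbb S}u_x^4$ term. Using finally the Cauchy--Schwarz lower bound $\int_{\mathbb S}u_x^4\,dx\ge V^2/\mu_1^2$, this should produce, for every $\alpha$ in the range singled out in the statement,
\[
\frac{dV}{dt}\le-\frac{1-6\alpha}{2\mu_1^2}\,V^2
-\frac{6\pi^2\alpha\mu_1^4-3(\mu_0+\kappa)^2\mu_1^2}{4\pi^2\alpha},
\qquad \alpha\in I=\Big(\frac{(\mu_0+\kappa)^2}{2\pi^2\mu_1^2},\ \tfrac16\Big).
\]
The hypothesis $(\sqrt3/\pi)|\mu_0+\kappa|<\mu_1$ is precisely the condition that $I$ be nonempty: it guarantees the existence of an admissible $\alpha$ for which the quadratic coefficient is still strictly negative while the constant term is already strictly negative.

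It then remains to integrate this inequality, which I would do by a two-phase comparison after writing it as $\dot V\le -AV^2-B$ with $A,B>0$. While $V\ge0$ one has $\dot V\le-B$, so $V$ drops below a fixed negative level within time at most $(1+|V(0)|)/B=4\pi^2\alpha\,(1+|\int_{\mathbb S}u_{0x}^3\,dx|)/(6\pi^2\alpha\mu_1^4-3(\mu_0+\kappa)^2\mu_1^2)$, which is exactly the second term in the asserted bound; from that negative level, discarding $-B$ and comparing with the pure Riccati equation $\dot V\le -AV^2$ forces $V\to-\infty$ within an additional time of order $1/A\sim \mu_1^2/(1-6\alpha)$, producing the first term. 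Summing the two contributions and optimizing over $\alpha\in I$ gives the stated explicit upper bound on $T$, and Proposition \ref{t4.2} converts the finiteness of $T$ into $\liminf_{t\uparrow T}\inf_x u_x=-\infty$. The main obstacle is the third paragraph: getting the sharp constant (hence the exact interval $I$ and the $\pi$ in the hypothesis) depends entirely on replacing sup-norm control of $u-\mu_0$ by the $L^2$ Poincar\'e bound and on balancing the Young weight against $-\tfrac12\int_{\mathbb S}u_x^4$; once that balance is fixed, the algebraic cancellations and the ODE comparison are routine.
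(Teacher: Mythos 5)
Up to the final ODE step your proposal reproduces the paper's proof almost verbatim: you track the same quantity $V(t)=\int_{\mathbb S}u_x^3\,dx$, derive the same differentiated equation \eqref{e4.0} via \eqref{formula-1.7}, obtain the same identity \eqref{e3.1} after the integration by parts and the cancellation of the $\mu_0\mu_1^2$ terms, and estimate the cubic term with exactly the paper's combination of Cauchy--Schwarz, the Wirtinger inequality of Lemma \ref{l2.2} (i.e.\ \eqref{max-1-2}), and Young's inequality with weight $3\alpha$ --- which is indeed what produces the interval $I$ and the constant $c_2=\bigl(6\pi^2\alpha\mu_1^4-3(\mu_0+\kappa)^2\mu_1^2\bigr)/(4\pi^2\alpha)$. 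Your phase-one estimate, giving the drop of $V$ below $-1$ within time $(1+|V(0)|)/c_2$, is also identical to the paper's and yields the second term of the stated bound. (One technical point you skip: these manipulations use $u_{xx}$, $u_{xxx}$ pointwise, so one should first reduce to $s=3$ via Remark \ref{rmk-index-1}, as the paper does.)

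The genuine gap is in your last reduction. You bound $\int_{\mathbb S}u_x^4\,dx\ge V^2/\mu_1^2$ (Cauchy--Schwarz against $u_x$), which gives the quadratic Riccati inequality $\dot V\le -AV^2-c_2$ with $A=\frac{1-6\alpha}{2\mu_1^2}$; starting from $V(t_1)\le -1$, the escape time of this inequality is $1/A=\frac{2\mu_1^2}{1-6\alpha}$, \emph{not} the term $\frac{6}{1-6\alpha}$ claimed in the theorem. So your argument does prove finite-time blow-up (and hence, via Proposition \ref{t4.2} and $V\ge \mu_1^2\inf_x u_x$, the wave-breaking conclusion), but it does not establish the stated lifespan bound, and whenever $\mu_1^2>3$ the bound you get is strictly weaker; your closing claim that this ``produces the first term'' is false in general. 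The paper's device is different and $\mu_1$-free: since $\mathbb S$ has unit measure, H\"older gives
\begin{equation*}
|V|\le \int_{\mathbb S}|u_x|^3\,dx\le\Bigl(\int_{\mathbb S}u_x^4\,dx\Bigr)^{3/4},
\qquad\hbox{i.e.}\qquad \int_{\mathbb S}u_x^4\,dx\ge |V|^{4/3},
\end{equation*}
leading to $\dot V\le -c_1V^{4/3}-c_2$ with $c_1=\frac{1-6\alpha}{2}$; the time for this inequality to drive $V$ from $-1$ to $-\infty$ is $c_1^{-1}\int_1^{\infty}w^{-4/3}\,dw=3/c_1=\frac{6}{1-6\alpha}$, which is exactly the first term of the theorem. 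Replacing your Cauchy--Schwarz step by this H\"older step (and integrating the $4/3$-power Riccati inequality as $\frac{d}{dt}V^{-1/3}\ge c_1/3$) turns your proposal into a complete proof of the statement.
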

\begin{proof}
Thanks to Remark \ref{rmk-index-1}, it suffices to consider the case
$s=3$. Differentiating the first equation in \eqref{e1.4} with
respect to $x$ yields
 \begin{equation}\label{one-deri-1}
 u_{tx}+u^2_x+uu_{xx}+A^{-1}\partial^2_x \left
(2u\mu_0+\frac1{2}u^2_x+2\kappa u \right )= 0.
\end{equation}
In view of \eqref{2.1}, \eqref{2.2} and \eqref{formula-1.7}, we have
 \begin{equation}\label{e4.0}
u_{tx}=-\frac1{2}u^2_x-uu_{xx}+2u(\mu_0+\kappa)-2\mu_0^2-\frac1{2}\mu_1^2-2\kappa
\mu_0.
\end{equation}
Multiplying (\ref {e4.0}) by $3u^2_x$ and integrating on
$\mathbb{S}$ with respect to $x$, we obtain for any $ t \in [0, T) $
that
 \begin{equation}\label{e3.1}
 \begin{split}
 \frac{d}{d t}\int_{\mathbb{S}}u_x^3\;d x &=\int_{\mathbb{S}}3u_x^2u_{xt}\;d x=-\frac{3}{2}\int_{\mathbb{S}}u_x^4\;d
 x-\int_{\mathbb{S}}3uu_x^2u_{xx}\;d x\\
 &\qquad\qquad\qquad+
 6(\mu_0+\kappa)\int_{\mathbb{S}}(u-\mu_0)u_x^2\;d x-\frac{3}{2}\left (\int_{\mathbb{S}}u_x^2\;d x\right )^2\\
 &=-\frac1{2}\int_{\mathbb{S}}u_x^4\;d x-\frac{3}{2}\mu_1^4
 +6(\mu_0+\kappa)\int_{\mathbb{S}}(u-\mu_0)u_x^2\;d x.
  \end{split}\end{equation}
On the other hand, it follows from Lemma \ref{l2.2} for any $ \alpha > 0 $ that
 \begin{equation*}
 \begin{split}
 (\mu_0+\kappa)\int_{\mathbb{S}}(u-\mu_0)u_x^2\;d x
 &\le|\mu_0+\kappa|\left (\int_{\mathbb{S}}(u-\mu_0)^2\;d x\right )^{\frac1{2}}
 \left (\int_{\mathbb{S}}u_x^4\;d x\right )^{\frac1{2}}\\
 &\le\frac{\alpha}{2}\int_{\mathbb{S}}u_x^4\;d x
 +\frac{(\mu_0+\kappa)^2}{2\alpha}\int_{\mathbb{S}}(u-\mu_0)^2\;d x\\
 &\le\frac{\alpha}{2}\int_{\mathbb{S}}u_x^4\;d x
 +\frac{(\mu_0+\kappa)^2}{8\pi^2\alpha}\int_{\mathbb{S}}u_x^2\;d x.
 \end{split}\end{equation*}
Therefore we deduce that
 \begin{equation}\label{blow-up-1-1}\dfrac{d}{d t}\int_{\mathbb{S}}u_x^3\;d x\le \left (3\alpha-\frac1{2}\right )
 \int_{\mathbb{S}}u_x^4
 \;d x-\frac{3}{2}\mu_1^4+\frac{3}{4\pi^2\alpha}(\mu_0+\kappa)^2\mu_1^2.
 \end{equation}
By the assumption of the theorem, we know that
 $ {(\mu_0+\kappa)^2}/({2\pi^2\mu_1^2})<1/6. $ Let   $ \alpha >0$ satisfy
 $$\frac{(\mu_0+\kappa)^2}{2\pi^2\mu_1^2}<\alpha<\frac1{6}.$$
This in turn implies that
$$ c_1:=\frac{1}{2}-3\alpha>0 \quad \mbox{and} \quad
c_2:=\frac{3}{2}\mu_1^4-\frac{3}{4\pi^2\alpha}(\mu_0+\kappa)^2\mu_1^2
>0.$$
Hence, applying H\"{o}lder's inequality to \eqref{blow-up-1-1} yields
\begin{equation*} \frac{d}{d t}\int_{\mathbb{S}}u_x^3\;d x\le
-c_1\int_{\mathbb{S}}u_x^4\;d x-c_2\le
 -c_1\left (\int_{\mathbb{S}}u_x^3\;d x\right )^{\frac{4}{3}}-c_2.
\end{equation*}
 Let  $V(t)=\int_{\mathbb{S}}u_x^3(t, x)\ d x$ with $ t \in [0, T).$ Then the above inequality can be rewritten as
\begin{equation}\label{ine-diff-3-1}
\frac{d}{d t}V(t)\le-c_1(V(t))^{\frac{4}{3}}-c_2\le-c_2<0,\quad
t\in[0,T).\end{equation}
This implies that $V(t)$ decreases strictly
in $[0, T). $  Let $t_1=(1+|V(0)|)/c_2.$ One can assume $ t_1 < T. $
Otherwise, $ T \le t_1 < \infty $ and the theorem is proved. It then
follows from \eqref{ine-diff-3-1} that
\begin{equation*}V(t)\le\left [\dfrac{3}{c_1(t-t_1)-3}\right ]^3\rightarrow-\infty, \quad\text{as}\quad t\rightarrow t_1+\frac{3}{c_1}.
\end{equation*}
On the other hand, we have
 \begin{equation*}V(t)=\int_{\mathbb{S}}u_x^3\;d x
 \ge\inf\limits_{x\in\mathbb{S}}u_x(t,x)\int_{\mathbb{S}}u_x^2\;d x
 =\mu_1^2\inf\limits_{x\in\mathbb{S}}u_x(t,x).
 \end{equation*}
 This then implies that $0<T \le t_1+3/c_1$ such that
\begin{equation*}\liminf\limits_{t\uparrow T}\left (\inf\limits_{x\in\mathbb{S}}u_x(t,x)\right )
=-\infty.\end{equation*}
 This completes the proof of Theorem \ref{t4.3}.
\end{proof}

In the case $ (\sqrt{3}/\pi)|\mu_0+\kappa|\ge\mu_1$, we have the
following wave-breaking result.
\begin{thm} \label{t4.5}
Let $u_0 \in H^s(\mathbb{S}), s
> 3/2$ and $T>0$ be the maximal time of existence of the
corresponding solution $u(t,x)$ to \eqref{e1.4} with the initial
data $u_0$. If  $ (\sqrt{3}/\pi)|\mu_0+\kappa|\ge\mu_1$ and
\begin{equation*}\inf_{x \in \mathbb{S}}  u'_0(x)
<-\sqrt{2\mu_1 \left
(\frac{\sqrt{3}}{3}|\mu_0+\kappa|-\frac1{2}\mu_1 \right )}:\equiv-K,
\end{equation*}
where $ u'_0(x) $ is the derivative of $ u_0(x) $ with respective to
$ x, $ then the corresponding solution $u(t,x)$ to \eqref{e1.4}
blows up in finite time  $T$ with
\begin{equation*}0<T\leq \frac{\inf_{x \in\mathbb{S}}
u'_0(x)}{K^2-(\inf_{x \in\mathbb{S}} u'_0(x))^2},
\end{equation*} such that
\begin{equation*}\liminf\limits_{t\uparrow T}\left (\inf\limits_{x\in\mathbb{S}}u_x(t,x)\right )
=-\infty.
\end{equation*}
\end{thm}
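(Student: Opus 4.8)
The plan is to monitor the pointwise minimal slope $m_1(t)=\min_{x\in\mathbb{S}}u_x(t,x)=u_x(t,x_1(t))$ and to show that it obeys a Riccati-type differential inequality which drives it to $-\infty$ in finite time; the wave-breaking criterion of Proposition \ref{t4.2} then converts this into finiteness of $T$ together with the stated blow-up behaviour. As in Theorem \ref{t4.3}, Remark \ref{rmk-index-1} lets me assume $s=3$, so that $u(t,\cdot)\in C^2(\mathbb{S})$; since $x_1(t)$ is an interior minimum of $x\mapsto u_x(t,x)$ on the circle, one has $u_{xx}(t,x_1(t))=0$, and Lemma \ref{l2.3} guarantees that $m_1$ is absolutely continuous with $\frac{dm_1}{dt}=u_{xt}(t,x_1(t))$ almost everywhere on $(0,T)$.

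First I would evaluate the pointwise identity \eqref{e4.0} at $x=x_1(t)$. There the term $-u\,u_{xx}$ vanishes, and the remaining algebraic terms combine (using $\mu(u)=\mu_0$) into $2\,(u(t,x_1(t))-\mu_0)(\mu_0+\kappa)-\frac12\mu_1^2$, so that $\frac{dm_1}{dt}=-\frac12 m_1^2+2\,(u(t,x_1(t))-\mu_0)(\mu_0+\kappa)-\frac12\mu_1^2$ almost everywhere on $(0,T)$.

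Next I would absorb the sign-indefinite term. The amplitude bound \eqref{max-1-1} yields $|u(t,x_1(t))-\mu_0|\le\frac{\sqrt3}{6}\mu_1$, hence $2\,(u-\mu_0)(\mu_0+\kappa)\le\frac{\sqrt3}{3}\mu_1|\mu_0+\kappa|$. Recalling $K^2=2\mu_1\big(\frac{\sqrt3}{3}|\mu_0+\kappa|-\frac12\mu_1\big)$, which is genuinely positive because the hypothesis $(\sqrt3/\pi)|\mu_0+\kappa|\ge\mu_1$ forces $\frac{2\sqrt3}{3}|\mu_0+\kappa|>\mu_1$, this collapses to the Riccati inequality $\frac{dm_1}{dt}\le-\frac12\big(m_1^2-K^2\big)$ on $(0,T)$.

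It remains to run the blow-up argument. By hypothesis $m_1(0)=\inf_{x}u_0'(x)<-K$, so the right-hand side is strictly negative at $t=0$; consequently $m_1$ is decreasing and stays below $-K$, whence $m_1^2-K^2>0$ for all $t$. Separating variables in the Riccati inequality (after a monotonicity bound such as $m_1^2\ge m_1(0)^2$) and integrating produces a finite time, of the form displayed in the statement, at which $m_1(t)\to-\infty$; Proposition \ref{t4.2} then gives that $T$ is finite and $\liminf_{t\uparrow T}\big(\inf_{x\in\mathbb{S}}u_x(t,x)\big)=-\infty$. The delicate points I expect are the justification that the running minimum is realised at an interior critical point where $u_{xx}=0$ (together with the a.e.\ differentiability supplied by Lemma \ref{l2.3}), and the careful integration of the Riccati inequality needed to pin down the explicit upper bound for $T$ with the correct constant.
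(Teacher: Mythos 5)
Your argument is correct, but it is not the proof the paper gives for Theorem \ref{t4.5}; it is essentially the alternative argument the authors themselves sketch immediately afterwards in Remark \ref{rmk-blow-up-min}. The paper's proof never tracks the running minimum $m_1(t)$: it fixes a point $x_0$ where $u_0'$ attains its infimum and follows the slope along the characteristic through $x_0$, setting $w(t)=u_x(t,q(t,x_0))$ with $q$ the flow from Lemma \ref{lem-diff-1}. Then $\frac{d}{dt}w=(u_{tx}+uu_{xx})(t,q(t,x_0))$, and the term $-uu_{xx}$ in \eqref{e4.0} cancels against the $+uu_{xx}$ coming from the chain rule, so the same Riccati inequality $\frac{d}{dt}w\le-\frac{1}{2}\left(w^2-K^2\right)$ is obtained for a function that is genuinely $C^1$ in $t$ --- no interior-minimum argument, no $u_{xx}(t,x_1(t))=0$, and no appeal to the a.e.-differentiability result of Lemma \ref{l2.3}. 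Since $\inf_{x\in\mathbb{S}}u_x(t,x)\le w(t)$, finite-time divergence of $w$ still yields the conclusion through Proposition \ref{t4.2}. Your route buys directness (you control exactly the quantity appearing in the blow-up criterion) at the price of the measure-theoretic bookkeeping you correctly flag as delicate; the paper's route trades that bookkeeping for the mild indirection of working along a characteristic. From the Riccati inequality onward the two arguments are identical.

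One concrete warning about the step you leave implicit (``integrating produces a finite time, of the form displayed in the statement''). Carrying out the integration honestly with your monotonicity device: setting $\delta=1-K^2/(m_1(0))^2$, one has $K^2\le(1-\delta)\,m_1(t)^2$ and hence $\frac{d}{dt}m_1\le-\frac{1}{2}m_1^2+\frac{1}{2}(1-\delta)m_1^2=-\frac{\delta}{2}m_1^2$, which integrates to $T\le-2/(\delta\,m_1(0))$, i.e. $T\le 2\inf_x u_0'/\bigl(K^2-(\inf_x u_0')^2\bigr)$ --- twice the bound displayed in the theorem. The paper reaches the displayed constant only through the algebraic slip $-\frac{1}{2}w^2\left[1-(1-\delta)\right]=-\delta w^2$ (the left-hand side actually equals $-\frac{\delta}{2}w^2$). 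Moreover, the stated constant cannot be recovered from the Riccati inequality at all: the comparison ODE $y'=-\frac{1}{2}(y^2-K^2)$, $y(0)=m_1(0)$, blows up at time $\frac{1}{K}\ln\frac{m_1(0)-K}{m_1(0)+K}$, which for $K=1$, $m_1(0)=-2$ equals $\ln 3$, strictly later than the stated time $2/3$. So your sketch, done carefully, proves the theorem with the upper bound on $T$ weakened by a factor of $2$; that is the best this comparison argument can give, and it is exactly the point where the paper's own proof is flawed.
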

\begin{proof}
As discussed above, it suffices to consider the case $s=3$.
Note that the assumption $\  (\sqrt{3}/\pi)|\mu_0+\kappa|\ge\mu_1$
implies that $\  (2/\sqrt{3})|\mu_0+\kappa| > \mu_1$. Therefore the
non-negative constant $K$ is well-defined.

By Lemma \ref{l2.3}, there is $ x_0 \in \mathbb{S} $ such that $
\displaystyle u_0'(x_0) = \inf_{x \in \mathbb{S}} u_0'(x). $ Define
$w(t)=u_x(t,q(t,x_0))$, where $q(t,x_0)$ is the flow of
$u(t,q(t,x_0))$. Then
\begin{equation*}\dfrac{d}{d
t}w(t)=(u_{tx}+u_{xx}q_t)(t,q(t,x_0))=(u_{tx}+uu_{xx})(t,q(t,x_0)).
\end{equation*}
Substituting $(t,q(t,x_0))$ into \eqref{e4.0} and using
\eqref{e2.1}, we obtain
\begin{equation*}
\begin{split}
\dfrac{d}{d t}w(t)&=-\frac1{2}w^2(t)+2(\mu_0+\kappa)
u(t,q(t,x_0))-2\mu_0(\mu_0+\kappa)-\frac1{2}\mu_1^2\\
&=-\frac1{2}w^2(t)+2(\mu_0+\kappa)[u(t,q(t,x_0))-\mu_0]-\frac1{2}\mu^2_1,
\end{split}
\end{equation*}
which together with \eqref{max-1-1} implies that
\begin{equation}\label{4.32}
\begin{split}
\dfrac{d}{d t}w(t) &\leq
-\frac1{2}w^2(t)+\mu_1(\frac{\sqrt{3}}{3}|\mu_0+\kappa|-\frac1{2}\mu_1)
=-\frac1{2}w^2(t)+\frac{1}{2} K^2.
\end{split}
\end{equation}
By the assumption $w(0)= u_{0}'(x_0)< - K$, we have $w^2(0)> K^2$.
We now claim that $w(t)< - K$ holds for any $t \in [0, T).$ In fact,
assuming the contrary would, in view of $w(t)$ being continuous,
ensure the existence of $t_0 \in (0, T)$ such that $w^2(t)> K^2$ for
$t \in [0, t_0)$ but $w^2(t_0)= K^2$. Combining this with
\eqref{4.32} would give
\begin{equation}\label{4.33}
\frac{d}{dt}w(t)<0  \quad \mbox{a.e. } \, \mbox{on }\quad  [0, t_0).
\end{equation}
 Since $w(t)$ is absolutely continuous on $[0, t_0],$ an
 integration of this inequality would give the following inequality and
 we get the contradiction
 \begin{equation*}\label{4.34}w(t_0)<w(0)=u_{0}'(x_0)< - K.
 \end{equation*}
This proves the previous claim. Therefore, we get $\frac{d}{dt} w(t) <0$ on $[0, T)$, which
implies that $w(t)$ is  strictly decreasing  on $[0,T)$. Set
 \begin{equation*}\delta:=1-\left(\frac{K}{u'_0(x_0)}\right)^2 \in \left (0,\, 1\right ).
 \end{equation*}
And so
 \begin{equation*}\frac{K^2}{1-\delta}=(u'_0(x_0))^2 < w^2(t),
 \quad \mbox{i.e.} \quad K^2<(1-\delta)w^2(t).\end{equation*}
Therefore
 \begin{equation*}\dfrac{d}{d t}w(t)\le-\frac1{2}w^2(t)\left[1-(1-\delta)\right]= -\delta w^2(t),
  \quad t\in[0,T),
  \end{equation*}
which leads to
 \begin{equation*}w(t)\leq \frac{u'_0(x_0)}{1+\delta \,t \,u'_0(x_0)}\rightarrow-\infty,
 \quad\text{as}\quad t\rightarrow -\frac{1}{\delta \,u'_0(x_0)}.
 \end{equation*}
This implies
\begin{equation*}T\leq -\frac{1}{\delta \,u'_0(x_0)}=\frac{\inf_{x \in\mathbb{S}}
u'_0(x)}{K^2-(\inf_{x \in\mathbb{S}} u'_0(x))^2}<+\infty.
\end{equation*} In consequence, we have
\begin{equation*}\liminf\limits_{t\uparrow T}\left (\inf\limits_{x\in\mathbb{S}}u_x(t,x)\right )=-\infty.
\end{equation*}
This completes the proof of Theorem \ref{t4.5}.
\end{proof}

\begin{rmk}\label{rmk-blow-up-min} We can apply Lemma \ref{l2.3} to verify the above theorem
under the same conditions.
In fact, if we define
$w(t)=u_x(t,\xi(t))=\inf\limits_{x\in\mathbb{S}}[u_x(t,x)]$, then
for all $t\in[0,T)$, $u_{xx}(t,\xi(t))=0$. Thus if
$(\sqrt{3}/\pi)|\mu_0+\kappa|\ge\mu_1$, one finds that
$$
\frac{d}{dt} w(t)\le-\frac1{2}w^2(t)+\frac1{2}K^2,$$ where $K$ is
the same as  Theorem \ref{t4.5}. Then by means of the assumptions of
Theorem \ref{t4.5} and following the line of the proof of Theorem
\ref{t4.5}, we see that if
\begin{equation*}w(0)<-\sqrt{2\mu_1\left(\frac{\sqrt{3}}{3}|\mu_0+\kappa|-\frac1{2}\mu_1\right)},
\end{equation*} then $T$ is
finite and $\liminf\limits_{t\uparrow
T}\left(\inf\limits_{x\in\mathbb{S}}u_x(t,x)\right)=-\infty$.
\end{rmk}
Recall the definition of the extrema $m_1(t)$, $m_2(t)$ in
\eqref{max-min-1} of the slope $u_x(t, x)$ on the circle
$\mathbb{S}$, we may get the following wave-breaking result.
\begin{thm} \label{thm-blow-4}
Let $u_0 \in H^s(\mathbb{S}), s
> 3/2$ and $T>0$ be the maximal time of existence of the
corresponding solution $u(t,x)$ to \eqref{e1.4} with the initial
data $u_0$. If
 \begin{equation*} m_1(0)+m_2(0) < -8|\kappa| \quad \mbox{when} \quad \frac{2\sqrt{3}}{3}|\mu_0|<
 \mu_1, \quad \mbox{or}
\end{equation*}
 \begin{equation*} m_1(0)+m_2(0) < -8|\kappa|-2\sqrt{2}C_1 \quad \mbox{when} \quad
 \frac{2\sqrt{3}}{3}|\mu_0|\geq
 \mu_1
\end{equation*}
with
$C_1:=\sqrt{\left|\frac{\sqrt{3}}{3}|\mu_0|-\frac1{2}\mu_1\right|\mu_1}$,
then the corresponding solution $u(t,x)$ to \eqref{e1.4} blows up in
finite time  $T$.
\end{thm}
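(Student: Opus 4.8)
The plan is to track the sum of the extreme slopes $h(t):=m_1(t)+m_2(t)$ and to show that it satisfies a Riccati-type differential inequality whose constant forcing, although positive, is overwhelmed by a sufficiently negative initial value. As in Theorems \ref{t4.3} and \ref{t4.5}, by Remark \ref{rmk-index-1} it suffices to take $s=3$, so that Lemma \ref{l2.3} applies and $m_1,m_2$ are absolutely continuous with $\tfrac{dm_i}{dt}=u_{xt}(t,x_i(t))$ a.e. Since $x_i(t)$ is an interior extremum of $u_x(t,\cdot)$ we have $u_{xx}(t,x_i(t))=0$, so evaluating the differentiated equation \eqref{e4.0} at $(t,x_i(t))$ gives, for $i=1,2$,
$$\frac{dm_i}{dt}=-\tfrac12 m_i^2+2(\mu_0+\kappa)\bigl(u(t,x_i(t))-\mu_0\bigr)-\tfrac12\mu_1^2 \quad\text{a.e. on }(0,T).$$
This is the two-point analogue of the single-extremum computation already carried out in Remark \ref{rmk-blow-up-min}.

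Next I would add the two identities and use the elementary bound $m_1^2+m_2^2\ge\tfrac12(m_1+m_2)^2$ to obtain the Riccati structure
$$\frac{dh}{dt}\le -\tfrac14 h^2+2(\mu_0+\kappa)\bigl[(u_1-\mu_0)+(u_2-\mu_0)\bigr]-\mu_1^2,$$
where $u_i=u(t,x_i(t))$. The forcing must be dominated by a time-independent constant. Using the interpolation bound \eqref{max-1-1}, i.e. $|u_i-\mu_0|\le\tfrac{\sqrt3}{6}\mu_1$, the $\mu_0$-part is controlled by $\tfrac{2\sqrt3}{3}|\mu_0|\mu_1$, while the crucial step is to treat the $\kappa$-part by Young's inequality, converting the term linear in $\kappa$ into one quadratic in $\kappa$ of the form $16\kappa^2$ (plus a harmless multiple of $\mu_1^2$). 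This yields $\tfrac{dh}{dt}\le -\tfrac14 h^2+C$ with $C=16\kappa^2+\tfrac{2\sqrt3}{3}|\mu_0|\mu_1-\mu_1^2$. Passing to $\kappa^2$ is precisely what makes the admissible threshold \emph{linear} in $|\kappa|$, producing the $8|\kappa|$ of the hypothesis.

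It then remains to analyze the scalar inequality $\dot h\le -\tfrac14 h^2+C$. Writing $C=16\kappa^2+2C_1^2$ with $2C_1^2=\tfrac{2\sqrt3}{3}|\mu_0|\mu_1-\mu_1^2$ and using $\sqrt{a+b}\le\sqrt a+\sqrt b$, the blow-up threshold $-2\sqrt C$ is dominated by $-(8|\kappa|+2\sqrt2\,C_1)$. When $\tfrac{2\sqrt3}{3}|\mu_0|<\mu_1$ the quantity $2C_1^2$ is negative and may be discarded, leaving the cleaner threshold $-8|\kappa|$; when $\tfrac{2\sqrt3}{3}|\mu_0|\ge\mu_1$ one keeps the $C_1$-term, matching the definition of $C_1$ in the statement. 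In either case the hypothesis places $h(0)=m_1(0)+m_2(0)$ strictly below the threshold, so a barrier argument exactly as in Theorem \ref{t4.5} shows that $h(t)$ stays below it and in fact $\dot h\le -\tfrac18 h^2$ once $|h|$ is large enough, forcing $h(t)\to-\infty$ in finite time. Since $m_1\le m_2$ gives $m_1(t)\le\tfrac12 h(t)$, we conclude $\inf_{x}u_x(t,x)=m_1(t)\to-\infty$, and Proposition \ref{t4.2} delivers finite-time blow-up.

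I expect the main obstacle to be the constant bookkeeping in the two middle steps: arranging the Young inequality so that the $\kappa$-term produces exactly the coefficient $16\kappa^2$ (hence $8|\kappa|$), and so that the residual $\mu_1^2$ contributions assemble into the $\pm\mu_1^2$ that underlies the two definitions of $C_1$ and the dichotomy governed by the sign of $\tfrac{2\sqrt3}{3}|\mu_0|-\mu_1$. The qualitative mechanism—a Riccati inequality with positive constant forcing, defeated by a sufficiently negative initial extremal slope, in contrast to Theorem \ref{t4.3} where the forcing itself was made negative—is robust; the delicate part is keeping the estimate of the forcing sharp enough that the stated thresholds emerge with the clean constants rather than worse ones.
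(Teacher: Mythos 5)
Your opening step is sound: the exact identity $\frac{dm_i}{dt}=-\frac12 m_i^2+2(\mu_0+\kappa)\left(u(t,x_i(t))-\mu_0\right)-\frac12\mu_1^2$ a.e., via Lemma \ref{l2.3}, $u_{xx}(t,x_i(t))=0$ and \eqref{e4.0}, is exactly how the paper starts. But the proof breaks at the step you yourself flag as crucial: converting the $\kappa$-forcing into a constant $16\kappa^2$ by Young's inequality. After bounding $|u(t,x_i(t))-\mu_0|\le \frac{\sqrt3}{6}\mu_1$ via \eqref{max-1-1}, the $\kappa$-part of your forcing is $\frac{2\sqrt3}{3}|\kappa|\mu_1$, and any Young splitting $\frac{2\sqrt3}{3}|\kappa|\mu_1\le a\kappa^2+\frac{1}{3a}\mu_1^2$ leaves a residual proportional to $\mu_1^2$ that is \emph{not} harmless: the only negative quantity available to absorb it is $\frac{2\sqrt3}{3}|\mu_0|\mu_1-\mu_1^2=-2C_1^2$, and $C_1$ can be arbitrarily small while $\mu_1$ stays fixed. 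Concretely, take $\mu_1=1$, $\frac{2\sqrt3}{3}|\mu_0|=1-\epsilon$ with $\epsilon$ tiny (your first case), and $|\kappa|=10^{-2}$. The best constant forcing your scheme can produce is $C\ge\frac{2\sqrt3}{3}|\kappa|\mu_1-2C_1^2\approx 0.0115$, so your Riccati threshold is $-2\sqrt{C}\approx -0.215$, whereas the hypothesis only guarantees $m_1(0)+m_2(0)<-8|\kappa|=-0.08$: the datum $h(0)=-0.1$ satisfies the theorem's hypothesis but never enters your blow-up regime. The failure is structural, not bookkeeping: with constant forcing the admissible threshold scales like $-\sqrt{|\kappa|\mu_1}$ as $\kappa\to 0$, which is far more restrictive than $-8|\kappa|$, so no choice of Young constant can recover thresholds linear in $|\kappa|$.

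The paper avoids this by never turning the $\kappa$-term into a constant. It keeps that term in the form $2\kappa A^{-1}\partial_x^2 u$ (instead of merging it into $(\mu_0+\kappa)(u-\mu_0)$ via \eqref{formula-1.7}, which would give precisely your constant bound) and estimates it from \eqref{formula-1.5} by $2|\kappa|(m_2-m_1)$ --- linear in the unknowns. This yields \eqref{blow-up-4-3} and, after summing, an inequality for $h=m_1+m_2$ whose $\kappa$-contribution is $4|\kappa|(m_1+m_2)-8|\kappa| m_1$. The clean constant $8|\kappa|$ then emerges from a barrier argument: at a touching time where $m_1+m_2=-\alpha$ with $\alpha=8|\kappa|+\delta_0$, one has $m_1\le -\alpha/2$, and completing the square $-\frac12 m_1^2-8|\kappa| m_1=-\frac12\left(m_1+8|\kappa|\right)^2+32\kappa^2$ produces a $32\kappa^2$ that is dominated by the available $-4|\kappa|\alpha$ precisely because $\alpha>8|\kappa|$ (see the computation after \eqref{blow-up-4-5}). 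Finally, instead of running the Riccati argument on $h$, the paper feeds the resulting bound $m_1\le -\alpha/2<-4|\kappa|$ back into the $i=1$ inequality alone, obtaining $\frac{d}{dt}\left(m_1+4|\kappa|\right)<-\frac12\left(m_1+4|\kappa|\right)^2$ as in \eqref{blow-up-4-7}, hence finite-time blow-up of $m_1=\inf_x u_x$, and Proposition \ref{t4.2} finishes. To repair your proof you would need to import both devices --- the oscillation bound $2|\kappa|(m_2-m_1)$ and the completion-of-squares barrier --- since the constant-forcing Riccati skeleton you propose cannot reach the stated thresholds.
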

\begin{proof}
As discussed above, it suffices to consider the case $s=3$. In view of \eqref{one-deri-1},
 \eqref{2.1}, \eqref{2.2} and \eqref{formula-1.7}, together
with Remark \ref{rmk-a-opera-1} applied, we have
 \begin{equation}\label{blow-up-4-1-a}
 \begin{split}
u_{tx}&=-u^2_x-uu_{xx}-A^{-1}\partial^2_x (2u\mu_0+\frac1{2}u^2_x)-
2 \kappa A^{-1}\partial^2_x \,u
\\
&=-\frac1{2}u^2_x-uu_{xx}+2\mu_0(u-\mu_0)-\frac1{2}\mu_1^2-2\kappa
A^{-1}\partial^2_x \,  u.
\end{split}
\end{equation}
Thanks to \eqref{formula-1.5}, we obtain that
\begin{equation*}
\begin{split}
A^{-1}\partial^2_x   u &=(\frac{x^2}{2}-\frac{x}{2}+\frac{13}{12})
\mu(\partial^2_x u)+(x-\frac1{2})\int^1_0\int^y_0\partial^2_x u(s)
\,ds dy \\
&\qquad  +\int^1_0\int^y_x\int^s_0
\partial^2_x u(r)\, dr ds d y,
\end{split}
\end{equation*}
which implies
\begin{equation*}\label{blow-up-4-1-b}
\begin{split}
&|A^{-1}\partial^2_x   u |=\left|\frac{2x-1}{2}
\int^1_0\int^y_0\partial^2_x u(s) \,ds dy+\int^1_0\int^y_x\int^s_0
\partial^2_x u(r)\, dr ds d y\right|\\
&= \left|(x-\frac1{2} )\int^1_0(\partial_x u(y)-\partial_x u(0)) \,
dy+\int^1_0\int^y_x(\partial_x u(s)-\partial_x u(0))\, ds d y\right|
\\&\leq (m_2-m_1)\left(|x-\frac1{2}|+\int^1_0|y-x| d y\right)\\
&\leq (m_2-m_1)\left(|x-\frac1{2}|+x^2-x+\frac{1}{2}\right)\leq
m_2-m_1.
\end{split}
\end{equation*}
From this, together with \eqref{blow-up-4-1-a}, \eqref{max-1-1}, the
fact $u_{xx}(t, x_i(t))=0$ for a.e. $t \in [0, T)$, and Lemma
\ref{l2.3} applied, we deduce that
 \begin{equation}\label{blow-up-4-3}
\frac{d}{dt}m_{i}\leq-\frac1{2}m_{i}^2+\frac{\sqrt{3}}{3}|\mu_0|\mu_1-\frac1{2}\mu_1^2+2|\kappa|
(m_2-m_1), \quad i=1,\, 2.
\end{equation}
Summing up the above two inequalities gives
\begin{equation*}\label{blow-up-4-4}
\frac{d}{dt}(m_{1}+m_2)\leq
-\frac1{2}(m_{1}^2+m_2^2)+\mu_1\left(\frac{2\sqrt{3}}{3}|\mu_0|-\mu_1\right)+4|\kappa|
(m_2+m_1)-8|\kappa| m_1.
\end{equation*}
If $\frac{2\sqrt{3}}{3}|\mu_0|< \mu_1$, one has
\begin{equation}\label{blow-up-4-5}
\frac{d}{dt}(m_{1}+m_2)\leq -\frac1{2}(m_{1}^2+m_2^2)+4|\kappa|
(m_2+m_1)-8|\kappa| m_1-2C_1^2.
\end{equation}
Since $(m_1 + m_2)(0)  < -8|\kappa|$, there is $\delta_0\in(0,
\frac{1}{2}]$ such that
$
(m_1 + m_2)(0)  \leq -\alpha
$
with $\alpha=8|\kappa|+\delta_0>8|\kappa|$.

We first claim that there holds
\begin{equation*}(m_1 + m_2)(t)  \leq -\alpha \quad \mbox{for} \quad \forall \quad t
\in (0, T).
\end{equation*}  Indeed,
note that $\bar{m}(t) := (m_1 + m_2)(t) + \alpha$ is continuous on
$[0, T)$. If the above inequality does not hold, we can find a $t_0
\in (0, T)$ such that $\bar{m}(t_0) > 0$. Denote
\begin{equation*}
t_1=\max\{t<t_0|\quad \bar{m}(t_0)=0\}.
\end{equation*}
Then
\begin{equation}\label{blow-up-4-6}
\bar{m}(t_1)=0 \quad \mbox{and} \quad \frac{d}{dt}\bar{m}(t_1) \geq
0.
\end{equation}
While thanks to
\begin{equation*}
m_1(t_1)\leq \frac{1}{2}\bar{m}(t_1)-\frac{\alpha}{2}
=-\frac{\alpha}{2},
\end{equation*}
we get from \eqref{blow-up-4-5} that
\begin{equation*}
\begin{split}
 \frac{d}{dt}\bar{m}(t_1)=\frac{d}{dt}(m_1+m_2)(t_1) &<
-\frac1{2}(m_{1}^2+m_2^2)(t_1)+4|\kappa| (m_2+m_1)(t_1)-8|\kappa|
m_1(t_1)\\
&\leq -\frac1{2}m_{1}^2(t_1)-4|\kappa|\alpha-8|\kappa| m_1(t_1)\\
 &=
-\frac1{2}(m_{1}(t_1)+8|\kappa|)^2-4|\kappa|(\alpha-8|\kappa|)\leq
0.
\end{split}
\end{equation*}
This yields a contradiction with \eqref{blow-up-4-6}, and this completes  the
proof of the claim.

Putting the obtained estimate $m_1(t)\leq \frac{m_1(t) + m_2(t)}{2}
\leq -\frac{\alpha}{2}<-4|\kappa| $ back into \eqref{blow-up-4-3}
with $i=1$, we find
 \begin{equation}\label{blow-up-4-7}
 \begin{split}
\frac{d}{dt}(m_{1}(t)+4|\kappa|)&=\frac{d}{dt}m_{1}(t)\leq-\frac1{2}m_{1}^2-C_1^2+2|\kappa|
(m_2+m_1)-4|\kappa| m_1\\
&\leq-\frac1{2}m_{1}^2-C_1^2-2|\kappa| \alpha-4|\kappa| m_1\\
&\leq
-\frac1{2}(m_{1}+4|\kappa|)^2-2|\kappa|(\alpha-4|\kappa|)-C_1^2\\
&< -\frac{1}{2}(m_{1}+4|\kappa|)^2 \quad \mbox{for a.e.} \quad  t
\in (0, T),
 \end{split}
\end{equation}
which implies $m_{1}(t) +4|\kappa|<0$ on $(0, T)$. From this and the
fact that $m_{1}(t) +4|\kappa|$ is locally Lipshitz on $(0, T)$, we
see that $\frac{1}{m_{1}(t) +4|\kappa|}$ is also Lipshitz on $(0,
T)$. Being locally Lipshitz, the $\frac{1}{m_{1}(t)+4|\kappa|}$ is
absolutely continuous on $(0, T)$, it is then inferred from
\eqref{blow-up-4-7} that
 \begin{equation*}\label{blow-up-4-8}
 \begin{split}
\frac{d}{dt}\left(\frac{1}{m_{1}(t) +4|\kappa|}\right)\geq
\frac{1}{2} \quad \mbox{for a.e.} \quad  t \in (0, T).
 \end{split}
\end{equation*}
Therefore, we get
 \begin{equation*}\label{blow-up-4-9}
 \begin{split}
m_{1}(t)\leq
\frac{2\left(m_{1}(0)+4|\kappa|\right)}{2+\left(m_{1}(0)+4|\kappa|\right)
t}-4|\kappa| \quad \mbox{for a.e.} \quad  t \in (0, T),
 \end{split}
\end{equation*}
which implies that the life-span $T \leq  \frac{-2}{
m_{1}(0)+4|\kappa|}$.

On the other hand, if $\frac{2\sqrt{3}}{3}|\mu_0|\geq \mu_1$, we
find from \eqref{blow-up-4-3} that
 \begin{equation}\label{blow-up-4-10}
\frac{d}{dt}m_{i}\leq-\frac{1}{2}m_{i}^2+2|\kappa| (m_2-m_1)+C_1^2,
\quad i=1,\, 2.
\end{equation}
Summing up the above two inequalities gives
\begin{equation}\label{blow-up-4-11}
\frac{d}{dt}(m_{1}+m_2)\leq -\frac1{2}(m_{1}^2+m_2^2)+4|\kappa|
(m_2+m_1)-8|\kappa| m_1+2C_1^2.
\end{equation}
Since $(m_1 + m_2)(0)  < -8|\kappa|-2\sqrt{2}C_1$, then there is
$\delta_0\in(0, \frac{1}{2}]$ such that $ (m_1 + m_2)(0)  \leq
-\alpha -2\sqrt{2}(1+\delta_0)C_1$ with
$\alpha=8|\kappa|+\delta_0>8|\kappa|$.

Again we first claim that there holds for all $t \in (0, T)$
\begin{equation*}\label{blow-up-4-12}(m_1 + m_2)(t)  \leq
-\alpha -2\sqrt{2}(1+\delta_0)C_1.
\end{equation*} Indeed, similar to the argument above,
note that $\bar{m}(t) := (m_1 + m_2)(t)
+\alpha+2\sqrt{2}(1+\delta_0)C_1$ is continuous on $[0, T)$. If the
above inequality does not hold, we can find a $t_0 \in (0, T)$ such
that $\bar{m}(t_0) \geq 0$. Denote
\begin{equation*}
t_1=\max\{t<t_0|\quad \bar{m}(t_0)=0\}.
\end{equation*}
Then
\begin{equation}\label{blow-up-4-13}
\bar{m}(t_1)=0 \quad \mbox{and} \quad \frac{d}{dt}\bar{m}(t_1) \geq
0.
\end{equation}
While thanks to
\begin{equation*}
m_1(t_1)\leq
\frac{1}{2}\bar{m}(t_1)-\frac{\alpha}{2}-\sqrt{2}(1+\delta_0)C_1=-\frac{\alpha}{2}-\sqrt{2}(1+\delta_0)C_1
\end{equation*}
and
\begin{equation*}
m_2(t_1)=\bar{m}(t_1)-\alpha-2\sqrt{2}(1+\delta_0)C_1-m_1(t_1)=-\alpha-2\sqrt{2}(1+\delta_0)C_1-m_1(t_1),
\end{equation*}
we get from \eqref{blow-up-4-11} that
\begin{equation*}
\begin{split}
& \frac{d}{dt}\bar{m}(t_1)\leq
-\frac1{2}(m_{1}^2+m_2^2)(t_1)+4|\kappa| (m_2+m_1)(t_1)-8|\kappa|
m_1(t_1)+2C_1^2\\
&=
-\frac1{2}m_{1}^2(t_1)-\frac1{2}\left(m_{1}(t_1)+\alpha+2\sqrt{2}(1+\delta_0)C_1\right)^2\\
&\qquad\qquad \qquad-4|\kappa|
\left(\alpha+2\sqrt{2}(1+\delta_0)C_1\right) -8|\kappa|
m_1(t_1)+2C_1^2\\
&=
-\frac{1}{4}\left(2m_{1}(t_1)+\alpha+2\sqrt{2}(1+\delta_0)C_1+8|\kappa|\right)^2
+\frac{1}{4}\left(\alpha+2\sqrt{2}(1+\delta_0)C_1+8|\kappa|\right)^2\\
&\qquad\qquad \qquad
-\frac1{2}\left(\alpha+2\sqrt{2}(1+\delta_0)C_1\right)^2+2C_1^2
-4|\kappa| \left(\alpha+2\sqrt{2}(1+\delta_0)C_1\right),
\end{split}
\end{equation*}
which together with the fact $\alpha>8|\kappa|$ implies
\begin{equation*}
\begin{split}
 \frac{d}{dt}\bar{m}(t_1)
&\leq
\frac{1}{4}\left(\alpha+2\sqrt{2}(1+\delta_0)C_1+8|\kappa|\right)^2
-\frac1{2}\left(\alpha+2\sqrt{2}(1+\delta_0)C_1\right)^2\\
& \qquad +2C_1^2 -4|\kappa|
\left(\alpha+2\sqrt{2}(1+\delta_0)C_1\right)\\
&=-\frac1{4}\left(\alpha+2\sqrt{2}(1+\delta_0)C_1\right)^2+2C_1^2+16|\kappa|^2<0.
\end{split}
\end{equation*}
This yields a contradiction with \eqref{blow-up-4-13}, and
the proof of the claim is complete.

Therefore, $m_1(t)\leq \frac{m_1(t) + m_2(t)}{2} \leq - \frac{\alpha
}{2}-\sqrt{2}(1+\delta_0)C_1<-4|\kappa|-\sqrt{2}(1+\delta_0)C_1$
back into \eqref{blow-up-4-10} with $i=1$, we find for all $ t \in
(0, T)$
 \begin{equation*}\label{blow-up-4-14}
 \begin{split}
&\frac{d}{dt}\left(m_{1}(t)+4|\kappa|\right)=\frac{d}{dt}m_{1}(t)\leq-\frac1{2}m_{1}^2+C_1^2+2|\kappa|
(m_2+m_1)-4|\kappa| m_1\\
&\leq-\frac1{2}m_{1}^2+C_1^2-2|\kappa| \left(\alpha+2\sqrt{2}(1+\delta_0)C_1\right)-4|\kappa| m_1\\
&= -\frac1{2}(m_{1}+4|\kappa|)^2+C_1^2-2|\kappa|
\left(\alpha-4|\kappa|+2\sqrt{2}(1+\delta_0)C_1\right) \\
&\leq -\frac{\delta_0}{2(1+\delta_0)}(m_{1}+4|\kappa|)^2 ,
 \end{split}
\end{equation*}
which implies $m_{1}(t) < -4|\kappa|-\sqrt{2}(1+\delta_0)C_1$ on
$(0, T)$. From this and the fact that $m_1(t)$ is locally Lipshitz
on $(0, T)$, we see that $\frac{1}{m_{1}(t)+4|\kappa|}$ is also
Lipshitz on $(0, T)$. Being locally Lipshitz, the
$\frac{1}{m_{1}(t)+4|\kappa|}$ is absolutely continuous on $(0, T)$,
it is then inferred from \eqref{blow-up-4-7} that
 \begin{equation*}
 \begin{split}
\frac{d}{dt}\left(\frac{1}{m_{1}(t)+4|\kappa|}\right)\geq
\frac{\delta_0}{2(1+\delta_0)} \quad \mbox{for \, a.e.} \quad  t \in
(0, T).
 \end{split}
\end{equation*}
Therefore, we again get
 \begin{equation*}
 \begin{split}
m_{1}(t)\leq
\frac{2(1+\delta_0)(m_{1}(0)+4|\kappa|)}{2(1+\delta_0)+\delta_0\,(m_{1}(0)+4|\kappa|)
t}-4|\kappa| \quad \mbox{for \, a.e.} \quad  t \in (0, T),
 \end{split}
\end{equation*}
which implies that the life-span $T \leq - \frac{2(1+\delta_0)}{
\delta_0 \, (m_{1}(0)+4|\kappa|)}$. This completes the proof of
Theorem \ref{thm-blow-4}.
\end{proof}

\begin{rmk}\label{rmk-blow-differ-1}
Theorem \ref{thm-blow-4} does not overlap with Theorems \ref{t4.3},
or Theorem \ref{t4.5},  which may be easily
verified when we consider the two cases, $\mu_1 \gg |\kappa|
\thicksim |\mu_0|$ and $\mu_1 \ll |\kappa| \thicksim |\mu_0|$
respectively.
\end{rmk}

Using the conserved quantities $H_2$, we can derive the following
wave-breaking result.
\begin{thm}\label{t4.6}
Let $u_0\in H^s(\mathbb{S}),  s > 3/2$ and $T>0$ be the maximal time
of existence of the corresponding solution $u(t,x)$ to \eqref{e1.4}
with the initial data $u_0$. If
\begin{equation}\label{assumption-1-1}
(\mu_0+\kappa)H_2 <\frac{1}{8}\mu_1^4
+\frac{1}{2}\mu_0(\mu_0+\kappa)(2\mu_0^2+\mu_1^2), \quad
\mu_0(\mu_0+\kappa)\geq 0, \quad \mbox{or}
\end{equation}
\begin{equation}\label{assumption-1-2}
(\mu_0+\kappa)H_2<\frac{1}{8}\mu_1^4
+\frac{1}{2}\mu_0(\mu_0+\kappa)\left(2\mu_0^2+(1+\frac{1}{2\pi^2})\mu_1^2\right),
\quad  \mu_0(\mu_0+\kappa) < 0,
\end{equation}
where $\mu_0$, $\mu_1$ are defined in \eqref{2.1} and \eqref{2.2},
then the corresponding solution $u(t,x)$ to \eqref{e1.4} blows up in
finite time $T$ with
\begin{equation*}
0<T\le6+\dfrac{1+\left|\int_{\mathbb{S}}u_{0x}^3(x)\;d x\right|}
{\frac{3}{2}\mu_1^4+6\mu_0(\mu_0+\kappa)(\mu_1^2+2\mu_0^2)-12(\mu_0+\kappa)H_2},
\quad \mbox{if}\quad \mu_0(\mu_0+\kappa)\geq 0
\end{equation*} or
\begin{equation*}0<T\le6+\dfrac{1+\left|\int_{\mathbb{S}}u_{0x}^3(x)\;d x\right|}
{\frac{3}{2}\mu_1^4+\mu_0(\mu_0+\kappa)\left(\left(6+\frac{3}{\pi^2}\right)\mu_1^2+12\mu_0^2\right)-12(\mu_0+\kappa)H_2},
\quad \mbox{if}\quad \mu_0(\mu_0+\kappa)< 0
\end{equation*} such that
\begin{equation*}\liminf\limits_{t\uparrow T}\left(\inf\limits_{x\in\mathbb{S}}u_x(t,x)\right)
=-\infty.
\end{equation*}
\end{thm}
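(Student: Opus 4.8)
The plan is to mimic the Riccati-type blow-up argument already used for Theorem~\ref{t4.3}, but to feed the conserved quantity $H_2$ into the estimate of the cross term rather than the crude bound of Lemma~\ref{l2.2} alone; this is what trades the hypothesis $(\sqrt3/\pi)|\mu_0+\kappa|<\mu_1$ for the energy conditions \eqref{assumption-1-1}--\eqref{assumption-1-2}. By Remark~\ref{rmk-index-1} it again suffices to take $s=3$, so all the manipulations below are legitimate for classical solutions. I set $V(t)=\int_{\mathbb{S}}u_x^3\,dx$ and, exactly as in the derivation of \eqref{e3.1} (differentiate the first equation of \eqref{e1.4} in $x$, multiply \eqref{e4.0} by $3u_x^2$, and integrate over $\mathbb{S}$), I obtain
\[
\frac{d}{dt}V(t)=-\tfrac12\int_{\mathbb{S}}u_x^4\,dx-\tfrac32\mu_1^4+6(\mu_0+\kappa)\int_{\mathbb{S}}(u-\mu_0)u_x^2\,dx .
\]
The whole game is then to bound the last cross term from above by a constant.

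Here I would use $H_2$. Since $\mu(u)\equiv\mu_0$ is conserved, $H_2=(\mu_0+\kappa)\int_{\mathbb{S}}u^2\,dx+\tfrac12\int_{\mathbb{S}}uu_x^2\,dx$, and subtracting $\mu_0\mu_1^2$ gives
\[
\int_{\mathbb{S}}(u-\mu_0)u_x^2\,dx=2H_2-2(\mu_0+\kappa)\int_{\mathbb{S}}u^2\,dx-\mu_0\mu_1^2 .
\]
Writing $\int_{\mathbb{S}}u^2\,dx=\|u-\mu_0\|_{L^2}^2+\mu_0^2$ isolates the only time-dependent quantity, $\|u-\mu_0\|_{L^2}^2$, with a prefactor whose effective sign is controlled by $\mu_0(\mu_0+\kappa)$. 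This is precisely where the stated dichotomy enters: when $\mu_0(\mu_0+\kappa)\ge0$ that contribution has the favourable sign and may simply be discarded, producing the first denominator and the condition \eqref{assumption-1-1}; when $\mu_0(\mu_0+\kappa)<0$ it has the wrong sign and must be bounded from above through the sharp periodic Poincaré inequality of Lemma~\ref{l2.2}, namely $\|u-\mu_0\|_{L^2}^2\le\frac1{4\pi^2}\mu_1^2$ (cf.\ \eqref{max-1-2}), and the factor $12\cdot\frac1{4\pi^2}=\frac3{\pi^2}$ is exactly what accounts for the extra $\frac3{\pi^2}\mu_0(\mu_0+\kappa)\mu_1^2$ in the second denominator. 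In either case one reaches
\[
\frac{d}{dt}V(t)\le-\tfrac12\int_{\mathbb{S}}u_x^4\,dx-c_2 ,
\]
where $c_2$ is the denominator $D$ displayed in the theorem, and the hypotheses \eqref{assumption-1-1}--\eqref{assumption-1-2} are precisely the requirement $c_2>0$.

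Finally I would close the argument as in Theorem~\ref{t4.3}. By H\"older's inequality on the unit circle, $\int_{\mathbb{S}}u_x^4\,dx\ge\big(\int_{\mathbb{S}}u_x^3\,dx\big)^{4/3}$, so $V$ satisfies $\frac{d}{dt}V\le-\tfrac12V^{4/3}-c_2\le-c_2<0$; thus $V$ is strictly decreasing, and comparison with the scalar ODE forces $V$ to reach $-\infty$ in finite time $T\le t_1+3/c_1$ with $c_1=\tfrac12$ and $t_1=(1+|V(0)|)/c_2$, which yields the stated $6+\dots$ life-span bounds (the $6$ being $3/c_1$). Since $V(t)=\int_{\mathbb{S}}u_x^3\,dx\ge\mu_1^2\inf_{x\in\mathbb{S}}u_x(t,x)$, blow-up of $V$ forces $\liminf_{t\uparrow T}\inf_{x\in\mathbb{S}}u_x(t,x)=-\infty$, i.e.\ wave-breaking in the sense of Proposition~\ref{t4.2}. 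I expect the main obstacle to be the bookkeeping in the middle step: after the $H_2$-substitution one must verify, case by case, that the constant produced by discarding or Poincar\'e-bounding $\|u-\mu_0\|_{L^2}^2$ is exactly the stated denominator and is strictly positive under \eqref{assumption-1-1}--\eqref{assumption-1-2}. Getting the sign of the $\mu_0(\mu_0+\kappa)$ factor and the precise role of the energy $H_2$ right is the only delicate point; the rest is the same comparison argument as in Theorem~\ref{t4.3}.
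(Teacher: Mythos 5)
Your skeleton is indeed the paper's: the identity \eqref{e3.1}, an $H_2$-substitution for the cross term $6(\mu_0+\kappa)\int_{\mathbb{S}}(u-\mu_0)u_x^2\,dx$, a sign dichotomy, then H\"older and the same ODE comparison as in Theorem \ref{t4.3} (with $c_1=\tfrac12$, whence the $6$). But the step you defer as ``bookkeeping'' is exactly where the argument fails, and it fails for a concrete algebraic reason. Your displayed identity
\begin{equation*}
\int_{\mathbb{S}}(u-\mu_0)u_x^2\,dx=2H_2-2(\mu_0+\kappa)\int_{\mathbb{S}}u^2\,dx-\mu_0\mu_1^2
\end{equation*}
is correct for the conserved quantity $H_2=(\mu_0+\kappa)\int_{\mathbb{S}}u^2\,dx+\tfrac12\int_{\mathbb{S}}uu_x^2\,dx$. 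However, after multiplying by $6(\mu_0+\kappa)$, the only time-dependent quantity enters as $-12(\mu_0+\kappa)^2\,\|u-\mu_0\|_{L^2}^2$: its prefactor is $-12(\mu_0+\kappa)^2\le 0$, always favourable, and emphatically \emph{not} of a sign governed by $\mu_0(\mu_0+\kappa)$. So the dichotomy you describe never arises from your identity, and the constants do not come out as stated: discarding the favourable term (equivalently, Jensen's bound $\int_{\mathbb{S}}u^2\,dx\ge\mu_0^2$) yields blow-up under
\begin{equation*}
(\mu_0+\kappa)H_2<\tfrac18\mu_1^4+\tfrac12\mu_0(\mu_0+\kappa)\mu_1^2+\mu_0^2(\mu_0+\kappa)^2,
\end{equation*}
with a denominator containing $12\mu_0^2(\mu_0+\kappa)^2$ where the theorem has $12\mu_0^3(\mu_0+\kappa)$; the two differ by $12\kappa\mu_0^2(\mu_0+\kappa)$. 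When $\kappa\mu_0^2(\mu_0+\kappa)\ge 0$ this discrepancy is harmless (your condition is implied by \eqref{assumption-1-1} and your life-span bound is sharper), but when $\kappa<0<\mu_0+\kappa$ --- which is compatible with $\mu_0(\mu_0+\kappa)\ge0$ and with \eqref{assumption-1-1} --- the stated hypothesis no longer guarantees that your constant $c_2$ is positive, and the Riccati comparison cannot close. As written, the proposal therefore proves a variant of the theorem with different hypotheses and denominators, not the statement itself.

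The comparison with the paper explains the mismatch. The paper reaches the stated constants because in \eqref{blow-up-2-0} it substitutes $6(\mu_0+\kappa)\int_{\mathbb{S}}uu_x^2\,dx=12(\mu_0+\kappa)H_2-12\mu_0(\mu_0+\kappa)\int_{\mathbb{S}}u^2\,dx$, i.e.\ it computes as if $H_2=\mu_0\int_{\mathbb{S}}u^2\,dx+\tfrac12\int_{\mathbb{S}}uu_x^2\,dx$, with the $\kappa u^2$ term dropped --- inconsistent, when $\kappa\neq0$, with the $H_2$ recalled two lines earlier (and with the actual conserved quantity, which is the one you wrote). With that coefficient $-12\mu_0(\mu_0+\kappa)\int_{\mathbb{S}}u^2\,dx$, everything in your narrative goes through verbatim: the dichotomy on $\mu_0(\mu_0+\kappa)$, Jensen in the first case, the Poincar\'e bound $\|u-\mu_0\|_{L^2}^2\le\frac{1}{4\pi^2}\mu_1^2$ of Lemma \ref{l2.2} (cf.\ \eqref{max-1-2}, \eqref{e2.3}) in the second, the factor $\frac{3}{\pi^2}$, and the two stated denominators. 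In short, your algebra and the paper's constants are mutually incompatible: you cannot simultaneously use the correct $H_2$-identity and obtain the theorem's denominators. A complete proof must either rework the statement's hypotheses and constants (the natural completion of your identity) or justify the paper's substitution, which for $\kappa\neq0$ is not an identity; asserting that the bookkeeping produces ``exactly the stated denominator'' without verifying it is precisely the gap.
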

\begin{proof}
Again it suffices to consider the case $s=3$. Recall that
\begin{equation*}H_2=\int_{\Bbb S}\left (\mu_0u^2+\kappa u^2+\frac1{2}uu_x^2 \right )d x
\end{equation*}
is independent of time $ t$. In view of  (\ref{e3.1}), we obtain
\begin{equation}\label{blow-up-2-0}
\begin{split}
 \dfrac{d}{d t}\int_{\mathbb{S}}u_x^3\;d x&=-\frac1{2}\int_{\mathbb{S}}u_x^4\;d x
 -\frac{3}{2}\mu_1^4
 +6(\mu_0+\kappa)\int_{\mathbb{S}}uu_x^2\;d x\\
 &\qquad \qquad \qquad \qquad\qquad -6(\mu_0+\kappa)\mu_0\int_{\mathbb{S}}u_x^2\;d x\\
 &=-\frac1{2}\int_{\mathbb{S}}u_x^4\;d x-\frac{3}{2}\mu_1^4+12(\mu_0+\kappa)H_2
-6\mu_0(\mu_0+\kappa)\mu_1^2\\
 &\qquad \qquad \qquad \qquad\qquad-12\mu_0(\mu_0+\kappa)\int_{\mathbb{S}}u^2\;d x.
\end{split}
\end{equation}
If $\mu_0(\mu_0+\kappa)\geq 0$, it then follows from H\"{o}lder's
inequality that
$$\mu_0(\mu_0+\kappa)\int_{\mathbb{S}}u^2\;d x \geq
\mu_0(\mu_0+\kappa)(\int_{\mathbb{S}}u\;d
x)^{2}=\mu_0^3(\mu_0+\kappa).$$ Hence, we have
\begin{equation}\label{blow-up-2-1}
\begin{split}
&\frac{3}{2}\mu_1^4-12(\mu_0+\kappa)H_2
+6\mu_0(\mu_0+\kappa)\mu_1^2+12\mu_0(\mu_0+\kappa)\int_{\mathbb{S}}u^2\;d
x\\
&\geq
\frac{3}{2}\mu_1^4+6\mu_0(\mu_0+\kappa)(\mu_1^2+2\mu_0^2)-12(\mu_0+\kappa)H_2=:C_0
\end{split}
\end{equation}
Thanks to the assumption \eqref{assumption-1-1}, we get $C_0>0$.

On the other hand, if $\mu_0(\mu_0+\kappa) < 0$, we get from
 \eqref{e2.3} that
\begin{equation*}
\begin{split}\mu_0(\mu_0+\kappa)\int_{\mathbb{S}}u^2\;d x
\geq
\mu_0(\mu_0+\kappa)\left(\frac{1}{4\pi^2}\mu_1^2+\mu_0^2\right).
\end{split}
\end{equation*}
It then follows that
\begin{equation}\label{blow-up-2-2}
\begin{split}
&\frac{3}{2}\mu_1^4-12(\mu_0+\kappa)H_2
+6\mu_0(\mu_0+\kappa)\mu_1^2+12\mu_0(\mu_0+\kappa)\int_{\mathbb{S}}u^2\;d
x\\
&\geq
\frac{3}{2}\mu_1^4+\mu_0(\mu_0+\kappa)\left((6+\frac{3}{\pi^2})\mu_1^2+12\mu_0^2\right)
-12(\mu_0+\kappa)H_2=:C_0
\end{split}
\end{equation}
Thanks to the assumption \eqref{assumption-1-1}, we also get
$C_0>0$.

In view of  \eqref{blow-up-2-0}-\eqref{blow-up-2-2},
together with H\"{o}lder's inequality applied, we deduce that
 $$\frac{d}{d t}\int_{\mathbb{S}}u_x^3\;d x\le -\frac{1}{2}\int_{\mathbb{S}}u_x^4\;d x-C_0\le
 -\frac{1}{2}\left (\int_{\mathbb{S}}u_x^3\;d x\right )^{\frac{4}{3}}-C_0.$$
Define  $V(t)=\int_{\mathbb{S}}u_x^3(t, x)\ d x$ with $ t \in [0,
T).$ It is clear that
 $$\dfrac{d}{d t}V(t)\le-\frac{1}{2}(V(t))^{\frac{4}{3}}-C_0\le-C_0<0,\quad t\in[0,T).$$
 Let $t_1=(1+|V(0)|)/C_0$. Then following the proof of Theorem
\ref{t4.3},  we have
 $$T\le t_1+6<+\infty.$$
This implies the desired result as in Theorem \ref{t4.6}.
\end{proof}

\setcounter{equation}{0}
%%%%%%%%%%%%%%%%%%%%%%%%%%%%%%%%%%%%%%%%%%%%%%
%%%%%%%%%%%%%%%%%%%%%%%%%%%%%%%%%%%%%%%%%%

\section{Existence of global strong solution}

In this section, attention is now turned to specifying conditions under which the local
strong solution to the initial-value problem \eqref{e1.1} can be extended to a global one.
\begin{thm}\label{thm-gss-1}
If the initial potential $m_0 \in H^1(\mathbb{S})$ satisfies that
$m_0+\kappa $ does not change the sign, then the solution $ u(t) $
to the initial-value problem \eqref{e1.1} exists permanently in
time.
\end{thm}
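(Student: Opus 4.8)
The plan is to use the blow-up criterion (Proposition \ref{t4.2}) contrapositively: to prove global existence it suffices to bound $\inf_{x}u_x(t,x)$ from below on any finite time interval, i.e. to rule out $\liminf_{t\uparrow T}\inf_x u_x = -\infty$ for finite $T$. The central tool is Lemma \ref{lem-diff-1}, which tells us that the quantity $(m(t,q(t,x))+\kappa)q_x^2(t,x) = m_0(x)+\kappa$ is conserved along the flow $q$. By Remark \ref{rmk-diff-1}, if $m_0+\kappa$ does not change sign, then $m(t,\cdot)+\kappa$ keeps that same sign for all $t$ in the existence interval. This sign-definiteness is what I expect to convert into the needed pointwise control on $u_x$.

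First I would assume without loss of generality that $m_0+\kappa\ge 0$ everywhere (the case $\le 0$ being symmetric). The key step is to relate $u_x$ to $m=\mu_0-u_{xx}$ and to the already-established a priori bounds. Recall from Section 2 that $\|u(t,\cdot)\|_{L^\infty}$ is bounded by \eqref{e2.2} and $\|u_x(t,\cdot)\|_{L^2}=\mu_1$ is conserved by \eqref{2.2}; moreover $H^1\hookrightarrow L^\infty$ gives a uniform bound on $u$ itself. The plan is to estimate $|u_x(t,x)|$ by writing it through the Green-function representation \eqref{formula-1.5} applied to $m$, or more directly by integrating the sign-definite quantity $m+\kappa\ge 0$. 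Since $u_x(t,\cdot)$ has zero mean on $\mathbb{S}$ and $u_{xx}=\mu_0-m$, the sign condition $m+\kappa\ge 0$ forces $u_{xx}\le \mu_0+\kappa$, which bounds $u_{xx}$ from above by a constant; combined with $\int_{\mathbb{S}}u_{xx}\,dx=0$ and the $L^2$ control on $u_x$, this should yield a one-sided bound preventing $u_x$ from going to $-\infty$.

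Concretely, I would argue as follows. Writing $w(t)=\inf_x u_x(t,x)=u_x(t,\xi(t))$ and using Lemma \ref{l2.3} together with \eqref{e4.0}, at the minimizing point $u_{xx}(t,\xi(t))=0$, so $\frac{d}{dt}w(t) = -\tfrac12 w^2(t)+2(\mu_0+\kappa)[u(t,\xi(t))-\mu_0]-\tfrac12\mu_1^2$. The troublesome term is $-\tfrac12 w^2(t)$, which drives blow-up; the sign condition on $m+\kappa$ must be used to supply a compensating lower bound. The cleaner route is to bound $w(t)$ directly: because $m+\kappa\ge 0$ and $q_x>0$, the conserved relation gives $m(t,q(t,x))+\kappa = (m_0(x)+\kappa)/q_x^2\ge 0$, and integrating $u_{xx}=\mu_0-m$ along with the structure of the equation should produce a time-independent lower bound on $u_x$.

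The hard part will be extracting a \emph{uniform-in-}$t$ lower bound on $u_x$ from the sign information, since the conserved quantity involves the unknown Jacobian $q_x$ which can degenerate. The delicate point is controlling $q_x$ (equivalently $\int_0^t u_x(s,q)\,ds$) without circularity: one cannot bound $u_x$ using $q_x$ and simultaneously bound $q_x$ using $u_x$. I expect the resolution to rest on the one-sided sign of $m+\kappa$ giving a one-sided (upper or lower, depending on the sign) a priori bound on $u_x$ that is self-consistent, i.e. controls $q_x$ from the side that matters, so that a Gronwall-type argument closes. Once a finite lower bound for $\inf_x u_x(t,x)$ holds on every bounded time interval, Proposition \ref{t4.2} immediately forbids finite-time blow-up and yields the global solution.
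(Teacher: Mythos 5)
You set up the proof exactly as the paper does --- global existence via the blow-up criterion of Proposition \ref{t4.2}, with the sign of $m+\kappa$ propagated in time by Lemma \ref{lem-diff-1} and Remark \ref{rmk-diff-1} --- and your second paragraph even records the decisive pointwise fact that $m+\kappa\ge 0$ forces $u_{xx}\le \mu_0+\kappa$. But the proof is never closed. What you defer as ``the hard part'' (extracting a uniform-in-$t$ lower bound on $u_x$ from the sign information) is precisely the content of the theorem, and the difficulty you diagnose there --- that the conserved quantity $\left(m(t,q)+\kappa\right)q_x^2=m_0+\kappa$ involves a Jacobian $q_x$ that could degenerate, so that some Gronwall-type self-consistency argument is needed --- is a misreading of the situation. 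No quantitative control of $q_x$ is required anywhere: Lemma \ref{lem-diff-1} already guarantees $q_x>0$ on the whole existence interval, and positivity is the \emph{only} property of $q_x$ ever used, namely to conclude that $m(t,\cdot)+\kappa\ge 0$ (resp. $\le 0$) for all $t<T$. After that, the flow $q$ plays no further role.

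The missing step is a one-line integration in $x$ at each fixed time, and it uses only periodicity --- not the $L^2$ conservation of $u_x$, not Lemma \ref{l2.3}, and not the ODE \eqref{e4.0} at the minimum point. Since $\int_{\mathbb{S}}u_x(t,x)\,dx=0$, there is a point $\xi(t)$ with $u_x(t,\xi(t))=0$. Writing $-u_{xx}=m-\mu_0=(m+\kappa)-(\mu_0+\kappa)$ and integrating from $\xi(t)$ to $x\in[\xi(t),\xi(t)+1]$ gives
\begin{equation*}
-u_x(t,x)=\int_{\xi(t)}^{x}\left(m(t,y)+\kappa\right)dy-(\mu_0+\kappa)\left(x-\xi(t)\right).
\end{equation*}
If $m+\kappa\ge 0$, the first term is nondecreasing in $x$, hence at most $\int_{\mathbb{S}}\left(m+\kappa\right)dy=\mu_0+\kappa$, so that $-u_x(t,x)\le(\mu_0+\kappa)\left(1-x+\xi(t)\right)\le|\mu_0+\kappa|$; if $m+\kappa\le 0$, the first term is nonpositive and again $-u_x(t,x)\le-(\mu_0+\kappa)\left(x-\xi(t)\right)\le|\mu_0+\kappa|$. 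Thus $\inf_{x\in\mathbb{S}}u_x(t,x)\ge-|\mu_0+\kappa|$ uniformly in $t$, and Proposition \ref{t4.2} rules out finite-time blow-up. In short: your framework and your sign observation are the paper's, but the crucial estimate is left as an expectation, and the obstruction you planned to fight (circularity through $q_x$) does not exist.
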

\begin{proof}
Let $T$ be the maximal time of existence of the solution $u$ to
\eqref{e1.4} with the initial data $u_0$, guaranteed by Proposition
\ref{p2.1}.

Assume $m_0 +\kappa \geq 0$. We prove that the solution $u(t, x)$
exists globally in time. Indeed, thanks to Lemma \ref{lem-diff-1}
and Remark \ref{rmk-diff-1}, we find $m(t)+\kappa\geq 0$ on $[0, T)
\times \mathbb{S}$. Given $t \in [0, T)$, by the periodicity in the
$x$-variable, there is a $\xi(t) \in (0, 1)$ such that $u_x(t,
\xi(t))=0$. Therefore, for $x \in [\xi(t), \xi(t)+1]$ we have
\begin{equation*}
\begin{split}
-u_x(t, x)=-\int_{\xi(t)}^{x}\partial_x^2\,u(t, x)\,
dx=\int_{\xi(t)}^{x}\left(m(t, x)+\kappa\right)\,
dx-\int_{\xi(t)}^{x}[\mu(u)+\kappa]\,dx,
\end{split}
\end{equation*}
which leads to
\begin{equation}\label{lower-bound-1}
\begin{split}
&-u_x(t, x)\leq \int_{\xi(t)}^{\xi(t)+1}\left(m(t,
x)+\kappa\right)\,
dx-(\mu_0+\kappa)(x-\xi(t))\\
&= \int_{\mathbb{S}}\left(m_0+\kappa\right)\,
dx-(\mu_0+\kappa)(x-\xi(t))=(\mu_0+\kappa)(1-x+\xi(t)) \leq
|\mu_0+\kappa|.
\end{split}
\end{equation}

On the other hand, if $m_0 +\kappa \leq 0$, then $m(t)+\kappa\leq 0$
on $[0, T) \times \mathbb{S}$. Using the same notation as above, we
find that
\begin{equation}\label{lower-bound-2}
\begin{split}
&-u_x(t, x)=-\int_{\xi(t)}^{x}\partial_x^2\,u(t, x)\,
dx=\int_{\xi(t)}^{x}[m(t, x)+\kappa]\,
dx-\int_{\xi(t)}^{x}[\mu(u)+\kappa]\,dx\\
&\leq -(\mu_0+\kappa)(x-\xi(t))\leq |\mu_0+\kappa|.
\end{split}
\end{equation}
From \eqref{lower-bound-1} and \eqref{lower-bound-2}, we deduce that
$u$ exists permanently as a consequence of Proposition \ref{t4.2}.
\end{proof}

\begin{thm}\label{thm-gss-2}
If the initial profile $u_0 \in H^3(\mathbb{S})$ is such that
\begin{equation}\label{global-assump-1}
\|\partial_x^3 u_0\|_{L^2} \leq 2\sqrt{3} |\mu_0+\kappa|,
\end{equation} then the initial-value problem \eqref{e1.4} admits global solutions in time.
\end{thm}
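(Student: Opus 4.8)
The plan is to reduce everything to Theorem~\ref{thm-gss-1}: I will show that the smallness hypothesis \eqref{global-assump-1} forces $m_0+\kappa$ to keep a constant sign, after which global existence follows immediately from the conclusion of that theorem. The only real work is to convert the $L^2$ bound on $\partial_x^3 u_0$ into an $L^\infty$ bound on $u_{0xx}$ comparable to $|\mu_0+\kappa|$.

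The key observation is that $u_{0xx}$ has zero average over $\mathbb{S}$ by periodicity, since $\int_{\mathbb{S}} u_{0xx}\,dx = 0$. Hence I would apply the sharp mean-zero inequality \eqref{e2.1} of Remark~\ref{rmk-2.1} to $f = u_{0xx}$, whose derivative is $f_x = \partial_x^3 u_0$. This gives
\begin{equation*}
\|u_{0xx}\|_{L^\infty}^2 = \max_{x\in\mathbb{S}} (u_{0xx})^2 \leq \frac{1}{12}\int_{\mathbb{S}} (\partial_x^3 u_0)^2\,dx = \frac{1}{12}\|\partial_x^3 u_0\|_{L^2}^2,
\end{equation*}
so that $\|u_{0xx}\|_{L^\infty} \leq \frac{1}{2\sqrt{3}}\|\partial_x^3 u_0\|_{L^2}$. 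Combined with the hypothesis \eqref{global-assump-1}, this yields exactly $\|u_{0xx}\|_{L^\infty} \leq |\mu_0+\kappa|$.

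With this pointwise bound in hand, I would conclude by inspecting $m_0+\kappa = (\mu_0+\kappa) - u_{0xx}$. If $\mu_0 + \kappa \geq 0$ then $m_0 + \kappa \geq (\mu_0+\kappa) - \|u_{0xx}\|_{L^\infty} \geq 0$, while if $\mu_0 + \kappa \leq 0$ then $m_0 + \kappa \leq (\mu_0+\kappa) + \|u_{0xx}\|_{L^\infty} \leq 0$; in either case $m_0 + \kappa$ does not change sign. Since $u_0 \in H^3$ gives $m_0 = \mu_0 - u_{0xx} \in H^1$, the hypotheses of Theorem~\ref{thm-gss-1} are satisfied and the global existence of $u$ follows at once (the sign of $m+\kappa$ being preserved in time by Lemma~\ref{lem-diff-1} and Remark~\ref{rmk-diff-1}).

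There is no serious obstacle here: the argument is essentially a one-line application of the sharp mean-zero inequality \eqref{e2.1} followed by a sign check. The only point requiring care is recognizing that the natural quantity to feed into \eqref{e2.1} is $u_{0xx}$ (which is automatically mean-free), rather than $u_0$ or $u_{0x}$, so that the $H^3$-bound \eqref{global-assump-1} lands precisely on $|\mu_0+\kappa|$ with the sharp constant $\tfrac{1}{2\sqrt{3}}$ and thereby matches the sign condition of Theorem~\ref{thm-gss-1}.
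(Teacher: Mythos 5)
Your proposal is correct and follows essentially the same route as the paper: both apply the sharp mean-zero inequality \eqref{e2.1} to $f=u_{0xx}$ (using $\int_{\mathbb{S}}u_{0xx}\,dx=0$) to obtain $\|u_{0xx}\|_{L^\infty}\leq \frac{\sqrt{3}}{6}\|\partial_x^3 u_0\|_{L^2}\leq |\mu_0+\kappa|$, then check the sign of $m_0+\kappa$ in the two cases and invoke Theorem~\ref{thm-gss-1}. Your constant $\tfrac{1}{2\sqrt{3}}$ equals the paper's $\tfrac{\sqrt{3}}{6}$, so the arguments coincide in every essential detail.
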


\begin{proof}
 Let $T$ be the maximal time of existence of the solution $u$ to
 \eqref{e1.4} with the initial data $u_0$, given by Proposition \ref{p2.1}.

By Lemma \ref{rmk-2.1}, we get
\begin{equation*}\max (\partial_x^2 u_0)^2 \leq \frac{1}{12}
\int_{\mathbb{S}}(\partial_x^3 u_0)^2\, dx,
\end{equation*}
which gives rise to
\begin{equation}\label{global-0}\|\partial_x^2 u_0\|_{L^{\infty}} \leq \frac{\sqrt{3}}{6}
\|\partial_x^3 u_0\|_{L^2}.
\end{equation}

If $\mu_0+\kappa\geq 0$, it then is inferred from \eqref{global-0} and the
assumption \eqref{global-assump-1} that
\begin{equation*}\label{global-1}m_0+\kappa=\mu_0+\kappa-\partial_x^2 u_0
 \geq \mu_0+\kappa-\frac{\sqrt{3}}{6}
\|\partial_x^3 u_0\|_{L^2} \geq 0
\end{equation*}
Similarly, if $\mu_0+\kappa\leq 0$, one obtains from
\eqref{global-0} and \eqref{global-assump-1} that
\begin{equation*}\label{global-2}m_0+\kappa=\mu_0+\kappa-\partial_x^2 u_0
 \leq \mu_0+\kappa+\frac{\sqrt{3}}{6}
\|\partial_x^3 u_0\|_{L^2} \leq 0.
\end{equation*}
Therefore, in view of Theorem \ref{thm-gss-1}, the proof of this
theorem is complete.
\end{proof}

\setcounter{equation}{0}
%%%%%%%%%%%%%%%%%%%%%%%%%%%%%%%%%%%%%%%%%%%%%%
%%%%%%%%%%%%%%%%%%%%%%%%%%%%%%%%%%%%%%%%%%
\section{Existence of global weak solution}

In this section, we establish the existence of an admissible global
weak solution to \eqref{e1.4}, which may be stated as follows.
\begin{thm}\label{thm-main-1}
Assume that $u_0 \in H^{1}(\mathbb{S})$. Then the initial-value problem
\eqref{e1.1} has an admissible global weak solution, $u=u(t, x)$, in
the sense of Definition \ref{def-gws-1-1}. Furthermore, this weak
solution $u(t, x)$ satisfies the following properties.
\vskip 0.1cm

(i) One-sided supernorm estimate: There exists a positive constant
$C=C(u_0)$ such that the following one-sided $L^{\infty}$ norm
estimate on the first-order spatial derivative holds in the sense of
distribution:
\begin{equation}\label{entropy-time-1}
\partial_x u(t, x) \leq \frac{1}{t}+C, \quad \forall \quad t>0, \, x
\in \mathbb{S}.
\end{equation}

(ii) Space-time higher integrability estimate.

\begin{equation*}
\partial_x u \in L^{p}_{loc}(\mathbb{R}^{+}\times
\mathbb{S}), \quad \forall \, 1 \leq p<3,
\end{equation*}
i.e., for any $0<T<+\infty$, there exists a positive constant $C_1
=C_1(T, p)$ such that
\begin{equation}\label{high-integrability-1-1}
\int_0^{T}\int_{\mathbb{S}}|\partial_x u_{\varepsilon}(t, x)|^{p} \,
dx\, dt\leq C_1, \quad \forall \, 1 \leq p<3.
\end{equation}
\end{thm}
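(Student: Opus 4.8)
The plan is to construct the global weak solution as a limit of smooth solutions to a viscous (vanishing-viscosity) regularization of \eqref{e1.4}, following the Xin--Zhang approach \cite{xz} referenced in the introduction. Concretely, I would add a diffusive term $\varepsilon \partial_x^2 u$ to the transport equation, solving
\begin{equation*}
\partial_t u_\varepsilon + u_\varepsilon \partial_x u_\varepsilon + \partial_x P_\varepsilon = \varepsilon \partial_x^2 u_\varepsilon, \qquad (\mu-\partial_x^2)P_\varepsilon = 2\mu(u_\varepsilon)u_\varepsilon + \tfrac12(\partial_x u_\varepsilon)^2 + 2\kappa u_\varepsilon,
\end{equation*}
with smoothed initial data $u_{0,\varepsilon} \in H^\infty$ converging to $u_0$ in $H^1$. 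The first task is to establish $\varepsilon$-uniform bounds: the conservation of $\mu(u_\varepsilon)$ and the energy dissipation identity should yield $\|u_\varepsilon(t,\cdot)\|_{H^1} \leq C(u_0)$ uniformly in $\varepsilon$ and $t$, exactly reproducing the a priori estimates \eqref{e2.2}--\eqref{e2.3} that Section 2 derives for strong solutions. This gives weak-$*$ compactness of $u_\varepsilon$ in $L^\infty(\mathbb{R}^+;H^1)$ and, via the equation, equicontinuity in time, so by Arzel\`a--Ascoli one extracts a subsequence converging uniformly on compact sets to a candidate $u \in C(\mathbb{R}^+\times\mathbb{S})\cap L^\infty(\mathbb{R}^+;H^1)$.

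The crucial analytic input is the \emph{one-sided estimate} \eqref{entropy-time-1}. Setting $w_\varepsilon = \partial_x u_\varepsilon$ and differentiating the regularized equation, I would use the explicit form \eqref{formula-1.7} of $A^{-1}\partial_x^2$ to obtain an Oleinik-type (Riccati) differential inequality of the shape
\begin{equation*}
\partial_t w_\varepsilon + u_\varepsilon \partial_x w_\varepsilon - \varepsilon \partial_x^2 w_\varepsilon \leq -\tfrac12 w_\varepsilon^2 + C,
\end{equation*}
where the constant $C$ is controlled by the uniform $H^1$ bound (the lower-order terms $2(\mu_0+\kappa)(u_\varepsilon-\mu_0)-\tfrac12\mu_1^2$ from \eqref{e4.0} are bounded using \eqref{max-1-1}). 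Comparison with the ODE solution of $y' = -\tfrac12 y^2 + C$ along characteristics then forces $w_\varepsilon(t,x) \leq \frac{2}{t} + C'$ for all $\varepsilon$, which passes to the limit to give \eqref{entropy-time-1}. This upper bound on $\partial_x u$ is exactly what prevents the \emph{upward} spikes and is the mechanism that makes only wave-breaking (downward-infinite slopes) possible; it is also the source of the energy inequality in \eqref{energy-ine-1}, since the one-sided control lets one show $\|\partial_x u(t,\cdot)\|_{L^2}$ can only decrease.

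The space-time higher-integrability estimate \eqref{high-integrability-1-1} for $\partial_x u \in L^p_{loc}$, $p<3$, I would obtain by testing the $w_\varepsilon$-equation against a suitable power, e.g. multiplying by $(w_\varepsilon)_{-}^{p-1}$ (the negative part) and integrating over $[0,T]\times\mathbb{S}$; the term $-\tfrac12 w_\varepsilon^2 \cdot (w_\varepsilon)_-^{p-1}$ produces a coercive $\int\!\int |w_\varepsilon|^{p+1}$ contribution, and combined with the one-sided bound on the positive part this closes at any $p+1<4$, i.e. $p<3$, with the viscosity term giving a nonnegative contribution that we discard. \textbf{The main obstacle} will be the final passage to the limit in the nonlinear product $u_\varepsilon \partial_x u_\varepsilon$ and in $(\partial_x u_\varepsilon)^2$ appearing in $P_\varepsilon$: weak-$*$ convergence of $\partial_x u_\varepsilon$ in $L^2$ does not commute with squaring, so one must rule out concentration/oscillation. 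Here I would deploy a Young-measure / div-curl compensated-compactness argument exactly as in \cite{xz, zhangz-4}, using the one-sided bound \eqref{entropy-time-1} together with the higher integrability \eqref{high-integrability-1-1} (which gives $p>2$ equi-integrability of $(\partial_x u_\varepsilon)^2$) to show the limit of the squares equals the square of the limit almost everywhere. Once strong $L^2_{loc}$ convergence of $\partial_x u_\varepsilon$ is secured, verifying that $u$ satisfies \eqref{e1.4} distributionally and attains $u_0$ pointwise is routine, completing the proof.
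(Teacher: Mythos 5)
Your overall blueprint coincides with the paper's proof: vanishing viscosity with mollified data, conservation of the mean plus the dissipative energy identity for uniform $H^1$ bounds (Proposition \ref{thm-regu-1}), the Oleinik-type one-sided bound obtained by applying the parabolic comparison principle to the differentiated equation \eqref{one-derivative-1} (Proposition \ref{prop-entrop-con}), and a Young-measure argument in the style of \cite{xz, zhangz-4} to show the measure is a Dirac mass (Lemma \ref{lemma-Young-measure-1}) and pass to the limit. Deferring that last reduction wholesale to the references is thin — in the paper it is the bulk of Section 5.3, and it uses the renormalization inequalities, an initial-time continuity lemma, and, crucially, the one-sided bound to localize $\mathrm{Supp}\,\mu_{t,x}\subset(-\infty,\tfrac2t+C)$ — but the cited works do contain the template.

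There is, however, a genuine error in your higher-integrability step. In the equation for $w_\varepsilon=\partial_x u_\varepsilon$, the Riccati term $-\tfrac12 w_\varepsilon^2$ is a \emph{source}, not a sink, for the negative part: multiplying by $\Phi'(w_\varepsilon)$ with $\Phi(w)=\tfrac1p|w_-|^p$, $w_-:=\min(w,0)$, the term $\tfrac12 w_\varepsilon^2\Phi'(w_\varepsilon)=-\tfrac12|w_{\varepsilon,-}|^{p+1}$ enters with the unfavorable sign. The coercive contribution actually comes from the transport term after integration by parts, $\int_{\mathbb{S}} u_\varepsilon\partial_x\Phi(w_\varepsilon)\,dx=-\int_{\mathbb{S}} w_\varepsilon\Phi(w_\varepsilon)\,dx=\tfrac1p\int_{\mathbb{S}}|w_{\varepsilon,-}|^{p+1}\,dx$, so the net coefficient is $\tfrac1p-\tfrac12$: your argument closes only for multiplier powers with $p<2$, yielding $\iint|w_{\varepsilon,-}|^{p+1}\le C$ for $p+1<3$ — not ``$p+1<4$, i.e.\ $p<3$'' as you assert. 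For $p\in[2,3)$ the coefficient is nonpositive and the estimate collapses, and flipping the sign of the multiplier does not rescue it (then the viscosity term $\varepsilon\iint\Phi''(w_\varepsilon)(\partial_x w_\varepsilon)^2$ and the time-boundary term $\int|w_{\varepsilon,-}(T)|^p$ become uncontrollable by the energy). Two further repairs are needed: the positive part is not handled by \eqref{entrop-con-1} alone, since $(2/t)^q$ is integrable near $t=0$ only for $q<1$; one must interpolate against the energy, $\iint w_{\varepsilon,+}^{2+\alpha}\le\mu_1^2\int_0^T(\tfrac2t+L_0)^\alpha\,dt<\infty$ for $\alpha<1$. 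With these corrections your route does deliver exactly the range $q<3$ of \eqref{high-integrability-1-1}, in parallel with the paper's Proposition \ref{prop-high-inte}, whose two-sided multiplier $\theta(\xi)=\xi(|\xi|+1)^\alpha$ from \cite{chk} (Lemma \ref{lemma-cutoff-1}) exhibits the same mechanism: $\xi\theta(\xi)-\tfrac12\xi^2\theta'(\xi)$ is coercive only for $\alpha<1$, so the exponent $3$ is the intrinsic limit of the method, not a threshold you can push to $4$. (A minor further slip: the energy inequality \eqref{energy-ine-1} does not come from the one-sided bound but from the viscous identity \eqref{identity-2} together with the strong convergence of $\partial_x u_\varepsilon$.)
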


The proof of this theorem is motivated by the one of Theorem 1.2 in
\cite{xz}.  This method, as far as we know,  was first used by Zhang
and Zheng to study an admissible global solution to a variational
wave equation in \cite{zhangz-4}.

\subsection{Viscous Approximate Solutions}

We obtain the existence of a global weak solution to the initial-value problem
\eqref{e1.4} by proving compactness of a sequence of smooth
functions $\{u_{\varepsilon}\}_{\varepsilon>0}$ solving the
following viscous problems with the initial data $u_{\varepsilon
0}(x)=\phi_{\varepsilon} \ast u_{0}$,
\begin{equation}\label{appr-equation-1}
\begin{cases}
\partial_t u_{\varepsilon}+u_{\varepsilon} \partial_x u_{\varepsilon}+\partial_x P_{\varepsilon}-\varepsilon \partial_x^2
u_{\varepsilon}=0,
 \qquad t > 0, \quad x \in \mathbb{R}, \\
(\mu-\partial_x^2) P_{\varepsilon}=2\mu(u_{\varepsilon})
u_{\varepsilon}+\frac{1}{2}(\partial_x u_{\varepsilon})^2+2\kappa\, u_{\varepsilon},
\quad t > 0, \quad x \in \mathbb{R},\\
u_{\varepsilon}(t,x+1)=u_{\varepsilon}(t,x),\quad\quad\quad t \ge 0,
\quad x\in\mathbb{R}, \\
u_{\varepsilon}(0,x)=u_{\varepsilon 0}(x),  \qquad x \in \mathbb{R},
 \end{cases}
\end{equation}
or equivalently,
\begin{equation}\label{appr-equation-1-ab}
\begin{cases}
\partial_tm_{\varepsilon} -\varepsilon
\partial_x^2
m_{\varepsilon}+2\kappa\,\partial_x
u_{\varepsilon}+u_{\varepsilon}\partial_x m_{\varepsilon}+2
m_{\varepsilon}\partial_x u_{\varepsilon}=0,\quad t > 0, \quad x \in \mathbb{R}, \\
m_{\varepsilon}= (\mu-\partial_x^2)u_{\varepsilon}, \quad\quad\quad t \ge 0,\quad x\in\mathbb{R},\\
u_{\varepsilon}(t,x+1)=u_{\varepsilon}(t,x),\quad\quad\quad t \ge 0,
\quad x\in\mathbb{R},\\
u_{\varepsilon}(0,x)=u_{\varepsilon 0}(x),  \qquad x \in \mathbb{R},
 \end{cases}
\end{equation}
where the truncating family
$\{\phi_{\varepsilon}(x)\}_{\varepsilon>0} $ satisfies
\begin{equation}\label{truncating-1}
\phi_{\varepsilon}(x) =\varepsilon^{-1}\phi(x/{\varepsilon}) \quad
\mbox{with} \quad \varepsilon>0, \quad \phi \in
C^{\infty}_{c}(\mathbb{R}), \quad \phi \geq 0, \quad
\|\phi\|_{L^1}=1.\end{equation}

The existence, uniqueness, and basic energy estimate on this
approximate solution sequence are given in the following
proposition.
\begin{prop}\label{thm-regu-1}
Let $\varepsilon>0$ and $u_{0\varepsilon}\in H^k(\mathbb{S})$ for
some $k \geq 1$. Then there exists a unique solution
$u_{\varepsilon} \in C(\mathbb{R}^+; H^{k}(\mathbb{S}))$ to the
initial-value problem \eqref{appr-equation-1}. Furthermore, the following
energy identities hold for all $t \geq 0$.
\begin{equation}\label{identity-2}
\begin{split}
\mu(u_\varepsilon(t))=\mu(u_{0\varepsilon}) \quad \mbox{and} \quad
\int_{\mathbb{S}}(\partial_x u_{\varepsilon})^2 (t, x) d
x+2\varepsilon\int_{\Bbb S}(\partial_x^2 u_\varepsilon)^2(t, x)\, d
x=\int_{\mathbb{S}}(\partial_x u_{0\varepsilon})^2 d x.
\end{split}
\end{equation}
\end{prop}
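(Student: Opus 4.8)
The plan is to regard \eqref{appr-equation-1} as a semilinear \emph{parabolic} equation (the term $-\varepsilon\partial_x^2 u_\varepsilon$ makes it so), to first obtain a unique local-in-time solution by a contraction mapping argument in the mild formulation, then to promote it to a global solution by a priori energy estimates, and finally to establish the two identities in \eqref{identity-2} by direct energy computations. The local theory and the identities are essentially routine; the genuine work is the global a priori bound.

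First I would rewrite \eqref{appr-equation-1} in its Duhamel (mild) form. Since $A^{-1}=(\mu-\partial_x^2)^{-1}$ is smoothing of order $+2$ by Remark \ref{rmk-a-opera-1}, one has $\partial_x P_\varepsilon=\partial_x A^{-1}\big(2\mu(u_\varepsilon)u_\varepsilon+\frac12(\partial_x u_\varepsilon)^2+2\kappa u_\varepsilon\big)$, and the nonlinear map $u\mapsto F(u):=-u\partial_x u-\partial_x P_\varepsilon$ is locally Lipschitz from $H^k(\mathbb{S})$ into $H^{k-1}(\mathbb{S})$ for $k\ge 1$, the worst contribution being the quadratic $(\partial_x u)^2$, which is tamed by the net one-derivative gain of $\partial_x A^{-1}$. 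Writing
\[
u_\varepsilon(t)=e^{\varepsilon t\partial_x^2}u_{0\varepsilon}+\int_0^t e^{\varepsilon(t-s)\partial_x^2}F(u_\varepsilon)(s)\,ds,
\]
I would use the smoothing estimate $\|e^{\varepsilon t\partial_x^2}f\|_{H^k}\lesssim(\varepsilon t)^{-1/2}\|f\|_{H^{k-1}}$ for the heat semigroup on $\mathbb{S}$, whose singularity $(\varepsilon(t-s))^{-1/2}$ is integrable in $s$, to run a Banach fixed point in $C([0,T];H^k(\mathbb{S}))$ for $T$ small. This gives a unique local solution and, via the standard blow-up alternative, a maximal time $T^*_\varepsilon$ such that $T^*_\varepsilon<\infty$ forces $\|u_\varepsilon(t)\|_{H^k}\to\infty$ as $t\uparrow T^*_\varepsilon$; moreover parabolic smoothing makes $u_\varepsilon(t,\cdot)$ smooth for $t>0$, which legitimizes all the integrations by parts below.

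For the identities I would integrate the first equation of \eqref{appr-equation-1} over $\mathbb{S}$: by periodicity $\int_\mathbb{S}u_\varepsilon\partial_x u_\varepsilon=\int_\mathbb{S}\partial_x P_\varepsilon=\varepsilon\int_\mathbb{S}\partial_x^2 u_\varepsilon=0$, so $\frac{d}{dt}\mu(u_\varepsilon)=0$, giving $\mu(u_\varepsilon(t))=\mu(u_{0\varepsilon})$. For the $\dot H^1$ balance I would pair $\partial_t u_\varepsilon=\varepsilon\partial_x^2 u_\varepsilon-u_\varepsilon\partial_x u_\varepsilon-\partial_x P_\varepsilon$ with $-\partial_x^2 u_\varepsilon$ and integrate: the time derivative produces $\frac12\frac{d}{dt}\int_\mathbb{S}(\partial_x u_\varepsilon)^2$, the viscosity produces the good term $-\varepsilon\int_\mathbb{S}(\partial_x^2 u_\varepsilon)^2$, and substituting the elliptic relation $\partial_x^2 P_\varepsilon=\mu(P_\varepsilon)-2\mu(u_\varepsilon)u_\varepsilon-\frac12(\partial_x u_\varepsilon)^2-2\kappa u_\varepsilon$ (equivalently \eqref{formula-1.7}) one finds that the two cubic contributions $\pm\frac12\int_\mathbb{S}(\partial_x u_\varepsilon)^3$ cancel exactly, leaving $\frac{d}{dt}\int_\mathbb{S}(\partial_x u_\varepsilon)^2=-2\varepsilon\int_\mathbb{S}(\partial_x^2 u_\varepsilon)^2$. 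Integrating in $t$ then yields the energy balance recorded in \eqref{identity-2}.

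Finally, for global existence I would use the energy balance, which bounds $\|\partial_x u_\varepsilon(t)\|_{L^2}$ and hence, with the conserved mean, $\|u_\varepsilon(t)\|_{H^1}$ uniformly on $[0,T^*_\varepsilon)$, as the base of a higher-order estimate. Differentiating $k$ times, pairing with $\partial_x^k u_\varepsilon$, and exploiting the dissipative term $-\varepsilon\|\partial_x^{k+1}u_\varepsilon\|_{L^2}^2$ to absorb the top-order nonlinear contributions, one should reach $\frac{d}{dt}\|u_\varepsilon\|_{H^k}^2\le C(\varepsilon,\|u_\varepsilon\|_{H^1})\|u_\varepsilon\|_{H^k}^2$, whence Gronwall gives a finite bound on every $[0,T]$ and the blow-up alternative forces $T^*_\varepsilon=\infty$. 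The hard part will be exactly this global $H^k$ estimate: the terms $u_\varepsilon\partial_x m_\varepsilon$ and $2m_\varepsilon\partial_x u_\varepsilon$ in \eqref{appr-equation-1-ab} lose one derivative, so the whole argument rests on balancing the commutator and product estimates against the fixed positive viscosity $\varepsilon$, the lower-order terms being harmless once the $H^1$ norm is controlled.
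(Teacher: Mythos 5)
Your proposal is correct, and its skeleton---standard parabolic local well-posedness, the two identities by direct integration by parts, then a global $H^k$ bound powered by the viscosity---matches the paper's; in particular your cancellation of the cubic terms $\pm\frac12\int_{\mathbb S}(\partial_xu_\varepsilon)^3\,dx$ is exactly right, and your time-integrated form of the $\dot H^1$ balance is the version the paper actually uses later (as printed, \eqref{identity-2} omits the time integral). Where you genuinely diverge is in how the global bound is closed. The paper derives, via the Kato--Ponce commutator estimate, the inequality $\frac{d}{dt}\|u_\varepsilon\|_{H^k}^2+2\varepsilon\|\partial_xu_\varepsilon\|_{H^k}^2\le C_k(\|\partial_xu_\varepsilon\|_{L^\infty}+1)\|u_\varepsilon\|_{H^k}^2$, reads it as a blow-up criterion ($T<\infty$ would force $\int_0^T\|\partial_xu_\varepsilon\|_{L^\infty}\,dt=+\infty$), and then contradicts that criterion using the dissipation only in integrated form: since $\partial_xu_\varepsilon$ has zero mean, Lemma \ref{l2.1} gives $\|\partial_xu_\varepsilon\|_{L^\infty}\le\frac{\sqrt3}{6}\|\partial_x^2u_\varepsilon\|_{L^2}$, so Cauchy--Schwarz in time together with $2\varepsilon\int_0^T\|\partial_x^2u_\varepsilon\|_{L^2}^2\,dt\le\mu_1^2$ yields $\int_0^T\|\partial_xu_\varepsilon\|_{L^\infty}\,dt\le\frac{\sqrt6}{12\sqrt\varepsilon}\,T^{1/2}\mu_1<\infty$, and Gronwall finishes. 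You instead absorb $\|\partial_xu_\varepsilon\|_{L^\infty}\|u_\varepsilon\|_{H^k}^2$ pointwise in time into $\varepsilon\|\partial_xu_\varepsilon\|_{H^k}^2$; this does work, because with the anchor $\|\partial_xu_\varepsilon(t)\|_{L^2}\le\mu_1$ Gagliardo--Nirenberg gives $\|\partial_xu_\varepsilon\|_{L^\infty}\lesssim\mu_1^{1-\frac1{2k}}\|\partial_x^{k+1}u_\varepsilon\|_{L^2}^{\frac1{2k}}$ and $\|\partial_x^ku_\varepsilon\|_{L^2}^2\lesssim\mu_1^{\frac2k}\|\partial_x^{k+1}u_\varepsilon\|_{L^2}^{2-\frac2k}$, so the offending product carries only the power $2-\frac3{2k}<2$ of the dissipative norm and Young's inequality absorbs it at the price of a constant $C(\varepsilon,\mu_1,k)$, after which Gronwall gives the bound on every finite interval, as you claim (you get an additive constant besides $C\|u_\varepsilon\|_{H^k}^2$, which is harmless). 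Both routes rest on the same two pillars---the commutator estimate and the $\varepsilon$-dissipation---but the paper's deployment of the dissipation in time-integrated form buys simplicity (one Cauchy--Schwarz in time replaces the interpolation bookkeeping), while yours stays at the level of a single differential inequality and makes the $\varepsilon$-dependence of the Gronwall constant explicit. One cosmetic correction: on $\mathbb S$ the heat semigroup does not smooth the zero Fourier mode, so the smoothing estimate in your fixed-point step should read $\|e^{\varepsilon t\partial_x^2}f\|_{H^k}\lesssim\bigl(1+(\varepsilon t)^{-1/2}\bigr)\|f\|_{H^{k-1}}$; this changes nothing for small times.
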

\begin{rmk}\label{rmk-regu-1} Thanks to \eqref{truncating-1},
together with Young's inequality applied, we deduce that
\begin{equation*}\label{regu-1-1}
\begin{split}
  \mu(u_{0\varepsilon})&=\int_{\mathbb{S}}\int_{\mathbb{R}}
  \frac{1}{\varepsilon} \phi(\frac{y}{\varepsilon}) u_0(x-y) \, dy dx
  =\int_{\mathbb{R}} \frac{1}{\varepsilon} \phi(\frac{y}{\varepsilon})(\int_{\mathbb{S}}
  u_0(x-y) \, dx) dy\\
  &=\mu(u_0) \int_{\mathbb{R}}\frac{1}{\varepsilon} \phi(\frac{y}{\varepsilon})\, dy=\mu(u_0)=\mu_0
\end{split}
\end{equation*}
and
\begin{equation}\label{regu-1-2}
\begin{split}
 \int_{\mathbb{S}}(\partial_x u_{0\varepsilon})^2 d x
 =\|\phi_{\varepsilon} \ast \partial_x u_{0}\|_{L^2}^2
 \leq \|\phi_{\varepsilon}\|_{L^1} \|\partial_x u_{0}\|_{L^2}^2=\|\partial_x
 u_{0}\|_{L^2}^2=\mu_1^2.
\end{split}
\end{equation}
\end{rmk} The strategy of the proof of Proposition \ref{thm-regu-1} is rather routine.
For the sake of simplicity, we will only sketch the necessary
estimates. While for the convenience of presentation, we will omit
the subscript $\varepsilon$ in $u_{\varepsilon}$ in the following
proof.
\begin{proof}[Proof of Proposition \ref{thm-regu-1}]
First, following the standard argument for a nonlinear parabolic
equation, one can obtain the local well-posedness result that for
$u_{0\varepsilon} \in H^k(\mathbb{S})$, there exists a positive
constant $T_0$ such that \eqref{appr-equation-1} has a unique
solution
\begin{equation*}u=u(t, x) \in C([0, T_0],
H^{k}(\mathbb{S})) \cap L^2([0, T_0], H^{k+1}(\mathbb{S})).
\end{equation*}
We denote the life span of the solution $u(t, x)$  by $T$. Then,
\eqref{identity-2} holds for all $0 \leq t < T$.

Next we claim that if the life span $T < +\infty$, i.e., $u\in C([0,
T), H^{k}(\mathbb{S}))$, and
\begin{equation}\label{blow-up-app-1-ab}\lim_{t \rightarrow T}\|u(t, \cdot)\|_{H^k(\mathbb{S})}
=+\infty, \quad T<+\infty,\end{equation}
then
\begin{equation*}\label{blow-up-app-1}
\lim_{t \rightarrow
T}\int_0^t\|\partial_xu(\tau,\cdot)\|_{L^{\infty}(\mathbb{S})}\,
d\tau=+\infty, \quad T<+\infty.
\end{equation*} Indeed, assume that the maximal existence time $T<+\infty$.
It then follows from the equation in \eqref{appr-equation-1}, together
with \eqref{identity-2}, that for $t <T$
\begin{equation}\label{energy-1-1}
\begin{split}
&\frac{1}{2}\frac{d}{dt}\| u\|_{H^{k}(\mathbb{S})}^2+\varepsilon
\|\partial_{x}
u\|_{H^{k}(\mathbb{S})}^2\\
&=\sum_{\alpha=0}^{k}\int_{\mathbb{S}}\left(\frac{1}{2}\partial_x
u(\partial_x^{\alpha}u)^2+(u
\partial_x^{1+\alpha}u-\partial_x^{\alpha}(u\partial_x u))\partial_x^{\alpha}u
-\partial_x^{1+\alpha}P\partial_x^{\alpha}u\right) \, dx.
\end{split}
\end{equation}
Note that
\begin{equation}\label{inequality-1-1}
\begin{split}
&\sum_{\alpha=0}^{k}\int_{\mathbb{S}}\left(\frac{1}{2}\partial_x
u(\partial_x^{\alpha}u)^2\right)\, dx \leq C_{k} \|\partial_x
u(t)\|_{L^{\infty}(\mathbb{S})}\| u(t)\|_{H^{k}(\mathbb{S})}^2
\end{split}
\end{equation}
and \begin{equation}\label{inequality-1-3}
\begin{split}
|\int_{\mathbb{S}} \partial_x^{1+\alpha}P\partial_x^{\alpha}u\,
dx|\leq \|
\partial_x^{\alpha}u(t)\|_{L^{2}(\mathbb{S})}\|
\partial_x^{1+\alpha}P\|_{L^{2}(\mathbb{S})}.
\end{split}
\end{equation}
Due to Remark \ref{rmk-a-opera-1}, we apply a standard elliptic
regularity estimate to \eqref{formula-2.2} to obtain that for $0\leq
\alpha \leq k$
\begin{equation}\label{energy-1-6}
\begin{split}
\|\partial_x^{1+\alpha}P\|_{L^{2}} &\leq C\left(|\mu(u)|^2+\|
u\|_{H^{1}}^2+\kappa
\|u\|_{L^{2}(\mathbb{S})}+\|\partial_x^{\alpha-1} ((\partial_x
u)^2)\|_{L^{2}}^2\right)\\
 &\leq C\left(|\mu(u)|^2+\|
u\|_{H^{1}}^2+\kappa \|u\|_{L^{2}(\mathbb{S})}+\|\partial_x
u\|_{L^{\infty}}\| u\|_{H^{\alpha}}\right) .
\end{split}
\end{equation}
Applying the Kato-Ponce commutator estimate \cite{kat} yields
\begin{equation}\label{inequality-1-2}
\begin{split}
\|u
\partial_x^{1+\alpha}u-\partial_x^{\alpha}(u\partial_x u)\|_{L^{2}(\mathbb{S})}
\leq C_{k}\|\partial_x u(t)\|_{L^{\infty}(\mathbb{S})}\|
\partial_x^{\alpha}u(t)\|_{L^{2}(\mathbb{S})},
\end{split}
\end{equation}
which, together with \eqref{inequality-1-1}, \eqref{inequality-1-3},
\eqref{energy-1-6} and \eqref{identity-2} applied to
\eqref{energy-1-1}, leads to
\begin{equation}\label{energy-1-7}
\begin{split}
&\frac{d}{dt}\| u\|_{H^{k}(\mathbb{S})}^2+2\varepsilon
\|\partial_{x} u\|_{H^{k}(\mathbb{S})}^2\leq C_{k} (\|\partial_x
u\|_{L^{\infty}}+1)\| u\|_{H^{k}(\mathbb{S})}^2.
\end{split}
\end{equation}
Hence, if $\lim_{t \rightarrow
T}\int_0^t\|\partial_xu(\tau,\cdot)\|_{L^{\infty}(\mathbb{S})}\,
d\tau<+\infty$, then applying Gronwall's inequality to
\eqref{energy-1-7}, we get $\lim_{t \rightarrow T}\|u(t,
\cdot)\|_{H^k(\mathbb{S})}<+\infty$, which contradicts
\eqref{blow-up-app-1-ab}. This completes the proof of the claim.

On the other hand, thanks to Lemma \ref{l2.1}, we get
\begin{equation*}\max\limits_{x\in \mathbb{S}}\;\left(\partial_x u(t,x)\right)^2
\leq\frac{1}{12}\int_{\mathbb{S}}(\partial_x^2u)^2(t,x)  dx.
\end{equation*}
 From this, together with \eqref{identity-2},
\eqref{regu-1-2} and H\"{o}lder's inequality, we obtain that for any
$0 \leq t <T$
\begin{equation*}
\begin{split}
\int_0^t\|\partial_xu(\tau,\cdot)\|_{L^{\infty}(\mathbb{S})}\, d\tau
 \leq \frac{\sqrt{3}}{6} \int_0^t\|\partial_x^2
u(\tau,\cdot)\|_{L^{2}(\mathbb{S})}\, d\tau \leq
\frac{\sqrt{6}}{12\sqrt{\varepsilon}}T^{\frac{1}{2}}\|\partial_x
u_0\|_{L^{2}(\mathbb{S})},
\end{split}
\end{equation*}
which implies that the life-span $T=+\infty$. Furthermore,
\eqref{identity-2} now holds on $[0, +\infty)$. This completes the
proof of Proposition \ref{thm-regu-1}.
\end{proof}

\subsection{Uniform {\it A Priori} Estimates}

Let $u_0 \in H^1(\mathbb{S})$ and $u_{\varepsilon}(t, x)$ be the
unique global-in-time solution to \eqref{appr-equation-1} obtained
in Proposition \ref{t4.2} which satisfies the energy
identity \eqref{identity-2}. To obtain the compactness of this
approximate solution sequence, we need some {\it a priori} estimates
in addition to \eqref{identity-2}. In this subsection, we derive the
uniform one-sided supernorm estimate \eqref{entropy-time-1} and the
space-time higher integrability estimates
\eqref{high-integrability-1-1} on $\partial_x u_{\varepsilon}(t,
x)$, which are essential for our compactness argument.

We start with the uniform one-sided supernorm estimate, which is
similar to Oleinik's entropy condition for the theory of shock waves
\cite{xin}.

\begin{prop}\label{prop-entrop-con}
There holds
\begin{equation}\label{entrop-con-1}
\partial_x u_{\varepsilon}(t, x) \leq \frac{2}{t}+L_0, \quad \forall
\, t>0, \quad x \in \mathbb{S}
\end{equation}
with the constant
\begin{equation*}\label{entrop-const-2}L_0:=\sqrt{2(\mu_0+\kappa)^2+\frac{7}{6}\mu_1^2} .
\end{equation*}
\end{prop}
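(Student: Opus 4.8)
The plan is to derive a scalar Riccati-type differential inequality for $w:=\partial_x u_\varepsilon$ and then compare it with an explicit singular supersolution. First I would differentiate the first equation of \eqref{appr-equation-1} in $x$ and set $w=\partial_x u_\varepsilon$, which gives $\partial_t w+u_\varepsilon\partial_x w-\varepsilon\partial_x^2 w+w^2+\partial_x^2 P_\varepsilon=0$. To make this usable I would express $\partial_x^2 P_\varepsilon$ explicitly: applying $\partial_x^2$ to the elliptic relation $P_\varepsilon=A^{-1}\bigl(2(\mu_0+\kappa)u_\varepsilon+\tfrac12 w^2\bigr)$ (using $\mu(u_\varepsilon)=\mu_0$), together with the identity \eqref{formula-1.7}, yields $\partial_x^2 P_\varepsilon=-2(\mu_0+\kappa)u_\varepsilon-\tfrac12 w^2+2(\mu_0+\kappa)\mu_0+\tfrac12\int_{\mathbb S}w^2\,dx$. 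Substituting this back produces the key identity $\partial_t w+u_\varepsilon\partial_x w-\varepsilon\partial_x^2 w=-\tfrac12 w^2+F$, where the forcing is $F:=2(\mu_0+\kappa)(u_\varepsilon-\mu_0)-\tfrac12\int_{\mathbb S}(\partial_x u_\varepsilon)^2\,dx$.

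Next I would bound the forcing $F$ from above by a time-independent constant. By the energy identity \eqref{identity-2} and \eqref{regu-1-2} one has $\int_{\mathbb S}(\partial_x u_\varepsilon)^2\,dx\le\mu_1^2$, so the last term of $F$ is nonpositive; and since $u_\varepsilon-\mu_0$ has zero mean, Lemma \ref{l2.1} gives $\max_{x}(u_\varepsilon-\mu_0)^2\le\tfrac1{12}\mu_1^2$. Combining these with Young's inequality $2(\mu_0+\kappa)(u_\varepsilon-\mu_0)\le(\mu_0+\kappa)^2+(u_\varepsilon-\mu_0)^2$ yields $F\le(\mu_0+\kappa)^2+\tfrac1{12}\mu_1^2\le(\mu_0+\kappa)^2+\tfrac{7}{12}\mu_1^2=\tfrac12 L_0^2$; this step is routine.

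The analytic heart is the reduction to an ODE and the comparison. Setting $M(t):=\max_{x\in\mathbb S}w(t,x)$, the solution is smooth for $t>0$ by parabolic smoothing, so (exactly as in Lemma \ref{l2.3}) $M$ is locally Lipschitz and $M'(t)=\partial_t w(t,x_\ast(t))$ a.e., where $x_\ast(t)$ is a maximizer. At $x_\ast(t)$ one has $\partial_x w=0$ and $\partial_x^2 w\le0$, so the viscous term $\varepsilon\partial_x^2 w$ contributes with the favorable sign, and the key identity together with the forcing bound give the Riccati inequality $M'(t)\le-\tfrac12 M(t)^2+\tfrac12 L_0^2$ for a.e. $t>0$. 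Finally I would compare $M$ with $\phi(t):=\tfrac2t+L_0$: a direct computation shows $\phi'=-\tfrac2{t^2}\ge-\tfrac12\phi^2+\tfrac12 L_0^2$, so $\phi$ is a supersolution with $\phi(0^+)=+\infty$; since $M(0)=\max_x\partial_x u_{0\varepsilon}$ is finite, a comparison argument on $\psi:=M-\phi$ (using $\psi'\le-\tfrac12\psi(M+\phi)$ and the fact that $M+\phi>0$ wherever $\psi>0$) forces $\psi\le0$, i.e. $M(t)\le\tfrac2t+L_0$, which is \eqref{entrop-con-1}.

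The main obstacle I anticipate is the last paragraph: rigorously justifying the a.e. differentiability of $M(t)$ and the envelope identity $M'=\partial_t w(\cdot,x_\ast)$, correctly exploiting the sign of $\varepsilon\partial_x^2 w$ at the maximum, and running the comparison against the singular supersolution $\tfrac2t+L_0$ near $t=0$. By contrast, the derivation of the evolution equation for $w$ via \eqref{formula-1.7} and the bound on $F$ are essentially mechanical.
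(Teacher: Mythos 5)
Your proof is correct, and its skeleton coincides with the paper's: you derive the same viscous Riccati-type equation \eqref{one-derivative-1} for $q_\varepsilon=\partial_x u_\varepsilon$ (via \eqref{formula-1.7}), bound the forcing by $\tfrac12 L_0^2$ using \eqref{identity-2} and Lemma \ref{l2.1} (your bound is in fact slightly sharper, since you keep the favorable sign of $-\tfrac12\mu(q_\varepsilon^2)$ instead of bounding its absolute value as the paper does), and compare with the same singular barrier $2/t+L_0$. The genuine difference is how the comparison is implemented. The paper never differentiates the running maximum: it introduces the spatially homogeneous solution $Q_\varepsilon(t)$ of the Riccati ODE \eqref{one-derivative-2} with initial value $\max\{0,\partial_x u_{0\varepsilon}\}$, invokes the parabolic comparison principle to conclude $q_\varepsilon(t,x)\le Q_\varepsilon(t)$, and then performs a pure ODE comparison $Q_\varepsilon(t)\le 2/t+L_0$. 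You instead apply the Constantin--Escher-type envelope argument (Lemma \ref{l2.3}) to $M(t)=\max_{x}q_\varepsilon(t,x)$, exploit the sign $\varepsilon\partial_x^2 q_\varepsilon\le 0$ at the maximizer to obtain $M'\le -\tfrac12 M^2+\tfrac12 L_0^2$ a.e., and compare $M$ directly with the barrier. Both routes are rigorous: yours must justify the a.e. differentiability of $M$ and run the first-touching argument on $\psi=M-\phi$ against a barrier that blows up at $t=0^+$, which you do correctly since $M(0^+)$ is finite and $M+\phi>0$ wherever $\psi>0$; the paper outsources exactly these delicate points to the parabolic comparison principle and to the global solvability of the Riccati ODE from nonnegative data (the reason for the truncation $\max\{0,\cdot\}$ in \eqref{one-derivative-2}). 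What your version buys is independence from the parabolic comparison principle (only the elementary maximum-point signs and an ODE lemma are needed); what the paper's version buys is that it avoids any discussion of Lipschitz regularity of the running maximum.
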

\begin{proof}
Set $q_{\varepsilon}=\partial_x u_{\varepsilon}$. Differentiating
the first equation in \eqref{appr-equation-1} with respect to $x$,
we get from Proposition \ref{thm-regu-1} and Remark \ref{rmk-regu-1}
that
\begin{equation}\label{one-derivative-1}
\begin{cases}
\partial_t q_{\varepsilon}+u_{\varepsilon}\partial_x
q_{\varepsilon}
-\varepsilon\partial_x^2q_{\varepsilon}+\frac{1}{2}(q_{\varepsilon})^2
=2(u_{\varepsilon}-\mu_0)(\mu_0+\kappa)-\frac{1}{2}\mu(q_{\varepsilon}^2),\\
q_{\varepsilon}(t, x)|_{t=0}=\partial_x u_{0\varepsilon}.
\end{cases}
\end{equation}
Thanks to \eqref{max-1-1}(up to a slight modification), one has
\begin{equation*}\label{integral-1-1}
\begin{split}
  \|2(u_{\varepsilon}-\mu_0)(\mu_0+\kappa)\|_{L^{\infty}}
  \leq   \frac{\sqrt{3}}{3}|\mu_{0}+\kappa|\mu_1\leq (\mu_0+\kappa)^2+\frac{1}{12}\mu_1^2.
\end{split}
\end{equation*}
While from \eqref{identity-2}, we deduce that
\begin{equation*}\label{integral-1-2}
\begin{split}
  \|\frac{1}{2}\mu(q_{\varepsilon}^2)\|_{L^{\infty}} \leq \frac{1}{2}\mu_1^2.
\end{split}
\end{equation*}
So,
\begin{equation}\label{integral-1-3}
\begin{split}
  \|2(u_{\varepsilon}-\mu_0)(\mu_0+\kappa)-\frac{1}{2}\mu(q_{\varepsilon}^2)\|_{L^{\infty}}
  \leq (\mu_0+\kappa)^2+\frac{7}{12}\mu_1^2
    =\frac{1}{2} L_0^2.
\end{split}
\end{equation}
Define $Q_{\varepsilon}(t)$(for $t>0$) which solves the following
ordinary differential equation
\begin{equation}\label{one-derivative-2}
\begin{cases}
\dfrac{d}{dt} Q_{\varepsilon}+\frac{1}{2}(Q_{\varepsilon})^2
=\frac{1}{2} L_0^2,\\
Q_{\varepsilon}(t=0)=\max\{0, \partial_x u_{0\varepsilon}\}.
\end{cases}
\end{equation}
Then the function $Q_{\varepsilon}(t)$ is a supersolution of the
parabolic initial-value problem \eqref{one-derivative-1}. The
comparison principle for parabolic equations leads to
\begin{equation}\label{supersolution-1}
q_{\varepsilon}(t, x) =\partial_x u_{\varepsilon}(t, x) \leq
Q_{\varepsilon}(t), \quad \forall\, t \geq 0, \quad x \in
\mathbb{S}.
\end{equation}
While a direct computation yields that $L(t):= \frac{2}{t}+L_0$(with
$t>0$) satisfies
\begin{equation*}\label{one-derivative-3}
\dfrac{d}{dt}
L(t)+\frac{1}{2}L(t)^2=\frac{1}{2}L_0^2+\frac{2L_0}{t}>\frac{1}{2}L_0^2,
\quad \forall \quad t>0,
\end{equation*}
which implies that $L(t)$ is a supersolution of
\eqref{one-derivative-2}. Hence, the comparison principle for a
parabolic equation yields $Q_{\varepsilon}(t) \leq L(t)$  for all
$t>0$, which together with \eqref{supersolution-1} admits
\eqref{entrop-con-1}.
\end{proof}

Next, we establish the uniform local space-time higher integrability
estimate \eqref{high-integrability-1-1} motivated by the idea in
\cite{lions, xz, zhangz-3, zhangz-4, zhangz-2}, which is crucial to
studying the structures of the Young measure associated with the
weak convergence sequence $\partial_x u_{\varepsilon}$.

\begin{prop}\label{prop-high-inte}
Let $0 < \alpha< 1$, $T > 0$. Then there exists a positive constant
$C$ depending only on $\|u_0\|_{H^1}$, $T$( but independent of
$\varepsilon$) such that
\begin{equation}\label{high-integrability}
\int_0^{T}\int_{\mathbb{S}}|\partial_x u_{\varepsilon}(t,
x)|^{2+\alpha} \, dx\, dt\leq C,
\end{equation}
where $u_{\varepsilon} = u_{\varepsilon}(t, x)$ is the unique
solution of \eqref{appr-equation-1}.
\end{prop}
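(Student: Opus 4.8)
The plan is to run an $L^{2+\alpha}$ renormalization of the transport--diffusion equation \eqref{one-derivative-1} satisfied by $q_\varepsilon:=\partial_x u_\varepsilon$, closing the estimate with only the $\varepsilon$-uniform bounds already in hand: the energy bound $\|q_\varepsilon(t)\|_{L^2}\le\mu_1$ together with the viscous dissipation bound $\varepsilon\int_0^T\!\!\int_{\mathbb S}(\partial_x q_\varepsilon)^2\,dx\,dt\le\tfrac12\mu_1^2$ furnished by the energy identity \eqref{identity-2}, and the $L^\infty$ bound $\|2(u_\varepsilon-\mu_0)(\mu_0+\kappa)-\tfrac12\mu(q_\varepsilon^2)\|_{L^\infty}\le\tfrac12 L_0^2$ from \eqref{integral-1-3}. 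Denoting the right-hand side of \eqref{one-derivative-1} by $f_\varepsilon$, so that $\partial_t q_\varepsilon+u_\varepsilon\partial_x q_\varepsilon-\varepsilon\partial_x^2 q_\varepsilon+\tfrac12 q_\varepsilon^2=f_\varepsilon$ with $\|f_\varepsilon\|_{L^\infty}\le\tfrac12 L_0^2$, I would multiply by $\beta'(q_\varepsilon)$ for a $C^2$ weight $\beta$ and integrate over $\mathbb S$. Since the transport term contributes $\int_{\mathbb S}u_\varepsilon\partial_x\beta(q_\varepsilon)\,dx=-\int_{\mathbb S}q_\varepsilon\beta(q_\varepsilon)\,dx$ by periodicity, this produces the identity
\[
\frac{d}{dt}\int_{\mathbb S}\beta(q_\varepsilon)\,dx
=\int_{\mathbb S}\Big(q_\varepsilon\beta(q_\varepsilon)-\tfrac12 q_\varepsilon^2\beta'(q_\varepsilon)+f_\varepsilon\,\beta'(q_\varepsilon)\Big)\,dx-\varepsilon\int_{\mathbb S}\beta''(q_\varepsilon)(\partial_x q_\varepsilon)^2\,dx .
\]

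The next step is to choose $\beta$ so as to extract the power $|q_\varepsilon|^{2+\alpha}$. For a weight behaving like $\tfrac1{1+\alpha}|q|^{1+\alpha}$ for large $|q|$ one has $\beta'(q)\sim|q|^\alpha\operatorname{sgn}q$, whence $q\beta(q)-\tfrac12 q^2\beta'(q)\sim\big(\tfrac1{1+\alpha}-\tfrac12\big)\operatorname{sgn}(q)\,|q|^{2+\alpha}$; the coefficient $\tfrac1{1+\alpha}-\tfrac12=\tfrac{1-\alpha}{2(1+\alpha)}$ is strictly positive \emph{precisely because} $\alpha<1$, and this favorable sign is exactly what makes the scheme close (so $\alpha<1$, i.e.\ $p<3$, is where the restriction enters). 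To keep signs separated I would use two one-sided weights: a nonnegative convex $\beta_+$ supported in $\{q>0\}$, regularized near the origin so that $\beta_+''$ is bounded, yielding $+\big(\tfrac1{1+\alpha}-\tfrac12\big)q_+^{2+\alpha}$; and a nonpositive concave $\beta_-$ supported in $\{q<0\}$, yielding $+\big(\tfrac1{1+\alpha}-\tfrac12\big)q_-^{2+\alpha}$ once the sign of $q$ is accounted for. Integrating the identity on $[0,T]$ then bounds $\int_0^T\!\!\int_{\mathbb S}|q_\varepsilon|^{2+\alpha}$ by the boundary terms $\int_{\mathbb S}\beta_\pm(q_\varepsilon(T))\,dx$ and $\int_{\mathbb S}\beta_\pm(q_\varepsilon(0))\,dx$, the forcing contribution $\int_0^T\!\!\int_{\mathbb S}f_\varepsilon\beta_\pm'(q_\varepsilon)$, and the viscous term $\varepsilon\int_0^T\!\!\int_{\mathbb S}\beta_\pm''(q_\varepsilon)(\partial_x q_\varepsilon)^2$.

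Each of these is controlled $\varepsilon$-uniformly. Since $1+\alpha<2$ and $\mathbb S$ has unit measure, Jensen/H\"older gives $\int_{\mathbb S}|q_\varepsilon(t)|^{1+\alpha}\,dx\le\|q_\varepsilon(t)\|_{L^2}^{1+\alpha}\le\mu_1^{1+\alpha}$, so all boundary terms are bounded (the data term uses only $\|\partial_x u_{0\varepsilon}\|_{L^2}\le\mu_1$ from \eqref{regu-1-2}); the forcing term is estimated by $\tfrac12 L_0^2\int_0^T\|q_\varepsilon\|_{L^\alpha}^\alpha\,dt\le\tfrac12 L_0^2\,\mu_1^\alpha\,T$ using $\|f_\varepsilon\|_{L^\infty}\le\tfrac12 L_0^2$ and $|\beta_\pm'(q)|\lesssim|q|^\alpha$; and the regularization region $\{|q_\varepsilon|\le 1\}$ only adds a harmless $CT$. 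The step I expect to be the main obstacle is the viscous term for the convex weight $\beta_+$: there $-\varepsilon\int\beta_+''(\partial_x q_\varepsilon)^2$ has the \emph{unfavorable} sign, so it must be moved to the right-hand side and estimated by $\|\beta_+''\|_{L^\infty}\,\varepsilon\int_0^T\!\!\int_{\mathbb S}(\partial_x q_\varepsilon)^2\le\tfrac12\|\beta_+''\|_{L^\infty}\mu_1^2$. This is exactly why $\beta_+$ must be regularized to have a bounded second derivative and why the dissipation bound in \eqref{identity-2} is indispensable; for the concave $\beta_-$ the analogous term has the favorable sign and is simply discarded. Adding the two one-sided estimates yields \eqref{high-integrability} with a constant depending only on $\mu_0,\mu_1,\kappa$ (hence on $\|u_0\|_{H^1}$), on $\alpha$, and on $T$, as required.
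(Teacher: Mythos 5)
Your proposal is correct and is essentially the paper's own argument: the paper multiplies \eqref{one-derivative-1} by $\theta'(q_{\varepsilon})$ for the single two-sided weight $\theta(\xi)=\xi(|\xi|+1)^{\alpha}$ of Lemma \ref{lemma-cutoff-1}, integrates over $[0,T]\times\mathbb{S}$, and closes with exactly your three uniform bounds (the energy and viscous-dissipation bounds from \eqref{identity-2} and the $L^{\infty}$ forcing bound \eqref{integral-1-3}), the role of $\alpha<1$ entering through $\xi\theta(\xi)-\tfrac12\xi^{2}\theta'(\xi)\geq\tfrac{1-\alpha}{2}\xi^{2}(|\xi|+1)^{\alpha}$ just as in your coefficient $\tfrac{1-\alpha}{2(1+\alpha)}$. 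Your only deviation --- splitting into one-sided convex/concave weights so the concave viscous term can be discarded by sign --- is cosmetic, since the paper handles that term the same way you handle the convex half, bounding it in absolute value by $\alpha\mu_1^{2}$ via $|\theta''|\leq 2\alpha$ and the dissipation estimate.
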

First, a direct computation yields that
\begin{lem}\label{lemma-cutoff-1}(\cite{chk})
For every $0<\alpha <1$, the function $\theta(\xi)=
\xi(|\xi|+1)^{\alpha}$ with $\xi \in \mathbb{R}$ satisfies the
following property.
\begin{equation*}
\theta'(\xi)=\left((\alpha+1)|\xi|+1\right)(|\xi|+1)^{\alpha-1},
\end{equation*}
\begin{equation*}
\theta''(\xi)=\alpha(\alpha+1)\mbox{sign}(\xi)(|\xi|+1)^{\alpha-1}
+\alpha(1-\alpha)\mbox{sign}(\xi)(|\xi|+1)^{\alpha-2},
\end{equation*}
\begin{equation*}
\xi\theta(\xi)-\frac{1}{2}\xi^2\theta'(\xi)\geq
\frac{1-\alpha}{2}\xi^2(|\xi|+1)^{\alpha}
\end{equation*}
and
\begin{equation*}
|\theta(\xi)|\leq |\xi|^{\alpha+1}+|\xi|,\quad |\theta'(\xi)|\leq
(\alpha+1)|\xi|+1, \quad |\theta''(\xi)|\leq 2\alpha.
\end{equation*}
\end{lem}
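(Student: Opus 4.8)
The statement collects elementary one-variable calculus facts about $\theta(\xi)=\xi(|\xi|+1)^{\alpha}$, so the plan is a direct computation; the only genuine subtlety is the non-smoothness of $|\xi|$ at the origin, which I would handle by working separately on $\{\xi>0\}$ and $\{\xi<0\}$ and then recording what survives at $\xi=0$. On $\{\xi>0\}$ one has $\theta(\xi)=\xi(\xi+1)^{\alpha}$, and the product rule gives $\theta'(\xi)=(\xi+1)^{\alpha}+\alpha\xi(\xi+1)^{\alpha-1}=(\xi+1)^{\alpha-1}\bigl((\alpha+1)\xi+1\bigr)$; on $\{\xi<0\}$, writing $|\xi|=-\xi$, the analogous computation gives $\theta'(\xi)=(|\xi|+1)^{\alpha-1}\bigl((\alpha+1)|\xi|+1\bigr)$, so the single formula claimed for $\theta'$ holds on both half-lines. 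Since both one-sided limits equal $1$ and the difference quotient of $\theta$ at $0$ also tends to $(|\xi|+1)^{\alpha}\to 1$, the formula extends continuously to $\xi=0$; thus $\theta\in C^1(\mathbb{R})$ and the stated expression for $\theta'$ is valid everywhere.

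For the second derivative I would differentiate $\theta'(\xi)=\bigl((\alpha+1)|\xi|+1\bigr)(|\xi|+1)^{\alpha-1}$ on each half-line. On $\{\xi>0\}$ the product rule and collecting the common factor $(\xi+1)^{\alpha-2}$ yields $\theta''(\xi)=(\xi+1)^{\alpha-2}\bigl(\alpha(\alpha+1)\xi+2\alpha\bigr)$, which one checks equals $\alpha(\alpha+1)(\xi+1)^{\alpha-1}+\alpha(1-\alpha)(\xi+1)^{\alpha-2}$; inserting $\mbox{sign}(\xi)=+1$ this is exactly the claimed formula, and the $\xi<0$ case is identical up to the factor $\mbox{sign}(\xi)=-1$. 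The expression has a jump across $\xi=0$, so $\theta$ is only $C^1$, and the $\theta''$ identity is understood for $\xi\neq 0$, which is all that is needed later.

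The pointwise inequality is the one fact with content, and I would prove it by factoring. Using $\xi\theta(\xi)=\xi^2(|\xi|+1)^{\alpha}$ and the formula for $\theta'$, the left-hand side equals $\xi^2(|\xi|+1)^{\alpha-1}\bigl[(|\xi|+1)-\tfrac12((\alpha+1)|\xi|+1)\bigr]=\xi^2(|\xi|+1)^{\alpha-1}\bigl[\tfrac{1-\alpha}{2}|\xi|+\tfrac12\bigr]$, while the right-hand side is $\xi^2(|\xi|+1)^{\alpha-1}\bigl[\tfrac{1-\alpha}{2}|\xi|+\tfrac{1-\alpha}{2}\bigr]$; subtracting leaves the manifestly nonnegative remainder $\tfrac{\alpha}{2}\xi^2(|\xi|+1)^{\alpha-1}$, which proves the inequality with an explicit positive surplus. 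Finally, for the three bounds: $|\theta(\xi)|=|\xi|(|\xi|+1)^{\alpha}\le|\xi|(|\xi|^{\alpha}+1)=|\xi|^{\alpha+1}+|\xi|$, using the subadditivity $(a+b)^{\alpha}\le a^{\alpha}+b^{\alpha}$ valid for $0<\alpha\le 1$ and $a,b\ge 0$; since $\theta'>0$ and $(|\xi|+1)^{\alpha-1}\le 1$ (negative exponent, base $\ge 1$) we get $|\theta'(\xi)|=\theta'(\xi)\le(\alpha+1)|\xi|+1$; and because both summands of $\theta''$ are nonnegative and carry the factors $(|\xi|+1)^{\alpha-1}\le 1$ and $(|\xi|+1)^{\alpha-2}\le 1$, we obtain $|\theta''(\xi)|\le\alpha(\alpha+1)+\alpha(1-\alpha)=2\alpha$. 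The main obstacle is purely organizational, namely keeping the sign conventions and the $\xi=0$ behavior straight, rather than anything analytical.
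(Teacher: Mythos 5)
Your computation is correct in every detail, including the factorization of the key inequality with the explicit surplus $\tfrac{\alpha}{2}\xi^2(|\xi|+1)^{\alpha-1}$ and the observation that $\theta$ is only $C^1$ so the $\theta''$ formula holds for $\xi\neq 0$ (which is all that is used, since $\theta''$ enters only under an integral). The paper itself gives no proof---it cites \cite{chk} and dismisses the lemma as ``a direct computation''---and your argument is exactly that direct computation, carefully carried out.
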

We are now  in a position to prove Proposition \ref{prop-high-inte}.

\begin{proof}[Proof of Proposition \ref{prop-high-inte}]
Multiplying the equation in \eqref{one-derivative-1} by
$\theta'(q_{\varepsilon})$, we get
\begin{equation*}
\begin{split}
\partial_t \theta(q_{\varepsilon})+u_{\varepsilon}\partial_x
\theta(q_{\varepsilon})
&-\varepsilon\theta'(q_{\varepsilon})\partial_x^2q_{\varepsilon}
+\frac{1}{2}\theta'(q_{\varepsilon})(q_{\varepsilon})^2\\
&
=\theta'(q_{\varepsilon})\left(2(u_{\varepsilon}-\mu_0)(\mu_0+\kappa)
-\frac{1}{2}\mu(q_{\varepsilon}^2)\right).
\end{split}
\end{equation*}
Integrating the above equation over $[0, T] \times \mathbb{S}$, we
obtain by integration by parts that
\begin{equation}\label{inte-bdd-0}
\begin{split}
&\int_0^T\int_{\mathbb{S}}[q_{\varepsilon} \theta(q_{\varepsilon}) -
\frac{1}{2}(q_{\varepsilon})^2\theta'(q_{\varepsilon})]\,
dxd\tau\\
&=\int_{\mathbb{S}} \theta(q_{\varepsilon})(T) \,
dx-\int_{\mathbb{S}} \theta(q_{\varepsilon})(0) \,
dx+\varepsilon\int_0^T\int_{\mathbb{S}}(\partial_x
q_{\varepsilon})^2 \theta''(q_{\varepsilon}) \,
dxd\tau\\
&\qquad-\int_0^T\int_{\mathbb{S}}\left(2(u_{\varepsilon}-\mu_0)(\mu_0+\kappa)
-\frac{1}{2}\mu(q_{\varepsilon}^2)\right) \theta'(q_{\varepsilon})
\, dxd\tau.
\end{split}
\end{equation}
It follows from Lemma \ref{lemma-cutoff-1}  that
\begin{equation}\label{inte-bdd-1}
\begin{split}
\int_0^T\int_{\mathbb{S}}\left(q_{\varepsilon}
\theta(q_{\varepsilon}) -
\frac{1}{2}(q_{\varepsilon})^2\theta'(q_{\varepsilon})\right)\,
dxd\tau \geq \frac{1-\alpha}{2}\int_0^T\int_{\mathbb{S}}
|q_{\varepsilon}|^{2+\alpha}\, dxd\tau
\end{split}
\end{equation}
and
\begin{equation}\label{inte-bdd-2}
\begin{split}
|\int_{\mathbb{S}} \theta(q_{\varepsilon})(T) \, dx| &\leq
\int_{\mathbb{S}}(|q_{\varepsilon}(T)|^{1+\alpha}+|q_{\varepsilon}(T)|)
\,dx \\
&\leq \|q_{\varepsilon}(T)\|_{L^2(\mathbb{S})}^{1+\alpha}
+\|q_{\varepsilon}(T)\|_{L^2(\mathbb{S})} \leq
\mu_1^{1+\alpha}+\mu_1.
\end{split}
\end{equation}
Similarly, we have
\begin{equation}\label{inte-bdd-3}
\begin{split}
|\int_{\mathbb{S}} \theta(q_{\varepsilon})(0) \, dx| \leq
\mu_1^{1+\alpha}+\mu_1.
\end{split}
\end{equation}
On the other hand, thanks to \eqref{identity-2} and
\eqref{integral-1-3}, together with Lemma \ref{lemma-cutoff-1}
applied again, we deduce that
\begin{equation}\label{inte-bdd-4}
\begin{split}
\varepsilon\int_0^T\int_{\mathbb{S}}(\partial_x q_{\varepsilon})^2
\theta''(q_{\varepsilon}) \, dxd\tau \leq2 \alpha \varepsilon
\|\partial_x q_{\varepsilon}\|_{L^{2}([0, T] \times \mathbb{S})}^2
\leq \alpha \mu_1^2
\end{split}
\end{equation}and
\begin{equation}\label{inte-bdd-5}
\begin{split}
&\int_0^T\int_{\mathbb{S}}\left(2(u_{\varepsilon}-\mu_0)(\mu_0+\kappa)
-\frac{1}{2}\mu(q_{\varepsilon}^2))
\theta'(q_{\varepsilon}\right) \, dxd\tau \\
&\leq
\|2(u_{\varepsilon}-\mu_0)(\mu_0+\kappa)-\frac{1}{2}\mu(q_{\varepsilon}^2)\|_{L^{\infty}([0,
T] \times \mathbb{S})}\int_0^T\int_{\mathbb{S}}
\left((1+\alpha)|q_{\varepsilon}|+1\right)\, dxd\tau \\
&\leq \frac{L_0^2}{2}T\left(1+(1+\alpha)\mu_1\right).
\end{split}
\end{equation}
Therefore, plunging \eqref{inte-bdd-1}-\eqref{inte-bdd-5} into
\eqref{inte-bdd-0}, we get \eqref{high-integrability}, which completes
the proof of Proposition \ref{prop-high-inte}.
\end{proof}

\subsection{Precompactness}

In this subsection, we drive the theory of Young measures (see Lemma
4.2 in \cite{xz}, also \cite{young}, \cite{zhangz},\cite{zhangz-3},
\cite{zhangz-2}) to obtain the necessary compactness of the viscous
approximate solution $u_{\varepsilon}(t, x)$. We first state a
compactness lemma.
\begin{lem}\label{lemma-compactness-1}(\cite{simon})
Let $X$, $B$, $Y$ be three Banach spaces and satisfy $X
\hookrightarrow \hookrightarrow B \hookrightarrow Y$, $1 \leq p \leq
\infty$, $T>0$. Assume that a set $F$ of functions $f$ is bounded in
$L^p([0, T], X)$ and satisfies that
\begin{equation*}
\begin{split}
\|f(\cdot+h)-f(\cdot)\|_{L^{p}(0, T-h; Y)} \rightarrow 0 \quad
\mbox{as} \quad h \rightarrow 0, \quad \mbox{uniformly for}\quad f
\in F.
\end{split}
\end{equation*}
Then $F$ is relatively compact in $L^p([0, T], B)$ (and in $C([0,
T], B)$ if $p= \infty$).
\end{lem}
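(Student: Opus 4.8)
The statement is the Aubin--Lions--Simon compactness lemma, and the plan is to prove it by combining an Ehrling-type interpolation inequality with the vector-valued Riesz--Fr\'echet--Kolmogorov compactness criterion realized through time-averaging. The first move is to downgrade the target space from the intermediate space $B$ to the weak space $Y$. Since $X \hookrightarrow\hookrightarrow B \hookrightarrow Y$ with the first embedding compact and the second continuous and injective, a standard contradiction argument (take $v_n$ with $\|v_n\|_B=1$ violating the claimed inequality, extract a $B$-convergent subsequence from the $X$-bounded sequence, and observe that its $Y$-limit is $0$) yields: for every $\eta>0$ there is $C_\eta>0$ with $\|v\|_B \le \eta\|v\|_X + C_\eta\|v\|_Y$ for all $v\in X$. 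Applying this pointwise in $t$ and integrating gives, for $f,g\in F$,
\[
\|f-g\|_{L^p(0,T;B)} \le \eta\,\|f-g\|_{L^p(0,T;X)} + C_\eta\,\|f-g\|_{L^p(0,T;Y)}.
\]
Because $F$ is bounded, say by $M$, in $L^p(0,T;X)$, the first term is at most $2M\eta$. Hence to cover $F$ by finitely many $B$-balls of a prescribed radius $\delta$ it suffices to fix $\eta=\delta/(4M)$ and then cover $F$ by finitely many $L^p(0,T;Y)$-balls of radius $\delta/(2C_\eta)$. Everything therefore reduces to proving that $F$ is relatively compact in $L^p(0,T;Y)$.

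For the reduced problem I would use the characterization of relative compactness in $L^p(0,T;Y)$ (the Banach-valued Riesz--Fr\'echet--Kolmogorov criterion): a set $F$ is relatively compact there provided that (i) for all $0<t_1<t_2<T$ the set of integral means $\{\int_{t_1}^{t_2} f(t)\,dt : f\in F\}$ is relatively compact in $Y$, and (ii) $\|\tau_h f-f\|_{L^p(0,T-h;Y)}\to 0$ as $h\to 0$, uniformly for $f\in F$, where $\tau_h f(t):=f(t+h)$. Condition (ii) is precisely the hypothesis of the lemma. For condition (i), note that $X\hookrightarrow\hookrightarrow Y$ (composition of the compact and continuous embeddings), and by H\"older's inequality $\|\int_{t_1}^{t_2} f\|_X \le (t_2-t_1)^{1/p'}\|f\|_{L^p(0,T;X)}\le C\,M$, so the integral means are bounded in $X$ and hence relatively compact in $Y$.

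The substance of the argument is then the proof of the criterion itself, which I would carry out by the Steklov average $f_h(t):=\frac1h\int_t^{t+h} f(s)\,ds$, $t\in[0,T-h]$. For fixed $h$, the family $\{f_h\}_{f\in F}$ is equibounded in $X$ pointwise (by Jensen/H\"older), hence pointwise relatively compact in $Y$, and it is equicontinuous into $Y$ because $\|f_h(t+\tau)-f_h(t)\|_Y \le \frac1h\|\tau_\tau f-f\|_{L^1(0,T-h;Y)}$, which is uniformly small by (ii). Arzel\`a--Ascoli then makes $\{f_h\}$ relatively compact in $C([0,T-h];Y)$, hence in $L^p(0,T-h;Y)$; and since $\|f_h-f\|_{L^p(0,T-h;Y)}\le \sup_{0<s<h}\|\tau_s f-f\|_{L^p(0,T-h;Y)}\to 0$ uniformly, the family $F$ restricted to $[0,T-h]$ is totally bounded in $L^p(0,T-h;Y)$.

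The main obstacle is passing from the shrinking interval $[0,T-h]$ to the full interval $[0,T]$ while keeping every estimate uniform over $F$. The resolution is to note that a totally bounded set in $L^p$ has equi-absolutely-continuous norms, so the contribution $\|f\|_{L^p(T-2h,T-h;Y)}$ over the small good-region slice is uniformly small; combining this with the uniformly small translation term via $\|f\|_{L^p(T-h,T;Y)}\le \|\tau_{-h}f-f\|_{L^p(T-h,T;Y)}+\|f\|_{L^p(T-2h,T-h;Y)}$ shows the endpoint tail is uniformly negligible as $h\to 0$, upgrading total boundedness to the whole $[0,T]$. The Ehrling and Arzel\`a--Ascoli steps are routine; it is this endpoint-plus-uniformity bookkeeping that requires care. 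Finally, for $p=\infty$ the identical averaging scheme yields uniform convergence $f_h\to f$ and equicontinuity in the sup-norm, giving relative compactness directly in $C([0,T];B)$, with $\sup_t\|\cdot\|$ replacing the $L^p$ norm throughout.
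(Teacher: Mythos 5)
The paper offers no proof of this lemma at all: it is quoted verbatim from Simon's compactness paper \cite{simon}, so there is no in-paper argument to compare yours against. What you have written is, in substance, a reconstruction of Simon's own route: an Ehrling-type interpolation inequality reducing compactness in $L^p(0,T;B)$ to compactness in $L^p(0,T;Y)$, then his characterization of relatively compact sets in $L^p$ of a Banach space via integral means and uniform smallness of time translates, proved by Steklov averaging $f_h(t)=\frac1h\int_t^{t+h}f(s)\,ds$, vector-valued Arzel\`a--Ascoli, and the uniform bound $\|f_h-f\|_{L^p(0,T-h;Y)}\le\sup_{0<s<h}\|\tau_sf-f\|_{L^p(0,T-h;Y)}$. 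These steps, including the verification of the integral-mean condition from boundedness in $L^p(0,T;X)$ and compactness of $X\hookrightarrow Y$, and the two-$\epsilon$ covering argument behind the Ehrling reduction, are all correctly stated and correctly justified in your outline.

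The one place where your bookkeeping, as literally written, does not close is the right-endpoint tail. You bound $\|f\|_{L^p(T-h,T;Y)}$ by $\|\tau_{-h}f-f\|_{L^p(T-h,T;Y)}+\|f\|_{L^p(T-2h,T-h;Y)}$ and then invoke equi-absolute continuity of the totally bounded family $F|_{[0,T-h]}$ to make the last term small ``as $h\to0$''. But the equi-absolute-continuity modulus $\delta_h(\epsilon)$ furnished by total boundedness on $[0,T-h]$ depends on $h$, while the exceptional set $[T-2h,T-h]$ has measure exactly $h$; nothing guarantees $h<\delta_h(\epsilon)$, so the two parameters are circularly coupled. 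The repair stays entirely within your toolkit: decouple them by using a \emph{fixed} shift rather than the tail width. First choose $h_0$ by hypothesis so that $\|\tau_{h_0}f-f\|_{L^p(0,T-h_0;Y)}<\epsilon$ uniformly on $F$, and let $\delta$ be the equi-absolute-continuity modulus of the (already established) totally bounded set $F|_{[0,T-h_0]}$ in $L^p(0,T-h_0;Y)$. Then for any tail width $\eta<\min(h_0,\delta)$ write, for $t\in(T-\eta,T)$, $f(t)=\bigl(f(t)-f(t-h_0)\bigr)+f(t-h_0)$, which after a change of variables gives
\begin{equation*}
\|f\|_{L^p(T-\eta,T;Y)}\le \|\tau_{h_0}f-f\|_{L^p(0,T-h_0;Y)}+\|f\|_{L^p(T-\eta-h_0,\,T-h_0;Y)}<2\epsilon,
\end{equation*}
since the second interval has measure $\eta<\delta$ and lies inside $[0,T-h_0]$. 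With this substitution your argument is complete, including the $p=\infty$ case, where the Ehrling inequality together with the uniform $Y$-equicontinuity coming from the translation hypothesis yields equicontinuity into $B$ and Arzel\`a--Ascoli applies directly on $[0,T]$.
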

With this compactness lemma in hand, we can prove the weak
convergence property in $L^{\infty}(\mathbb{R}^+;
H^1({\mathbb{S}}))$.
\begin{prop}\label{prop-compactness-1}
There exist a subsequence $\{u_{\varepsilon_{j}}(t, x),\,
\mu((\partial_xu_{\varepsilon_j}(t, x))^2)\}$ of the sequence
$\{u_{\varepsilon}(t, x), \, \mu((\partial_xu_{\varepsilon}(t,
x))^2)\}$ and some functions $\{u(t, x), \,\Pi_1(t)\}$, $u \in
L^{\infty}(\mathbb{R}^+; H^1({\mathbb{S}}))$ and $\Pi_1(t) \in
L^{\infty}(\mathbb{R}^+)$, such that
\begin{equation}\label{conti-u}u_{\varepsilon_{j}}\rightarrow u \quad \mbox{as} \quad j \rightarrow +\infty \quad
\mbox{uniformly on each compact subset of} \quad \mathbb{R}^+ \times
\mathbb{S}
\end{equation} and
\begin{equation}\label{strong-p}\mu((\partial_xu_{\varepsilon_j}(t, x))^2)\rightarrow \Pi_1(t)
\quad \mbox{in} \quad L^{p}_{loc}(\mathbb{R}^+ )\quad \mbox{as}
\quad j \rightarrow +\infty \quad \forall \,\, 1
<p<+\infty.\end{equation}
\end{prop}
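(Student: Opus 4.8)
The plan is to derive the first convergence \eqref{conti-u} from the Aubin--Lions--Simon compactness Lemma \ref{lemma-compactness-1}, and the second convergence \eqref{strong-p} from the monotonicity encoded in the energy identity \eqref{identity-2} together with Helly's selection theorem. First I would record the uniform bounds. By Remark \ref{rmk-regu-1} and \eqref{identity-2} we have $\mu(u_\varepsilon(t))=\mu_0$ and $\|\partial_x u_\varepsilon(t)\|_{L^2}^2\le\|\partial_x u_{0\varepsilon}\|_{L^2}^2\le\mu_1^2$ for all $t\ge0$, so the a priori estimates \eqref{max-1-1}, \eqref{e2.2} and \eqref{e2.3} apply to $u_\varepsilon-\mu_0$ with the same constants $\mu_0,\mu_1$. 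Consequently $\{u_\varepsilon\}$ is bounded in $L^\infty(\mathbb{R}^+;H^1(\mathbb{S}))$ uniformly in $\varepsilon$.

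Next I would estimate the time regularity using the first equation of \eqref{appr-equation-1}, written as $\partial_t u_\varepsilon=-\tfrac12\partial_x(u_\varepsilon^2)-\partial_x P_\varepsilon+\partial_x(\varepsilon\,\partial_x u_\varepsilon)$, in $H^{-1}(\mathbb{S})$. The first term is controlled by $\|u_\varepsilon^2\|_{L^2}\le\|u_\varepsilon\|_{L^\infty}\|u_\varepsilon\|_{L^2}\le C$; the viscous term obeys $\|\partial_x(\varepsilon\,\partial_x u_\varepsilon)\|_{H^{-1}}\le\varepsilon\|\partial_x u_\varepsilon\|_{L^2}\le\varepsilon\mu_1$; and $\partial_x P_\varepsilon$ is bounded in $L^\infty(\mathbb{S})\hookrightarrow H^{-1}(\mathbb{S})$ directly from the explicit formula \eqref{formula-2.2} and the bounds above (every term there is controlled by $\|u_\varepsilon\|_{L^\infty}$ and $\mu_1$). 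Hence $\partial_t u_\varepsilon$ is bounded in $L^\infty(\mathbb{R}^+;H^{-1}(\mathbb{S}))$, which gives the uniform time-translate bound $\|u_\varepsilon(\cdot+h)-u_\varepsilon(\cdot)\|_{L^\infty(0,T-h;H^{-1})}\le Ch$. With the compact chain $H^1(\mathbb{S})\hookrightarrow\hookrightarrow C(\mathbb{S})\hookrightarrow H^{-1}(\mathbb{S})$, Lemma \ref{lemma-compactness-1} with $p=\infty$ yields relative compactness of $\{u_\varepsilon\}$ in $C([0,T];C(\mathbb{S}))$; a diagonal extraction over $T=1,2,\dots$ produces a subsequence $u_{\varepsilon_j}\to u$ uniformly on every compact subset of $\mathbb{R}^+\times\mathbb{S}$, proving \eqref{conti-u}. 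The uniform $H^1$ bound passes to the limit by weak lower semicontinuity, so $u\in L^\infty(\mathbb{R}^+;H^1(\mathbb{S}))$.

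For \eqref{strong-p} I would set $g_\varepsilon(t):=\mu((\partial_x u_\varepsilon(t))^2)=\int_{\mathbb{S}}(\partial_x u_\varepsilon)^2(t,x)\,dx$. Testing the $q_\varepsilon$-equation \eqref{one-derivative-1} with $q_\varepsilon=\partial_x u_\varepsilon$ and using periodicity (so that the cubic term and the forcing term integrate to zero)---equivalently, reading off the differential form of \eqref{identity-2}---gives $\frac{d}{dt}g_\varepsilon=-2\varepsilon\|\partial_x^2 u_\varepsilon\|_{L^2}^2\le0$, so each $g_\varepsilon$ is non-increasing with values in $[0,\mu_1^2]$. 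Applying Helly's selection theorem along the subsequence $\{\varepsilon_j\}$ already fixed (with a diagonal argument over $[0,N]$, $N\to\infty$), I extract a further subsequence, not relabelled, converging pointwise on $\mathbb{R}^+$ to a non-increasing limit $\Pi_1\in L^\infty(\mathbb{R}^+)$. Since $0\le g_{\varepsilon_j}\le\mu_1^2$, the dominated convergence theorem then upgrades this to convergence in $L^p(0,T)$ for every $T>0$ and every $1\le p<\infty$, which is \eqref{strong-p}.

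The main obstacle is the first convergence: one must avoid losing control of the viscous term, and the key observation is to keep $\varepsilon\,\partial_x^2 u_\varepsilon=\partial_x(\varepsilon\,\partial_x u_\varepsilon)$ inside $L^\infty(\mathbb{R}^+;H^{-1}(\mathbb{S}))$ using only the uniform $L^2$ bound on $\partial_x u_\varepsilon$, so that the time-translate estimate holds in the strong ($p=\infty$) form needed for genuine uniform convergence on compact space-time sets, and to choose $C(\mathbb{S})$ as the intermediate space so that \eqref{conti-u} follows. By contrast, the second convergence is essentially immediate once the monotonicity of $g_\varepsilon$ furnished by \eqref{identity-2} is recognized, and no Young-measure argument is needed at this stage.
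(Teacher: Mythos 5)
Your proposal is correct, and it proves both convergences, but the two halves compare differently with the paper. For \eqref{conti-u} you follow essentially the paper's route: the same uniform $L^\infty(\mathbb{R}^+;H^1)$ bound from \eqref{identity-2}, the same Simon compactness lemma (Lemma \ref{lemma-compactness-1}) with $p=\infty$ and the compact embedding $H^1(\mathbb{S})\hookrightarrow\hookrightarrow C(\mathbb{S})$, and the same diagonal extraction; the only variation is the choice of the large space $Y$ and the time-regularity estimate. The paper takes $Y=L^2(\mathbb{S})$ and bounds $\partial_t u_{\varepsilon}$ in $L^2([0,T]\times\mathbb{S})$, controlling the viscous term through the dissipation bound $\sqrt{\varepsilon}\,\|\partial_x^2 u_{\varepsilon}\|_{L^2([0,T]\times\mathbb{S})}\leq\mu_1$ coming from \eqref{identity-2}, whereas you take $Y=H^{-1}(\mathbb{S})$ and bound $\partial_t u_{\varepsilon}$ in $L^{\infty}(\mathbb{R}^+;H^{-1})$ by keeping $\varepsilon\partial_x^2u_{\varepsilon}=\partial_x(\varepsilon\partial_x u_{\varepsilon})$ in divergence form, so that only the uniform $L^2$ bound on $\partial_x u_{\varepsilon}$ is needed; both are valid, and your variant dispenses with the dissipation estimate entirely in this step. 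For \eqref{strong-p} your route is genuinely different. The paper records $\|\mu((\partial_xu_{\varepsilon_j})^2)\|_{L^{\infty}(\mathbb{R}^+)}\leq\mu_1^2$ together with a quantitative bound on $\frac{d}{dt}\mu((\partial_xu_{\varepsilon_j})^2)=-2\varepsilon\|\partial_x^2u_{\varepsilon_j}\|_{L^2}^2$ and then invokes the Lions--Aubin lemma for the scalar sequence; you use the same derivative identity (your derivation by testing \eqref{one-derivative-1} with $q_{\varepsilon}$ is correct, the cubic and source terms do integrate to zero by periodicity) but exploit only its \emph{sign}: each $g_{\varepsilon}$ is non-increasing with values in $[0,\mu_1^2]$, so Helly's selection theorem plus a diagonal argument gives pointwise convergence to a bounded monotone limit $\Pi_1$, and dominated convergence upgrades this to $L^p(0,T)$ for every finite $p$ and $T$. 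Your argument is more elementary and in fact more robust, since it requires no integrability bound on the derivative (the paper's stated $L^2(\mathbb{R}^+)$ bound for $\frac{d}{dt}\mu((\partial_xu_{\varepsilon_j})^2)$ is the least transparent point of its proof), at the mild cost of passing to a further subsequence and of producing a limit that is a priori monotone rather than merely bounded --- which is harmless for the proposition as stated.
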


\begin{proof}
According to \eqref{identity-2}, one has that $ u_{\varepsilon} \in L^{\infty}(\mathbb{R}^{+}; H^1(\mathbb{S})), $
and $ u_{\varepsilon} $ is uniformly bounded in $H^1(\mathbb{S}). $
While the first equation in \eqref{appr-equation-1} yields
\begin{equation*}\partial_t u_{\varepsilon}=\varepsilon \partial_x^2
u_{\varepsilon}-u_{\varepsilon} \partial_x
u_{\varepsilon}-\partial_x P_{\varepsilon}.\end{equation*} Thanks to
\eqref{formula-2.2}, \eqref{identity-2} and \eqref{e2.2}, together
with H\"{o}lder's inequality applied, we obtain
\begin{equation*}\label{bdd-p-infty-1}
\begin{split}
\|\partial_xP_{\varepsilon}\|_{L^{2}(\mathbb{S})}\leq
\|\partial_xP_{\varepsilon}\|_{L^{\infty}(\mathbb{S})} \leq
\frac{9}{4}\left(\mu_0^2+(\mu_0+\kappa)^2\right)
+\left(\frac{1}{4}+\frac{1}{2\pi^2}\right)\mu_1^2,
\end{split}
\end{equation*}
\begin{equation*}
\begin{split}
\|u_{\varepsilon} \partial_x u_{\varepsilon}\|_{L^{2}(\mathbb{S})}
&\leq \|u_{\varepsilon} \|_{L^{\infty}}\|\partial_x
u_{\varepsilon}\|_{L^{2}}\leq
\mu_1\sqrt{\mu_0^2+\frac{1}{4\pi^2}\mu_1^2} \quad \mbox{and}
\end{split}
\end{equation*}
\begin{equation*}
\begin{split}
\sqrt{\varepsilon} \|\partial_x^2 u_{\varepsilon}\|_{L^{2}([0, T]
\times \mathbb{S})} \leq \mu_1 \quad \mbox{for} \quad \forall \quad
T>0.
\end{split}
\end{equation*}
So, we have
\begin{equation*}\label{bdd-time-u}\|\partial_t
u_{\varepsilon}\|_{L^{2}([0, T] \times \mathbb{S})}^2 \leq C(\mu_0,
\mu_1, T)\end{equation*} with the constant $C(\mu_0, \mu_1, T)$
independent of $\varepsilon$, and consequently,
\begin{equation*}\partial_t u_{\varepsilon} \in L^2_{loc}(\mathbb{R}^{+};
L^2(\mathbb{S})).
\end{equation*}
From this, fixing $T>0$, we get for $0 \leq t, \, s \leq T$,
$$\|u_{\varepsilon}(t)-u_{\varepsilon}(s)\|_{L^{2}(\mathbb{S})}^2
=\int_{\mathbb{S}}\left(\int_s^{t} \partial_t u_{\varepsilon}(\tau,
x)\, d\tau\right)^2 \, dx \leq|t-s|^{\frac{1}{2}}  \|\partial_t
u_{\varepsilon}\|_{L^{2}([0, T] \times \mathbb{S})}^2.$$ Therefore,
applying Lemma \ref{lemma-compactness-1}, together with the
embedding theorem $H^1(\mathbb{S}) \hookrightarrow\hookrightarrow
C(\mathbb{S}) \hookrightarrow L^{2}(\mathbb{S})$, we deduce
\eqref{conti-u}.

On the other hand, we get from the second equation in
\eqref{identity-2} that
\begin{equation*}\label{P-2-identity-1-a}
\|\mu((\partial_xu_{\varepsilon_j})^2)\|_{L^{\infty}(\mathbb{R}^+)}
\leq \mu_1^2 \quad \mbox{and}
\end{equation*}
\begin{equation}\label{P-2-identity-1}
\frac{d}{dt} \mu((\partial_xu_{\varepsilon_j})^2) =-2\varepsilon
\|\partial_x^2u_{\varepsilon_j}\|_{L^2}^2,
\end{equation}
which together with \eqref{identity-2} once again implies
\begin{equation*}\label{P-2-identity-1-b}
\|\frac{d}{dt}
\mu((\partial_xu_{\varepsilon_j})^2)\|_{L^2(\mathbb{R}^+)} \leq
\mu_1^2.
\end{equation*}
Therefore, the standard Lions-Aubin's Lemma applied implies
\eqref{strong-p}. This completes the proof of Proposition
\ref{prop-compactness-1}.
\end{proof}

Now let $\mu_{t, x}(\lambda)$ be the Young measure associated with
$\{q_{\varepsilon}\}_{\varepsilon>0}$. The theory of the Young
measures (see Lemma 4.2 in \cite{xz}, also \cite{jmr}, \cite{young},
\cite{zhangz}, \cite{zhangz-3}, \cite{zhangz-4}, \cite{zhangz-2})
applied implies that, for any continuous function $f=f(\lambda)$
such that $f(\lambda)=o(|\lambda|^{r})$ and
$\partial_{\lambda}f(\lambda)=o(|\lambda|^{r-1})$ as
$|\lambda|\rightarrow +\infty$ and $r < 2$, and for any $\psi \in
L^{s}(\mathbb{S})$ with $\frac{1}{s}+\frac{r}{2}=1$, there holds
\begin{equation}\label{young-1-1}
\lim_{\varepsilon \rightarrow
0}\int_{\mathbb{S}}f(q_{\varepsilon}(t, x))\psi(x)
\,dx=\int_{\mathbb{S}}\overline{f(q)}\psi(x) \,dx
\end{equation}
uniformly in every compact subset of $\mathbb{R}^{+}$. Here
\begin{equation*}
\overline{f(q)}:=\int_{\mathbb{S}}f(x) \,d\mu_{t, x}(\lambda) \in
C([0, \infty); L^{r'/r}(\mathbb{S}))
\end{equation*}
with $r' \in (r, 2)$. Moreover, for all $T > 0$, there hold
\begin{equation*}
\lim_{\varepsilon \rightarrow
0}\int_{0}^{T}\int_{\mathbb{S}}g(q_{\varepsilon})\varphi
\,dxdt=\int_{0}^{T}\int_{\mathbb{S}}\overline{g(q)}\varphi \,dxdt
\end{equation*}
and
\begin{equation*}
\lambda \in L_{loc}^{\ell}(\mathbb{R}^{+}\times \mathbb{S}\times
\mathbb{R}, \, dt \otimes\,dx\otimes\,d\mu_{t, x}(\lambda))) \quad
\mbox{for all} \quad \ell <3,
\end{equation*}
where $g=g(t, x, \lambda)$ is a continuous function satisfying $g =
o(|\lambda|^{\ell})$ as $|\lambda|\rightarrow +\infty$  for some
$\ell < 3$, and  with $\frac{\ell}{3} + \frac{1}{m}< 1$. And also
\begin{equation}\label{u-q-re-1}
\lambda \in L^{\infty}\left(\mathbb{R}^{+}; L^2(\mathbb{S}\times
\mathbb{R}, \, \,dx\otimes\,d\mu_{t, x}(\lambda))\right) \quad
\mbox{and} \quad \overline{q}(t, x)=\partial_x u(t, x).
\end{equation}

We are in a position to study the structure of the Young measure
$\mu_{t, x}(\lambda)$.
\begin{lem}\label{lem-renormalize-1} Let $E=E(\lambda)\in W^{2, \infty}(\mathbb{R})$
 be a given convex function satisfying $E(\lambda)=O(|\lambda|)$ and the first derivative $DE(\lambda)=O(1)$
  for $|\lambda|\rightarrow +\infty$. Then there holds
\begin{equation}\label{q-equation-1}
\partial_t \overline{E(q)}+\partial_x (u \overline{E(q)})\leq \overline{qE(q)}
-\frac{1}{2}\overline{q^2DE(q)}+\overline{DE(q)}[2(u-\mu_0)(\mu_0+\kappa)-\frac{1}{2}\Pi_1]
\end{equation}
in the sense of distributions on $\mathbb{R}^{+} \times \mathbb{S}$.
\end{lem}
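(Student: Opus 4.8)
The plan is to establish the inequality first at the level of the viscous approximations $q_\varepsilon=\partial_x u_\varepsilon$ governed by \eqref{one-derivative-1}, and then to pass to the limit $\varepsilon\to 0$ using the Young measure representation. First I would multiply \eqref{one-derivative-1} by $DE(q_\varepsilon)$ and use the chain-rule identities $DE(q_\varepsilon)\partial_t q_\varepsilon=\partial_t E(q_\varepsilon)$, $DE(q_\varepsilon)\partial_x q_\varepsilon=\partial_x E(q_\varepsilon)$ and $DE(q_\varepsilon)\partial_x^2 q_\varepsilon=\partial_x^2 E(q_\varepsilon)-D^2E(q_\varepsilon)(\partial_x q_\varepsilon)^2$, together with $\partial_x u_\varepsilon=q_\varepsilon$ to write $u_\varepsilon\partial_x E(q_\varepsilon)=\partial_x\big(u_\varepsilon E(q_\varepsilon)\big)-q_\varepsilon E(q_\varepsilon)$. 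This yields
\[
\partial_t E(q_\varepsilon)+\partial_x\big(u_\varepsilon E(q_\varepsilon)\big)-\varepsilon\partial_x^2 E(q_\varepsilon)+\varepsilon D^2E(q_\varepsilon)(\partial_x q_\varepsilon)^2
=q_\varepsilon E(q_\varepsilon)-\tfrac12 q_\varepsilon^2 DE(q_\varepsilon)+DE(q_\varepsilon)\,R_\varepsilon,
\]
where $R_\varepsilon=2(u_\varepsilon-\mu_0)(\mu_0+\kappa)-\tfrac12\mu(q_\varepsilon^2)$. Since $E$ is convex, $D^2E\geq 0$, so the viscous dissipation term $\varepsilon D^2E(q_\varepsilon)(\partial_x q_\varepsilon)^2$ is nonnegative and may be dropped to produce a pointwise differential inequality with left-hand side $\partial_t E(q_\varepsilon)+\partial_x(u_\varepsilon E(q_\varepsilon))-\varepsilon\partial_x^2 E(q_\varepsilon)$.

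Next I would test this inequality against an arbitrary nonnegative $\varphi\in C_c^\infty(\mathbb{R}^+\times\mathbb{S})$, integrate by parts to move the derivatives onto $\varphi$, and pass to the limit $\varepsilon\to 0$ term by term. The growth hypotheses $E(\lambda)=O(|\lambda|)$ and $DE(\lambda)=O(1)$ (with $E\in W^{2,\infty}$) place every nonlinearity in the admissible range of the Young measure convergence \eqref{young-1-1}: the linearly growing $E(q_\varepsilon)\to\overline{E(q)}$ and the bounded $DE(q_\varepsilon)\to\overline{DE(q)}$ converge with $r\in(1,2)$ and any $r>0$ respectively, while the quadratically growing terms $q_\varepsilon E(q_\varepsilon)$ and $q_\varepsilon^2 DE(q_\varepsilon)$, being of order $O(|\lambda|^2)=o(|\lambda|^\ell)$ for some $\ell\in(2,3)$, converge to $\overline{qE(q)}$ and $\overline{q^2 DE(q)}$. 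The viscous term $\varepsilon\int E(q_\varepsilon)\,\partial_x^2\varphi$ vanishes in the limit because $E(q_\varepsilon)$ is uniformly bounded in $L^1$ (as $q_\varepsilon$ is uniformly bounded in $L^2$ by \eqref{identity-2}) while $\varepsilon\to 0$.

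For the products mixing strong and weak convergence, I would invoke the uniform convergence $u_\varepsilon\to u$ on compact subsets from \eqref{conti-u} to handle $u_\varepsilon E(q_\varepsilon)\to u\,\overline{E(q)}$ and $DE(q_\varepsilon)\cdot 2(u_\varepsilon-\mu_0)(\mu_0+\kappa)\to\overline{DE(q)}\cdot 2(u-\mu_0)(\mu_0+\kappa)$, together with the strong convergence $\mu(q_\varepsilon^2)\to\Pi_1$ in $L^p_{loc}$ from \eqref{strong-p}, which gives $DE(q_\varepsilon)\,\mu(q_\varepsilon^2)\to\overline{DE(q)}\,\Pi_1$ (here $DE(q_\varepsilon)$ converges weakly-$\ast$ in $L^\infty$). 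Collecting these limits and recalling that $\varphi\geq 0$ was arbitrary yields \eqref{q-equation-1} in the sense of distributions on $\mathbb{R}^+\times\mathbb{S}$.

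The main obstacle I anticipate is the rigorous passage to the limit in the two quadratically growing terms $q_\varepsilon E(q_\varepsilon)$ and $q_\varepsilon^2 DE(q_\varepsilon)$, whose growth sits right at the borderline $O(|\lambda|^2)$. To guarantee that no mass escapes to infinity and that the limits are genuinely represented by the averages against $\mu_{t,x}(\lambda)$, one must combine the uniform space–time higher integrability bound $\|q_\varepsilon\|_{L^{2+\alpha}}$ of Proposition \ref{prop-high-inte} (see \eqref{high-integrability}) with the precise admissibility conditions $\ell<3$ of the Young measure theory; verifying this equi-integrability is the delicate point, whereas the remaining terms and the discarding of the dissipation are routine.
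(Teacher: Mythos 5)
Your proposal is correct and follows essentially the same route as the paper: multiply the viscous equation \eqref{one-derivative-1} by $DE(q_\varepsilon)$, rewrite the transport term as $\partial_x(u_\varepsilon E(q_\varepsilon))-q_\varepsilon E(q_\varepsilon)$, drop the nonnegative dissipation $\varepsilon D^2E(q_\varepsilon)(\partial_x q_\varepsilon)^2$ by convexity, and pass to the limit via the Young measure together with Proposition \ref{prop-compactness-1} and \eqref{u-q-re-1}, with the higher integrability of Proposition \ref{prop-high-inte} justifying the quadratically growing terms. The only (immaterial) difference is that you dispose of the viscous term by putting both derivatives on the test function and using the uniform $L^1$ bound on $E(q_\varepsilon)$, whereas the paper keeps it as $\varepsilon\partial_x(DE(q_\varepsilon)\partial_x q_\varepsilon)$ and uses the uniform $L^2$ bound on $\sqrt{\varepsilon}\,\partial_x q_\varepsilon$ from \eqref{identity-2}.
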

\begin{proof}
Multiplying the first equation in \eqref{one-derivative-1} by
$DE(q_{\varepsilon})$, we get
\begin{equation}\label{q-equation-2a}
\begin{split}
\partial_t E(q_{\varepsilon})+u_{\varepsilon}\partial_x
E(q_{\varepsilon}) &-\varepsilon
DE(q_{\varepsilon})\partial_x^2q_{\varepsilon}
+\frac{1}{2}DE(q_{\varepsilon})(q_{\varepsilon})^2\\
&
=DE(q_{\varepsilon})\left(2(u_{\varepsilon}-\mu_0)(\mu_0+\kappa)-\frac{1}{2}\mu(q_{\varepsilon}^2)\right),
\end{split}
\end{equation}
which implies
\begin{equation*}
\begin{split}
\partial_t E(q_{\varepsilon})&+\partial_x
(u_{\varepsilon}E(q_{\varepsilon}))=q_{\varepsilon}E(q_{\varepsilon})+\varepsilon
\partial_x(DE(q_{\varepsilon})
\partial_xq_{\varepsilon})-\varepsilon D^2E(q_{\varepsilon})
(\partial_xq_{\varepsilon})^2\\
&-\frac{1}{2}DE(q_{\varepsilon})(q_{\varepsilon})^2
+DE(q_{\varepsilon})\left(2(u_{\varepsilon}-\mu_0)(\mu_0+\kappa)-\frac{1}{2}\mu(q_{\varepsilon}^2)\right).
\end{split}
\end{equation*}
Noting that $\sqrt{\varepsilon}\partial_xq_{\varepsilon}$ is
uniformly bounded in $L^2(\mathbb{R}^{+} \times \mathbb{S})$
(according to \eqref{identity-2}), and taking the limit $\varepsilon
\rightarrow 0$, one obtains from Proposition
\ref{prop-compactness-1} and \eqref{u-q-re-1} that
\eqref{q-equation-1} holds.
\end{proof}
Taking $E(\lambda)=\lambda$ in \eqref{q-equation-2a} gives
\begin{equation*}
\begin{split}
\partial_t q_{\varepsilon}+\partial_x
(u_{\varepsilon}q_{\varepsilon})=\varepsilon
\partial_x^2q_{\varepsilon}+\frac{1}{2}(q_{\varepsilon})^2
+\left(2(u_{\varepsilon}-\mu_0)(\mu_0+\kappa)-\frac{1}{2}\mu(q_{\varepsilon}^2)\right).
\end{split}
\end{equation*}
Similar to the proof of Lemma \ref{lem-renormalize-1}, we may get
\begin{lem}\label{lem-q-2} There holds
\begin{equation}\label{q-equation-1-1-2}
\partial_t \overline{q}+u\partial_x \overline{q}=(\frac{1}{2}\overline{q^2}-\overline{q}^2)
+\left(2(u-\mu_0)(\mu_0+\kappa)-\frac{1}{2}\Pi_1\right)
\end{equation}
in the sense of distributions on $\mathbb{R}^{+} \times \mathbb{S}$.
\end{lem}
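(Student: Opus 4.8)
The plan is to pass to the limit $\varepsilon \to 0$, along the subsequence $\varepsilon_j$ furnished by Proposition \ref{prop-compactness-1}, in the linear renormalized identity displayed immediately above the statement, namely
\begin{equation*}
\partial_t q_{\varepsilon}+\partial_x(u_{\varepsilon}q_{\varepsilon})=\varepsilon\partial_x^2q_{\varepsilon}+\frac{1}{2}(q_{\varepsilon})^2+\left(2(u_{\varepsilon}-\mu_0)(\mu_0+\kappa)-\frac{1}{2}\mu(q_{\varepsilon}^2)\right),
\end{equation*}
interpreting every term in the sense of distributions on $\mathbb{R}^{+}\times\mathbb{S}$. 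The decisive structural remark is that this identity is the case $E(\lambda)=\lambda$ of \eqref{q-equation-2a}, for which $D^2E\equiv 0$; hence the viscous defect term $-\varepsilon D^2E(q_{\varepsilon})(\partial_x q_{\varepsilon})^2$ that forced the inequality in Lemma \ref{lem-renormalize-1} is absent here, and the limit is a genuine \emph{equality} rather than an inequality.

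First I would dispose of the terms that vanish or converge strongly. Testing $\varepsilon\partial_x^2 q_{\varepsilon}$ against $\varphi$ and integrating by parts gives $-\int\sqrt{\varepsilon}\,(\sqrt{\varepsilon}\,\partial_x q_{\varepsilon})\,\partial_x\varphi$, which tends to $0$ since $\sqrt{\varepsilon}\,\partial_x q_{\varepsilon}$ is bounded in $L^2(\mathbb{R}^{+}\times\mathbb{S})$ by \eqref{identity-2}. The lower-order term converges as $2(u_{\varepsilon}-\mu_0)(\mu_0+\kappa)\to 2(u-\mu_0)(\mu_0+\kappa)$ by the uniform convergence \eqref{conti-u}, while $\frac{1}{2}\mu(q_{\varepsilon}^2)\to\frac{1}{2}\Pi_1$ is precisely \eqref{strong-p}. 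For the flux term I would combine \eqref{conti-u} with the weak convergence $q_{\varepsilon}\rightharpoonup\overline{q}=\partial_x u$ supplied by \eqref{u-q-re-1} (obtained from \eqref{young-1-1} with $f(\lambda)=\lambda$) to get $u_{\varepsilon}q_{\varepsilon}\rightharpoonup u\,\overline{q}$, hence $\partial_x(u_{\varepsilon}q_{\varepsilon})\to\partial_x(u\,\overline{q})$ in $\mathcal{D}'$; likewise $\partial_t q_{\varepsilon}\to\partial_t\overline{q}$.

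The main obstacle is the nonlinear term $\frac{1}{2}q_{\varepsilon}^2$, where I must justify $q_{\varepsilon}^2\rightharpoonup\overline{q^2}$ without invoking the (generally false) identity $\overline{q^2}=\overline{q}^2$. The difficulty is that $f(\lambda)=\lambda^2$ violates the growth restriction $r<2$ attached to \eqref{young-1-1}, so that statement alone does not apply. This is exactly where the space-time higher-integrability estimate \eqref{high-integrability} of Proposition \ref{prop-high-inte} is needed: it bounds $q_{\varepsilon}$ uniformly in $L^{2+\alpha}_{loc}$, so the space-time version of the Young-measure convergence, valid for $g=o(|\lambda|^{\ell})$ with $\ell<3$, applies to $g(\lambda)=\lambda^2$ with any $\ell\in(2,3)$ and yields $\int_0^T\!\int_{\mathbb{S}}q_{\varepsilon}^2\varphi\to\int_0^T\!\int_{\mathbb{S}}\overline{q^2}\varphi$ for every smooth $\varphi$. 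Collecting the limits produces the conservative-form identity
\begin{equation*}
\partial_t\overline{q}+\partial_x(u\,\overline{q})=\frac{1}{2}\overline{q^2}+2(u-\mu_0)(\mu_0+\kappa)-\frac{1}{2}\Pi_1 .
\end{equation*}
Finally I would rewrite $\partial_x(u\,\overline{q})=u\,\partial_x\overline{q}+\overline{q}\,\partial_x u=u\,\partial_x\overline{q}+\overline{q}^2$, using $\partial_x u=\overline{q}$, and transfer the $\overline{q}^2$ term to the right-hand side to arrive at \eqref{q-equation-1-1-2}.
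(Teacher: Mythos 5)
Your proposal is correct and follows essentially the same route as the paper: the paper likewise specializes \eqref{q-equation-2a} to $E(\lambda)=\lambda$ and then passes to the limit ``similar to the proof of Lemma \ref{lem-renormalize-1}'', which is exactly the limit argument you spell out. The two points you make explicit --- that $D^2E\equiv 0$ kills the viscous defect term so the limit is an equality rather than an inequality, and that the $q_{\varepsilon}^2$ term requires the space-time Young-measure convergence for growth $o(|\lambda|^{\ell})$, $\ell<3$ (hence Proposition \ref{prop-high-inte}) rather than \eqref{young-1-1} --- are precisely the details the paper leaves implicit.
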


\begin{lem}\label{lem-renormalize-2} Let $E=E(\lambda)\in W^{2, \infty}(\mathbb{R})$
 be a given convex function satisfying $E(\lambda)=O(|\lambda|)$ and $DE(\lambda)=O(1)$
  for $|\lambda|\rightarrow +\infty$. Then there holds
\begin{equation}\label{q-equation-2}
\begin{split}
&\partial_t (\overline{E(q)}-E(\overline{q}))+\partial_x (u
(\overline{E(q)}-E(\overline{q})))\\
 &\leq
\int_{\mathbb{R}}\left(\lambda E(\lambda)-\frac{1}{2}DE(\lambda)
\lambda^2\right)\,d\mu_{t, x}(\lambda)+\frac{1}{2}DE(\overline{q})
(\overline{q})^2-\overline{q}E(\overline{q})\\
&\quad -\frac{1}{2}DE(\overline{q})(\overline{q^2}-(\overline{q})^2)
+(\overline{DE(q)}-DE(\overline{q}))\left(2(u-\mu_0)(\mu_0+\kappa)-\frac{1}{2}\Pi_1\right)
\end{split}
\end{equation}
in the sense of distributions on $\mathbb{R}^{+} \times \mathbb{S}$.
\end{lem}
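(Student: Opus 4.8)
The plan is to obtain the inequality \eqref{q-equation-2} by writing down a renormalized transport identity for $E(\overline{q})$ and subtracting it from the entropy inequality \eqref{q-equation-1} of Lemma \ref{lem-renormalize-1}. Recall from \eqref{u-q-re-1} that $\overline{q}=\partial_x u$, and that by Lemma \ref{lem-q-2} the weak limit $\overline{q}$ solves the transport equation \eqref{q-equation-1-1-2}. The first step is therefore to multiply \eqref{q-equation-1-1-2} by $DE(\overline{q})$ and use the chain rule together with $\partial_x u=\overline{q}$ (adding $(\partial_x u)E(\overline{q})=\overline{q}\,E(\overline{q})$ to pass to conservative form) to derive the identity
\[
\partial_t E(\overline{q})+\partial_x\big(u\,E(\overline{q})\big)=\overline{q}\,E(\overline{q})+DE(\overline{q})\Big[\tfrac12\overline{q^2}-(\overline{q})^2+2(u-\mu_0)(\mu_0+\kappa)-\tfrac12\Pi_1\Big].
\]

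Once this identity is in hand, I would subtract it from \eqref{q-equation-1}. Because \eqref{q-equation-1} is an inequality of type $\leq$ while the displayed line is an exact equality, the difference $\overline{E(q)}-E(\overline{q})$ inherits an inequality of the same type, whose right-hand side is the difference of the two right-hand sides. It then remains to simplify this difference, which is routine bookkeeping. Using the Young--measure identity $\overline{qE(q)}-\tfrac12\overline{q^2\,DE(q)}=\int_{\mathbb{R}}\big(\lambda E(\lambda)-\tfrac12 DE(\lambda)\lambda^2\big)\,d\mu_{t,x}(\lambda)$, the terms carrying the factor $2(u-\mu_0)(\mu_0+\kappa)-\tfrac12\Pi_1$ collapse into $\big(\overline{DE(q)}-DE(\overline{q})\big)\big[2(u-\mu_0)(\mu_0+\kappa)-\tfrac12\Pi_1\big]$, while the remaining $DE(\overline{q})$ terms rearrange as $\tfrac12 DE(\overline{q})(\overline{q})^2-\tfrac12 DE(\overline{q})\big(\overline{q^2}-(\overline{q})^2\big)$. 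A direct comparison then shows this difference is precisely the right-hand side of \eqref{q-equation-2}.

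The delicate point, and the one I expect to be the main obstacle, is justifying the renormalization that produces the displayed identity for $E(\overline{q})$ \emph{as an exact equality}. This is essential: the subtraction yields a usable inequality only because \eqref{q-equation-1} is an upper bound while the identity for $E(\overline{q})$ is an equality, since subtracting two upper bounds would be meaningless. As $\overline{q}$ is merely a distributional solution of \eqref{q-equation-1-1-2}, the chain rule is not automatic, and I would establish the exact identity via the DiPerna--Lions commutator argument: mollify $\overline{q}$ in $x$, apply the chain rule to the regularized equation, and let the mollification parameter tend to zero, showing that the commutator $u\,\partial_x(\overline{q})_\delta-\big(u\,\partial_x\overline{q}\big)_\delta$ vanishes in $L^1_{loc}$. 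This is where the earlier analysis pays off: $u\in L^\infty(\mathbb{R}^+;H^1(\mathbb{S}))$ gives $\partial_x u=\overline{q}\in L^2_{loc}$, and the higher integrability of Proposition \ref{prop-high-inte} upgrades this to $\overline{q}\in L^p_{loc}$ for some $2\leq p<3$, which both controls the quadratic source $\tfrac12\overline{q^2}-(\overline{q})^2$ and supplies the duality that forces the commutator to converge to zero, yielding the exact renormalized identity. With this in place, the subtraction and the algebraic simplification above complete the proof.
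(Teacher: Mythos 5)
Your proposal matches the paper's proof essentially step for step: the paper likewise renormalizes the transport equation \eqref{q-equation-1-1-2} for $\overline{q}$ by convolving with a Friedrichs mollifier $j_{\delta}$, multiplying by $DE(\overline{q}^{\delta})$, and invoking Lemma II.1 of DiPerna--Lions to show the commutator $r_{\delta}$ vanishes in $L^1_{loc}$, thereby obtaining the exact identity \eqref{q-appr-2} for $E(\overline{q})$, which is then subtracted from \eqref{q-equation-1}. Your observation that the subtraction is legitimate only because the renormalized equation for $E(\overline{q})$ is an exact equality (while \eqref{q-equation-1} is an upper bound) is precisely the structural point the paper's argument relies on.
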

\begin{proof} We first get from \eqref{q-equation-1-1-2} that
\begin{equation}\label{q-appr-1}
\partial_t {\overline{q}}+u\partial_x {\overline{q}}=
(\frac{1}{2}\overline{q^2}-\overline{q}^2)
+\left(2(u-\mu_0)(\mu_0+\kappa)-\frac{1}{2}\Pi_1\right)
\end{equation}
Taking the convolution of \eqref{q-appr-1} with the standard
Friedrichs mollifier, $j_{\delta}(x)$, one gets that
\begin{equation}\label{q-appr-1a}
\partial_t {\overline{q}}^{\delta}+u\partial_x {\overline{q}}^{\delta}=
j_{\delta} \ast \left((\frac{1}{2}\overline{q^2}-\overline{q}^2)
+\left(2(u-\mu_0)(\mu_0+\kappa)-\frac{1}{2}\Pi_1\right)\right)+r_{\delta},
\end{equation}
where ${\overline{q}}^{\delta}=j_{\delta}\ast \overline{q}$,\,
$r_{\delta}=u\partial_x {\overline{q}}^{\delta}-j_{\delta}
\ast(u\partial_x \overline{q})$. Multiplying \eqref{q-appr-1a} by
$DE({\overline{q}}^{\delta})$ gives rise to
\begin{equation}\label{q-appr-1b}
\begin{split}
&\partial_t E({\overline{q}}^{\delta})+\partial_x
(uE({\overline{q}}^{\delta}))=\overline{q}E({\overline{q}}^{\delta})\\
&\qquad +DE({\overline{q}}^{\delta})\left\{j_{\delta} \ast
\left((\frac{1}{2}\overline{q^2}-\overline{q}^2)
+\left(2(u-\mu_0)(\mu_0+\kappa)-\frac{1}{2}\Pi_1\right)\right)+r_{\delta}\right\}.
\end{split}
\end{equation}
Taking the limit $\delta \rightarrow 0^{+}$ in \eqref{q-appr-1b} and
using the fact that,
\begin{equation*}r_{\delta}\rightarrow 0 \quad \mbox{as}\quad \delta \rightarrow 0^{+}
 \quad \mbox{in}\quad L^1_{loc}(\mathbb{R}^{+}, L^{1}(\mathbb{S})),\end{equation*}
which follows from lemma II.1 of \cite{Di-Li}, one obtains that
\begin{equation}\label{q-appr-2}
\begin{split}
\partial_t E(\overline{q})&+\partial_x (u
E(\overline{q})\\
&=\overline{q}E(\overline{q})+DE(\overline{q})\left(\frac{1}{2}\overline{q^2}
-(\overline{q})^2
-\left(2(u-\mu_0)(\mu_0+\kappa)-\frac{1}{2}\Pi_1\right)\right).
\end{split}
\end{equation}
Subtracting \eqref{q-appr-2} from \eqref{q-equation-1} yields
\eqref{q-equation-2}.
\end{proof}

\begin{lem}
For each $R>0$,
\begin{equation}\label{initial-q-2}
\lim_{t\rightarrow 0+}\int_{\mathbb{S}}(\overline{Q_{R}^{\pm}(q)}(t,
x)-Q_{R}^{\pm}(\overline{q})(t, x))\, dx=0,
\end{equation}
where
\begin{equation*}\label{initial-q-2-2a}
Q_{R}(\lambda)=\begin{cases} \frac{1}{2}\lambda^2 \quad & if \,
|\lambda| \leq R,\\
R|\lambda|-\frac{1}{2}R^2 & if \,|\lambda| \geq R,
\end{cases}
\end{equation*}
and $Q_{R}^{+}(\lambda):={\bf{1}}_{\lambda \geq 0}Q_{R}(\lambda)$,
$Q_{R}^{-}(\lambda):={\bf{1}}_{\lambda \leq 0}Q_{R}(\lambda)$ for
$\lambda \in \mathbb{R}$, where ${\bf{1}}_{A}$ denotes the
characteristic function of the set $A$.
\end{lem}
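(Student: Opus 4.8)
The plan is to combine the convexity of $Q_R^{\pm}$ with the strong convergence of the regularized initial data; the renormalized inequality \eqref{q-equation-2} will only be needed if one wants a quantitative rate. Set $N^{\pm}(t):=\int_{\mathbb{S}}\bigl(\overline{Q_R^{\pm}(q)}(t,x)-Q_R^{\pm}(\overline{q}(t,x))\bigr)\,dx$. Both $Q_R^{+}$ and $Q_R^{-}$ are continuous, nonnegative and convex, lie in $W^{2,\infty}(\mathbb{R})$, and satisfy $Q_R^{\pm}(\lambda)=O(|\lambda|)$, $DQ_R^{\pm}=O(1)$; in particular they meet the hypotheses of Lemma \ref{lem-renormalize-2}. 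Since $\mu_{t,x}$ is a probability measure and $\overline{q}(t,x)=\int_{\mathbb R}\lambda\,d\mu_{t,x}(\lambda)$, Jensen's inequality gives $\overline{Q_R^{\pm}(q)}(t,x)\ge Q_R^{\pm}\bigl(\overline{q}(t,x)\bigr)$ pointwise, so $N^{\pm}(t)\ge0$ for every $t>0$. It therefore suffices to show $\limsup_{t\to0^{+}}N^{\pm}(t)\le0$, which I would do by controlling the two integrals in $N^{\pm}$ separately at $t\to0^{+}$.

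First I would identify the initial trace of $\overline{Q_R^{\pm}(q)}$. Because $u_{0\varepsilon}=\phi_{\varepsilon}\ast u_0\to u_0$ in $H^1(\mathbb{S})$, the initial slopes $q_{\varepsilon}(0,\cdot)=\partial_x u_{0\varepsilon}=\phi_{\varepsilon}\ast\partial_x u_0$ converge \emph{strongly} in $L^2(\mathbb{S})$ to $\partial_x u_0$, so $Q_R^{\pm}(q_{\varepsilon}(0,\cdot))\to Q_R^{\pm}(\partial_x u_0)$ in $L^1(\mathbb{S})$ (continuity of $Q_R^{\pm}$ plus equi-integrability from the linear growth and the $L^2$ bound). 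Thus there is no Young-measure defect at $t=0$. Choosing $r\in(1,2)$, so that $Q_R^{\pm}(\lambda)=o(|\lambda|^{r})$, the time-continuity $\overline{Q_R^{\pm}(q)}\in C\bigl([0,\infty);L^{r'/r}(\mathbb{S})\bigr)$ recorded before the lemma, together with \eqref{young-1-1}, identifies the trace at $t=0$ as $\overline{Q_R^{\pm}(q)}(0,\cdot)=Q_R^{\pm}(\partial_x u_0)$; since $r'/r>1$ gives $L^{r'/r}(\mathbb{S})\hookrightarrow L^1(\mathbb{S})$, this yields $\int_{\mathbb{S}}\overline{Q_R^{\pm}(q)}(t,x)\,dx\to\int_{\mathbb{S}}Q_R^{\pm}(\partial_x u_0)\,dx$ as $t\to0^{+}$.

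Next I would treat $\int_{\mathbb{S}}Q_R^{\pm}(\overline{q}(t,x))\,dx$ by weak lower semicontinuity. By \eqref{conti-u} we have $u(0,\cdot)=\lim_j u_{0\varepsilon_j}=u_0$, and by the uniform energy bound \eqref{identity-2} the family $\overline{q}(t,\cdot)=\partial_x u(t,\cdot)$ is bounded in $L^2(\mathbb{S})$ while $u(t,\cdot)\to u_0$ in $L^2$; hence $\partial_x u(t,\cdot)\rightharpoonup\partial_x u_0$ weakly in $L^2(\mathbb{S})$ as $t\to0^{+}$. As $v\mapsto\int_{\mathbb{S}}Q_R^{\pm}(v)\,dx$ is convex, nonnegative and strongly lower semicontinuous, it is weakly lower semicontinuous, so $\liminf_{t\to0^{+}}\int_{\mathbb{S}}Q_R^{\pm}(\partial_x u(t,x))\,dx\ge\int_{\mathbb{S}}Q_R^{\pm}(\partial_x u_0)\,dx$. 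Combining this with the previous paragraph gives $\limsup_{t\to0^{+}}N^{\pm}(t)\le\int_{\mathbb{S}}Q_R^{\pm}(\partial_x u_0)\,dx-\int_{\mathbb{S}}Q_R^{\pm}(\partial_x u_0)\,dx=0$, and with $N^{\pm}\ge0$ this proves \eqref{initial-q-2} for both signs simultaneously.

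The main obstacle is the trace identification of the second paragraph: one must be sure that the slice $\mu_{0,x}$ of the space-time Young measure genuinely coincides with the Dirac measure generated by the initial slopes $q_{\varepsilon}(0,\cdot)$, and this is exactly where the strong convergence of the mollified data $\phi_{\varepsilon}\ast\partial_x u_0$ and the continuity class $C([0,\infty);L^{r'/r})$ of $\overline{f(q)}$ are both essential; everything else is soft (Jensen and weak lower semicontinuity). If one wishes to avoid invoking the trace and instead obtain the explicit rate $N^{\pm}(t)\le M_R\,t$, I would integrate \eqref{q-equation-2} with $E=Q_R^{\pm}$ over $\mathbb{S}$: the flux term drops by periodicity, and using $|\lambda Q_R^{\pm}(\lambda)-\tfrac12\lambda^2 DQ_R^{\pm}(\lambda)|\le\tfrac12 R\lambda^2$, the finiteness of the second moment $\overline{q^2}\in L^{\infty}(\mathbb{R}^{+};L^1(\mathbb{S}))$ from \eqref{u-q-re-1}, the bound $|DQ_R^{\pm}|\le R$, and $\|2(u-\mu_0)(\mu_0+\kappa)-\tfrac12\Pi_1\|_{L^{\infty}}\le\tfrac12 L_0^{2}$ from \eqref{integral-1-3}, one bounds $\tfrac{d}{dt}N^{\pm}(t)$ by a constant $M_R$; this refines the conclusion once $N^{\pm}(0^{+})=0$ has been established as above.
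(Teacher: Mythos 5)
Your first and third paragraphs are sound, and they coincide with half of the paper's own argument: Jensen's inequality gives $N^{\pm}(t)\ge 0$, and the paper too proves $\overline{q}(t,\cdot)=\partial_x u(t,\cdot)\rightharpoonup \partial_x u_0$ weakly in $L^2(\mathbb{S})$ as $t\to 0^+$ (via \eqref{conti-u} and the uniform $L^2$ bound) and then uses weak lower semicontinuity. The gap is your second paragraph, the trace identification $\lim_{t\to 0^+}\int_{\mathbb{S}}\overline{Q_R^{\pm}(q)}(t,x)\,dx=\int_{\mathbb{S}}Q_R^{\pm}(\partial_x u_0)\,dx$. This does not follow from the strong convergence $\phi_{\varepsilon}\ast\partial_x u_0\to\partial_x u_0$ in $L^2$ together with the membership $\overline{f(q)}\in C([0,\infty);L^{r'/r}(\mathbb{S}))$ and \eqref{young-1-1}. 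The continuity class tells you that $\overline{Q_R^{\pm}(q)}(t,\cdot)$ has a limit as $t\to 0^+$, but not what that limit is; to identify it with $Q_R^{\pm}(\partial_x u_0)$ you must exchange $\lim_{t\to 0^+}$ with $\lim_{\varepsilon\to 0}$ in $\int_{\mathbb{S}}Q_R^{\pm}(q_{\varepsilon}(t,x))\,dx$, i.e.\ you need \eqref{young-1-1} to hold uniformly on compact time sets \emph{containing} $t=0$. That uniformity is exactly what is in doubt: unlike $u_{\varepsilon}$ (which is compact in $C([0,T];L^2)$ by Lemma \ref{lemma-compactness-1}, since $\partial_t u_{\varepsilon}$ is bounded in $L^2_{t,x}$), the time derivative of $\int Q_R^{\pm}(q_{\varepsilon})\,dx$ contains the viscous term $\varepsilon\int D^2Q_R^{\pm}(q_{\varepsilon})(\partial_x q_{\varepsilon})^2\,dx$, which is only bounded in $L^1_t$ and could a priori concentrate at $t=0^+$, producing an instantaneous defect. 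Ruling out such a defect at initial time is precisely the content of the lemma, so your key step assumes what is to be proved. Your fallback does not repair this: integrating the \emph{limit} inequality \eqref{q-equation-2} only bounds $\frac{d}{dt}N^{\pm}$ and, as you yourself note, still requires $N^{\pm}(0^+)=0$ as input.

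The paper closes this loop by a different mechanism that never needs the trace of $\overline{Q_R^{\pm}(q)}$: Jensen's inequality applied twice gives the pointwise sandwich $0\le\overline{Q_R^{\pm}(q)}-Q_R^{\pm}(\overline{q})\le\tfrac12\left(\overline{q^2}-(\overline{q})^2\right)$, and the quadratic defect is squeezed to zero as $t\to0^+$ by combining your weak-lsc lower bound $\liminf_{t\to0^+}\int(\overline{q})^2\,dx\ge\int q_0^2\,dx$ with the upper bound $\int\overline{q^2}(t,\cdot)\,dx\le\int q_0^2\,dx$, which is \emph{inherited from the $\varepsilon$-level energy identity} \eqref{identity-2}. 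Note the asymmetry: at the quadratic level there is a transfer mechanism (the energy identity) that carries an upper bound over to the limit object, whereas for $\overline{Q_R^{\pm}(q)}$ you postulated such a transfer without one. If you want to keep your two-term decomposition, the correct repair is to work at the $\varepsilon$-level: integrate \eqref{q-equation-2a} with $E=Q_R^{\pm}$ over $[0,t]\times\mathbb{S}$; since $E$ is convex the viscous contribution $-\varepsilon\int E''(q_{\varepsilon})(\partial_x q_{\varepsilon})^2\,dx$ has a favorable sign, the remaining terms are bounded by a constant $M_R$ (using $|DE|\le R$, \eqref{identity-2} and \eqref{integral-1-3}), and the initial term is $\int Q_R^{\pm}(q_{0\varepsilon})\,dx\to\int Q_R^{\pm}(\partial_x u_0)\,dx$ by strong convergence of the mollified data. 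Passing $\varepsilon\to0$ via \eqref{young-1-1} then yields the one-sided bound $\int\overline{Q_R^{\pm}(q)}(t)\,dx\le\int Q_R^{\pm}(\partial_x u_0)\,dx+M_R\,t$, which is all your scheme actually needs; but this is not what your proposal does.
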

\begin{proof}
Thanks to the definition of $Q_{R}(\lambda)$ and
$Q_{R}^{\pm}(\lambda)$, one may verify that $Q_{R}^{\pm}(\lambda)$
and $Q_{R}(\lambda)=Q_{R}^{+}(\lambda)+Q_{R}^{-}(\lambda)$ all
satisfy the assumptions on $E$ in Lemma \ref{lem-renormalize-2}, so
one can apply \eqref{q-equation-2} to all of them.

Note that $Q_{R}(\lambda)$ is a convex function, we get from
Jensen's inequality that
\begin{equation*}
\begin{split}
0 &\leq \overline{Q_{R}(q)}(t,
x)-Q_{R}(\overline{q})\\
&=\frac{1}{2}\left(\overline{q^2}-(\overline{q})^2\right)-\frac{1}{2}
\bigg(\int_{\mathbb{R}}(|\lambda|-R)^2{\bf 1}_{|\lambda|\geq
R}\,d\mu_{t, x}(\lambda)-(|\overline{q}|-R)^2{\bf
1}_{|\overline{q}|\geq R}\bigg).
\end{split}
\end{equation*}
While $(|\lambda|-R)^2{\bf 1}_{|\lambda|\geq R}$ is a convex
function, one gets that
\begin{equation*}
\int_{\mathbb{R}}(|\lambda|-R)^2{\bf 1}_{|\lambda|\geq R}\,d\mu_{t,
x}(\lambda)-(|\overline{q}|-R)^2{\bf 1}_{|\overline{q}|\geq R} \geq
0.
\end{equation*}
Hence,
\begin{equation}\label{inequality-Q-q-1}
0 \leq \overline{Q_{R}^{\pm}(q)}-Q_{R}^{\pm}(\overline{q}) \leq
\overline{Q_{R}(q)}-Q_{R}(\overline{q}) \leq
\frac{1}{2}(\overline{q^2}-(\overline{q})^2).
\end{equation}
On the other hand, thanks to the fact that $u \in C(\mathbb{R}^{+}
\times \mathbb{S})$ and \eqref{u-q-re-1}, we get for each test
function $\phi \in C^{\infty}(\mathbb{S})$
\begin{equation*}
\begin{split}
 \lim_{t \rightarrow 0^+} \int_{\mathbb{S}}q(t, x) \phi(x)\, dx
&=-\lim_{t \rightarrow 0^+} \int_{\mathbb{S}}u(t, x)
\partial_x \phi(x)\, dx\\
&=- \int_{\mathbb{S}}u_0(x)
\partial_x \phi(x)\,
dx =\int_{\mathbb{S}}q_0( x) \phi(x)\, dx.
\end{split}
\end{equation*}
From this, together with the fact that $q_{\varepsilon} (\in
C(\mathbb{R}^+; L^{2}(\mathbb{S}))\cap L^{\infty}(\mathbb{R}^+;
L^{2}(\mathbb{S})))$ is uniformly bounded with respect to
$\varepsilon>0$, we obtain that
\begin{equation*}
\overline{q}(t, x) \rightharpoonup q_0(x) =\partial_x u_0 \quad
\mbox{as} \quad t \rightarrow 0^+\quad \mbox{in} \quad
L^{2}(\mathbb{S}),
\end{equation*}
and so
\begin{equation*}
\lim_{t \rightarrow 0^+} \int_{\mathbb{S}}(\overline{q}(t, x))^2\,
dx \geq \int_{\mathbb{S}}({q}_0(x))^2\, dx.
\end{equation*}
While the energy estimate \eqref{identity-2} together with
\eqref{young-1-1} implies that
\begin{equation*}
\lim_{t \rightarrow 0^+} \int_{\mathbb{S}}(\overline{q}(t, x))^2\,
dx \leq \lim_{t \rightarrow 0^+} \int_{\mathbb{S}}\overline{q(t,
x)^2}\, dx \leq \int_{\mathbb{S}}({q}_0(x))^2\, dx.
\end{equation*}
Hence, we have
\begin{equation*}
\lim_{t \rightarrow 0^+} \int_{\mathbb{S}}(\overline{q}(t, x))^2\,
dx =\lim_{t \rightarrow 0^+} \int_{\mathbb{S}}\overline{q(t, x)^2}\,
dx = \int_{\mathbb{S}}({q}_0(x))^2\, dx,
\end{equation*}
which along with \eqref{inequality-Q-q-1} implies
\eqref{initial-q-2}.
\end{proof}
We are in a position to prove that the Young measure $\mu_{t,
x}(\lambda)$ is a Dirac measure.
\begin{lem}\label{lemma-Young-measure-1}
Let $\mu_{t, x}(\lambda)$ be the Young measure associated with
$\{q_{\varepsilon}\}_{\varepsilon>0}$. Then
\begin{equation}\label{Young-measure-1}\mu_{t, x}(\lambda)
=\delta_{\overline{q}(t, x)}(\lambda) \quad \forall \quad a. e.
 \quad (t, x) \in \mathbb{R}^{+} \times \mathbb{S}.
 \end{equation}
\end{lem}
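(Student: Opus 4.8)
The plan is to reduce the assertion \eqref{Young-measure-1} to the single scalar identity $\overline{q^2}(t,x)=(\overline{q}(t,x))^2$ for a.e.\ $(t,x)$. Since each $\mu_{t,x}$ is a probability measure with first moment $\overline{q}$ and second moment $\overline{q^2}$, the vanishing of its variance $\overline{q^2}-(\overline{q})^2$ forces it to be the Dirac mass at $\overline{q}(t,x)$, which by \eqref{u-q-re-1} equals $\delta_{\partial_x u(t,x)}$. To reach the variance identity I would track, for each fixed $R>0$, the nonnegative defect functionals $H_R^{\pm}(t):=\int_{\mathbb{S}}\bigl(\overline{Q_R^{\pm}(q)}-Q_R^{\pm}(\overline{q})\bigr)\,dx$, which are controlled above by $\tfrac12\int_{\mathbb{S}}(\overline{q^2}-(\overline{q})^2)\,dx$ thanks to \eqref{inequality-Q-q-1}, and observe that $Q_R^{+}+Q_R^{-}=Q_R\to\tfrac12\lambda^2$ as $R\to\infty$, so that $H_R^{+}+H_R^{-}\uparrow\tfrac12\int_{\mathbb{S}}(\overline{q^2}-(\overline{q})^2)\,dx$.

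Next I would apply the renormalized transport inequality \eqref{q-equation-2} of Lemma \ref{lem-renormalize-2} successively to the two convex truncations $E=Q_R^{+}$ and $E=Q_R^{-}$ (both of which satisfy the hypotheses $E(\lambda)=O(|\lambda|)$, $DE(\lambda)=O(1)$). Integrating the resulting distributional inequalities over the circle $\mathbb{S}$ kills the flux term $\partial_x\bigl(u(\overline{E(q)}-E(\overline{q}))\bigr)$ by periodicity, leaving differential inequalities of the form $\tfrac{d}{dt}H_R^{\pm}(t)\le \int_{\mathbb{S}}(\mathrm{A}^{\pm}+\mathrm{B}^{\pm}+\mathrm{C}^{\pm}+\mathrm{D}^{\pm})\,dx$, where $\mathrm{A}^{\pm},\mathrm{B}^{\pm},\mathrm{C}^{\pm}$ are the entropy-production terms built from $\lambda E-\tfrac12\lambda^2 DE$ and the Jensen gaps, and $\mathrm{D}^{\pm}=(\overline{DE(q)}-DE(\overline{q}))\bigl(2(u-\mu_0)(\mu_0+\kappa)-\tfrac12\Pi_1\bigr)$. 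Here the coefficient in $\mathrm{D}^{\pm}$ is \emph{uniformly bounded}, by $\tfrac12 L_0^2$, exactly as in \eqref{integral-1-3}, and the support of $\mu_{t,x}$ is confined to the half-line $\{\lambda\le \tfrac{2}{t}+L_0\}$ by the one-sided estimate \eqref{entrop-con-1} of Proposition \ref{prop-entrop-con}. The plan is to combine the explicit forms of $Q_R^{\pm}$ and $DQ_R^{\pm}$ with these two facts so as to dominate the whole right-hand side by $\Phi(t)\,(H_R^{+}(t)+H_R^{-}(t))$ for some function $\Phi$ that is integrable on each interval $(0,T)$.

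Once such a Gronwall-closable inequality is established, I would invoke the initial-layer identity \eqref{initial-q-2}, namely $\lim_{t\to0^{+}}H_R^{\pm}(t)=0$, and conclude by Gronwall's inequality that $H_R^{\pm}(t)\equiv0$ on $(0,T)$ for every $R$; summing the two and letting $R\to\infty$ via monotone convergence then yields $\int_{\mathbb{S}}(\overline{q^2}-(\overline{q})^2)\,dx=0$, hence $\overline{q^2}=(\overline{q})^2$ a.e., which finishes the proof. The main obstacle is precisely the right-hand side estimate in the previous paragraph: the entropy terms $\mathrm{A}^{\pm}+\mathrm{B}^{\pm}+\mathrm{C}^{\pm}$ must be shown, using the convexity of $Q_R^{\pm}$ and the splitting into positive and negative parts, to contribute with a favorable sign or to be absorbable into the defect, while the naive Cauchy--Schwarz bound on $\mathrm{D}^{\pm}$ only gives $(H_R^{\pm})^{1/2}$ and would not close the argument on its own. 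It is the interplay of the entropy-dissipation structure with the one-sided upper bound on $q$ (which rules out concentration of the positive part of the Young measure) that must furnish the coercivity needed to beat down the $\mathrm{D}^{\pm}$ contribution and produce a genuine linear-in-$H_R^{\pm}$ majorant.
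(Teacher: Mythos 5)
Your outline reproduces the paper's skeleton faithfully --- reduction to the vanishing of the variance $\overline{q^2}-(\overline{q})^2$, the defects built from $Q_R^{\pm}$, Lemma \ref{lem-renormalize-2}, integration over $\mathbb{S}$ to kill the flux, the initial-layer identity \eqref{initial-q-2}, Gronwall, then $R\to\infty$ --- but the step you explicitly defer (``the coercivity needed to beat down the $\mathrm{D}^{\pm}$ contribution'') is the entire content of the proof, and the mechanism that supplies it is not a coercivity estimate; it is an exact algebraic cancellation, applied asymmetrically to the two signs. For $E=Q_R^{+}$ your plan of running Gronwall from $t=0$ at fixed $R$ would in fact fail: for $t<\frac{2}{R-L_0}$ the entropy-production term $\frac{R}{2}\bigl(\int\lambda(\lambda-R){\bf 1}_{\lambda\geq R}\,d\mu_{t,x}-\overline{q}(\overline{q}-R){\bf 1}_{\overline{q}\geq R}\bigr)$ is a Jensen gap of a \emph{convex} function, hence nonnegative, enters with the unfavorable sign and size $O(R)$, and is not controlled by $H_R^{+}$. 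The paper's fix is to use \eqref{entrop-con-1} in a stronger way than you do: since $\mbox{Supp}\,\mu_{t,x}\subset(-\infty,\tfrac{2}{t}+L_0)$, for $t\geq t_0(R):=\tfrac{2}{R-L_0}$ every truncation term vanishes identically, so one integrates in time only from the moving initial time $t_0(R)$, and \eqref{initial-q-2} is invoked through $t_0(R)\to0$ as $R\to\infty$, yielding \eqref{q-r-equation-6} with nothing left to absorb.

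For $E=Q_R^{-}$, where the support bound gives nothing, the decisive observation is the identity
\begin{equation*}
\overline{DQ_{R}^{-}(q)}-DQ_{R}^{-}(\overline{q})+\overline{q_{+}}-\overline{q}_{+}
=-\Bigl(\int_{\mathbb{R}}(\lambda+R)\,{\bf 1}_{\lambda\leq-R}\,d\mu_{t,x}(\lambda)-(\overline{q}+R)\,{\bf 1}_{\overline{q}\leq-R}\Bigr),
\end{equation*}
that is, the combined $\mathrm{D}$-terms are not merely bounded by, but \emph{equal to}, minus the Jensen gap of the concave function $(\lambda+R){\bf 1}_{\lambda\leq-R}$ --- precisely the quantity that the entropy production contributes with favorable sign and coefficient of order $R^{2}$. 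Since the multiplier $2(u-\mu_0)(\mu_0+\kappa)-\tfrac12\Pi_1$ is bounded by $L/2$, for $R\geq\sqrt{L}$ the production term swallows the $\mathrm{D}$-terms outright, as in \eqref{q-r-equation-9}; no Cauchy--Schwarz step, hence no square-root loss, ever occurs. What survives is $-\tfrac12 DQ_R^{-}(\overline{q})\,(\overline{q^2}-(\overline{q})^2)\leq\tfrac{R}{2}(\overline{q^2}-(\overline{q})^2)$, and because the full variance mixes both signs, the Gronwall functional must be the coupled defect $\int_{\mathbb{S}}\bigl(\tfrac12(\overline{q_{+}^2}-(\overline{q}_{+})^2)+\overline{Q_R^{-}(q)}-Q_R^{-}(\overline{q})\bigr)dx$ of \eqref{q-r-equation-8} --- not your sum $H_R^{+}+H_R^{-}$ --- with Gronwall coefficient $R$ and zero data, giving \eqref{q-r-equation-10}; the final passage $R\to\infty$ uses Fatou rather than monotone convergence (the defect is a difference of two quantities each monotone in $R$, so it is not itself monotone). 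Without the displayed identity, the sign argument via concavity, and the moving initial time for the positive part, the linear-in-defect majorant you postulate does not exist, and the plan as written cannot close.
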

\begin{proof}
 We first apply \eqref{q-equation-2} to
$E(\lambda)=Q_{R}^{+}(\lambda)$ to obtain
\begin{equation}\label{q-r-equation-3}
\begin{split}
&\partial_t
(\overline{Q_{R}^{+}(q)}-Q_{R}^{+}(\overline{q}))+\partial_x \left(u
\left(\overline{Q_{R}^{+}(q)}-Q_{R}^{+}(\overline{q})\right)\right)\\
 &\leq \frac{R}{2}\left(
\int_{\mathbb{R}}\lambda(\lambda-R) {\bf 1}_{\lambda \geq
R}\,d\mu_{t, x}(\lambda)-\overline{q}(\overline{q}-R) {\bf
1}_{\overline{q} \geq
R}\right)\\
&
-\frac{1}{2}DQ_{R}^{+}\left(\overline{q})(\overline{q^2}-(\overline{q})^2\right)
+(\overline{DQ_{R}^{+}(q)}-DQ_{R}^{+}(\overline{q}))\left(2(u-\mu_0)(\mu_0+\kappa)
-\frac{1}{2}\Pi_1\right).
\end{split}
\end{equation}
Note that both $\overline{q}(t, x)$ and $q_{\varepsilon}(t, x)$ are
bounded above by $\frac{2}{t}+C$ with $C=L_0$ in
\eqref{entrop-con-1}. Thus, $\mbox{Supp} \, \mu_{t, x}(\cdot)
\subset (-\infty, \frac{2}{t}+C)$. Therefore, for $R\geq
\frac{2}{t}+C$, i. e., $t \geq \frac{2}{R-C}$ ( for $R>C$), one gets
from \eqref{q-r-equation-3} that
\begin{equation*}
\begin{split}
&\partial_t
(\overline{Q_{R}^{+}(q)}-Q_{R}^{+}(\overline{q}))+\partial_x \left(u
\left(\overline{Q_{R}^{+}(q)}-Q_{R}^{+}(\overline{q})\right)\right)\\
& \leq
(\overline{DQ_{R}^{+}(q)}-DQ_{R}^{+}(\overline{q}))\left(2(u-\mu_0)(\mu_0+\kappa)
-\frac{1}{2}\Pi_1\right),
\end{split}
\end{equation*}
which implies for $t \geq \frac{2}{R-C}$ that
\begin{equation}\label{q-r-equation-4}
\begin{split}
&\int_{\mathbb{S}}(\overline{Q_{R}^{+}(q)}-Q_{R}^{+}(\overline{q}))(t,
x) \, dx\leq
\int_{\mathbb{S}}(\overline{Q_{R}^{+}(q)}-Q_{R}^{+}(\overline{q}))(\frac{2}{R-C},
x) \, dx\\
&\qquad +\int_{\frac{2}{R-C}}^{t}
\int_{\mathbb{S}}(\overline{DQ_{R}^{+}(q)}-DQ_{R}^{+}(\overline{q}))
\left(2(u-\mu_0)(\mu_0+\kappa)-\frac{1}{2}\Pi_1\right) \, dx ds.
\end{split}
\end{equation}
For any $f$, define $f_{+}:=\max\{f, \, 0\}$, $f_{-}:=\min\{f, \,
0\}$. Using this notation, together with the definition of
$Q_{R}^{+}(q)$, one has
\begin{equation*}
\begin{split}
\overline{Q_{R}^{+}(q)}-Q_{R}^{+}(\overline{q})&=\frac{1}{2}(\overline{q_{+}^2}-(\overline{q}_+)^2)
-\frac{1}{2}\left\{ \int_{\mathbb{R}}(\lambda-R)^2 {\bf 1}_{\lambda
\geq R}\,d\mu_{t, x}(\lambda)-(\overline{q}-R)^2 {\bf
1}_{\overline{q} \geq
R}\right\}\\
&=\frac{1}{2}(\overline{q_{+}^2}-(\overline{q}_+)^2)
\end{split}
\end{equation*}
and
\begin{equation*}
\begin{split}
&\overline{DQ_{R}^{+}(q)}-DQ_{R}^{+}(\overline{q})=(\overline{q_{+}}-\overline{q}_+)
-\left\{ \int_{\mathbb{R}}(\lambda-R) {\bf 1}_{\lambda \geq
R}\,d\mu_{t, x}(\lambda)-(\overline{q}-R){\bf 1}_{\overline{q} \geq
R}\right\},
\end{split}
\end{equation*}
which applied to \eqref{q-r-equation-4} gives rise to
\begin{equation*}\label{q-r-equation-5}
\begin{split}
&\int_{\mathbb{S}}(\overline{q_{+}^2}-(\overline{q}_+)^2)(t, x) \,
dx\leq 2\int_{\frac{2}{R-C}}^{t}
\int_{\mathbb{S}}(\overline{q_{+}}-\overline{q}_+)\left(2(u-\mu_0)(\mu_0+\kappa)
-\frac{1}{2}\Pi_1\right)
\, dx ds
\\
&\quad
+\int_{\mathbb{S}}(\overline{q_{+}^2}-(\overline{q}_+)^2)(\frac{2}{R-C},
x) \, dx-2\int_{\frac{2}{R-C}}^{t}
\int_{\mathbb{S}}\left(2(u-\mu_0)(\mu_0+\kappa)-\frac{1}{2}\Pi_1\right)\\
 &\qquad \qquad \qquad \qquad \times\left(\int_{\mathbb{R}}(\lambda-R)
{\bf 1}_{\lambda \geq R}\,d\mu_{t, x}(\lambda)-(\overline{q}-R){\bf
1}_{\overline{q} \geq R}\right)\, dx ds.
\end{split}
\end{equation*}
Taking the limit $R\rightarrow +\infty$ and using \eqref{u-q-re-1},
\eqref{initial-q-2}, and the Lebesgue dominated convergence theorem,
we conclude that for all $t
> 0$
\begin{equation}\label{q-r-equation-6}
\begin{split}
&\int_{\mathbb{S}}(\overline{q_{+}^2}-(\overline{q}_+)^2)(t, x) \,
dx\leq 2\int_{0}^{t}
\int_{\mathbb{S}}(\overline{q_{+}}-\overline{q}_+)\left(2(u-\mu_0)(\mu_0+\kappa)-\frac{1}{2}\Pi_1\right)
\, dx ds.
\end{split}
\end{equation}
Since
$\overline{q^2}-(\overline{q})^2=(\overline{q_{+}^2}-(\overline{q}_+)^2)
+(\overline{q_{-}^2}-(\overline{q}_-)^2)$, it remains to estimate
the part associated with $(\overline{q_{-}^2}-(\overline{q}_-)^2)$,
which may be approximated by
$\overline{Q_{R}^{-}(q)}-Q_{R}^{-}(\overline{q})$ as $R$ goes to
$+\infty$.

Indeed, we apply \eqref{q-equation-2} to
$E(\lambda)=Q_{R}^{-}(\lambda)$ to obtain
\begin{equation*}
\begin{split}
&\partial_t
(\overline{Q_{R}^{-}(q)}-Q_{R}^{-}(\overline{q}))+\partial_x (u
(\overline{Q_{R}^{-}(q)}-Q_{R}^{+}(\overline{q})))\\
 &\leq -\frac{R}{2}\bigl\{
\int_{\mathbb{R}}\lambda(\lambda+R) {\bf 1}_{\lambda \leq
-R}\,d\mu_{t, x}(\lambda)-\overline{q}(\overline{q}+R) {\bf
1}_{\overline{q} \leq
-R}\bigr\}\\
&
-\frac{1}{2}DQ_{R}^{-}(\overline{q})(\overline{q^2}-(\overline{q})^2)
+(\overline{DQ_{R}^{-}(q)}-DQ_{R}^{-}(\overline{q}))\left(2(u-\mu_0)(\mu_0+\kappa)
-\frac{1}{2}\Pi_1\right).
\end{split}
\end{equation*}
Hence, integrating this inequality over $[0, t) \times \mathbb{S}$
and using \eqref{initial-q-2}, we get that
\begin{equation}\label{q-r-equation-7}
\begin{split}
&\int_{\mathbb{S}}(\overline{Q_{R}^{-}(q)}-Q_{R}^{-}(\overline{q}))(t,
x) \, dx\leq  \frac{R}{2}\int_{0}^{t}
\int_{\mathbb{S}}(\overline{q^2}-(\overline{q})^2)\,dxds\\
 &\qquad -\frac{R}{2}\int_{0}^{t}
\int_{\mathbb{S}}\left\{ \int_{\mathbb{R}}\lambda(\lambda+R) {\bf
1}_{\lambda \leq -R}\,d\mu_{t,
x}(\lambda)-\overline{q}(\overline{q}+R) {\bf 1}_{\overline{q} \leq
-R}\right\}\,dxds\\
&\qquad +\int_{0}^{t} \int_{\mathbb{S}}(\overline{DQ_{R}^{-}(q)}
-DQ_{R}^{-}(\overline{q}))\left(2(u-\mu_0)(\mu_0+\kappa)-\frac{1}{2}\Pi_1\right)\,dxds.
\end{split}
\end{equation}
While a direct computation yields
\begin{equation*}
\begin{split}
&\overline{Q_{R}^{-}(q)}-Q_{R}^{-}(\overline{q})\\
&=\frac{1}{2}(\overline{q_{-}^2}-(\overline{q}_-)^2)
-\frac{1}{2}\bigl\{ \int_{\mathbb{R}}(\lambda+R)^2 {\bf 1}_{\lambda
\leq -R}\,d\mu_{t, x}(\lambda)-(\overline{q}+R)^2 {\bf
1}_{\overline{q} \leq -R}\bigr\},
\end{split}
\end{equation*}
which together with \eqref{q-r-equation-6} and
\eqref{q-r-equation-7} leads to
\begin{equation}\label{q-r-equation-8}
\begin{split}
&\int_{\mathbb{S}}\left(\frac{1}{2}(\overline{q_{+}^2}-(\overline{q}_+)^2)
+\overline{Q_{R}^{-}(q)}-Q_{R}^{-}(\overline{q})\right)(t,
x) \, dx\\
 &\leq R\int_{0}^{t} \int_{\mathbb{S}}\left(\frac{1}{2}(\overline{q_{+}^2}-(\overline{q}_+)^2)
+\overline{Q_{R}^{-}(q)}-Q_{R}^{-}(\overline{q})\right)(s, x) \, dxds\\
&+\frac{R}{2}\int_{0}^{t} \int_{\mathbb{S}}\left\{
\int_{\mathbb{R}}R(\lambda+R) {\bf 1}_{\lambda \leq -R}\,d\mu_{t,
x}(\lambda)-R(\overline{q}+R) {\bf 1}_{\overline{q} \leq
-R}\right\}\,dxds\\
&+\int_{0}^{t}
\int_{\mathbb{S}}\left(\overline{DQ_{R}^{-}(q)}-DQ_{R}^{-}(\overline{q})+
\overline{q_{+}}-\overline{q}_+\right)\left(2(u-\mu_0)(\mu_0+\kappa)-\frac{1}{2}\Pi_1\right)\,dxds.
\end{split}
\end{equation}
Note that
\begin{equation*}
\begin{split}
0  \leq
\overline{DQ_{R}^{-}(q)}-DQ_{R}^{-}(\overline{q})&+\overline{q_{+}}-\overline{q}_{+}\\
& =-\left( \int_{\mathbb{R}}(\lambda+R) {\bf 1}_{\lambda \leq -R}\,
d\mu_{t, x}(\lambda)-(\overline{q}+R){\bf 1}_{\overline{q} \leq
-R}\right).
\end{split}
\end{equation*}
Let $L>0$ (for example, taking $L=L_0^2$ in \eqref{entrop-con-1}) be
a constant such that $
  \|2(u-\mu_0)(\mu_0+\kappa)-\frac{1}{2}\mu(q_{\varepsilon}^2)\|_{L^{\infty}} \leq \frac{L}{2}
$ (see \eqref{inequality-1-3}). Then,
\begin{equation*}
\begin{split}
&\int_{0}^{t}
\int_{\mathbb{S}}\left(\overline{DQ_{R}^{-}(q)}-DQ_{R}^{-}(\overline{q})+
\overline{q_{+}}-\overline{q}_+\right)\left(2(u-\mu_0)(\mu_0+\kappa)-\frac{1}{2}\Pi_1\right)\,dxds\\
&\leq \frac{R}{2}\int_{0}^{t} \int_{\mathbb{S}}\left(
\int_{\mathbb{R}}\frac{L}{R}(\lambda+R) {\bf 1}_{\lambda \leq
-R}\,d\mu_{t, x}(\lambda)-\frac{L}{R}(\overline{q}+R) {\bf
1}_{\overline{q} \leq -R}\right)\,dxds.
\end{split}
\end{equation*}
Therefore, for $R \geq \sqrt{L}$, we get from the fact that
$(\lambda+R) {\bf 1}_{\lambda \leq -R}$ is a concave function that
\begin{equation}\label{q-r-equation-9}
\begin{split}
&\frac{R}{2}\int_{0}^{t} \int_{\mathbb{S}}\left(
\int_{\mathbb{R}}R(\lambda+R) {\bf 1}_{\lambda \leq -R}\,d\mu_{t,
x}(\lambda)-R(\overline{q}+R) {\bf
1}_{\overline{q} \leq -R}\right)\,dxds\\
 &+\int_{0}^{t}
\int_{\mathbb{S}}\left(\overline{DQ_{R}^{-}(q)}-DQ_{R}^{-}(\overline{q})+
\overline{q_{+}}-\overline{q}_+\right)\left(2(u-\mu_0)(\mu_0+\kappa)-\frac{1}{2}\Pi_1\right)\,dxds\\
&\leq \frac{R}{2}\int_{0}^{t} \int_{\mathbb{S}}\left(
\int_{\mathbb{R}}(R-\frac{L}{R})(\lambda+R) {\bf 1}_{\lambda \leq
-R}\,d\mu_{t, x}(\lambda) -(R-\frac{L}{R})(\overline{q}+R) {\bf
1}_{\overline{q} \leq -R}\right)\,dxds\\& \leq 0.
\end{split}
\end{equation}
It follows from \eqref{q-r-equation-8}, \eqref{q-r-equation-9} and
Gronwall's inequality that
\begin{equation}\label{q-r-equation-10}
\begin{split}
\int_{\mathbb{S}}\left(\frac{1}{2}(\overline{q_{+}^2}-(\overline{q}_+)^2)
+\overline{Q_{R}^{-}(q)}-Q_{R}^{-}(\overline{q})\right)(t, x) \,
dx=0, \quad \forall \quad t \geq 0.
\end{split}
\end{equation}
Thus, by Fatou's lemma, one can take the limit as $R \rightarrow
+\infty$ in \eqref{q-r-equation-10} to conclude that
\begin{equation*}\label{q-r-equation-11}
\begin{split}
\int_{\mathbb{S}}(\overline{q^2}-(\overline{q})^2)(t, x) \, dx\leq
0, \quad \forall \quad t \geq 0.
\end{split}
\end{equation*}
From this, together with the fact $(\overline{q})^2 \leq
\overline{q^2}$, we get
\begin{equation*}\label{q-r-equation-12}
\begin{split}
\int_{\mathbb{S}}\overline{q^2}(t, x) \,
dx=\int_{\mathbb{S}}(\overline{q})^2(t, x) \, dx, \quad \forall
\quad t \geq 0,
\end{split}
\end{equation*}
which implies \eqref{Young-measure-1}.
\end{proof}

\subsection{Proof of Theorem \ref{thm-main-1}} \

\begin{proof}[Proof of  Theorem \ref{thm-main-1}]

With all the preparations given in the previous subsection, we are
in a position to conclude the proof of the theorem. Let $u(t, x)$ be
the limit of the viscous approximate solutions $u_{\varepsilon}(t,
x)$ as $\varepsilon \rightarrow 0^{+}$. It then follows from
Propositions \ref{thm-regu-1}, \ref{prop-entrop-con} and
\ref{prop-compactness-1} that $u(t, x) \in C(\mathbb{R}^{+} \times
\mathbb{S})\cap L^{\infty}(\mathbb{R}^{+}, H^{1}(\mathbb{S}))$,
$\Pi(t) \in L^{\infty}(\mathbb{R}^{+})$ and \eqref{energy-ine-1}
\eqref{entropy-time-1} hold.

Now we claim that
\begin{equation}\label{q-r-equation-14-a}q_{\varepsilon}=\partial_{x}u_{\varepsilon} \rightarrow q=\partial_{x}u
\quad \mbox{as} \quad \varepsilon \rightarrow 0^{+} \quad \mbox{in}
\quad L^2_{loc}(\mathbb{R}^{+} \times \mathbb{S}).
\end{equation}
Indeed, it follows from \eqref{u-q-re-1} and Lemma
\ref{lemma-Young-measure-1} that there exists a subsequence of
$\{u_{\varepsilon}(t, x)\}$, still denoted by itself, such that
\begin{equation*}\label{q-r-equation-15}q_{\varepsilon}=\partial_{x}u_{\varepsilon} \rightarrow q=\partial_{x}u
 \quad \mbox{in}
\quad L^{p_1}_{loc}(\mathbb{R}^{+}, L^{p_2}(\mathbb{S})) \quad
\forall \quad p_1 < \infty, \quad p_2 < 2.
\end{equation*}
This together with Proposition \ref{prop-high-inte} and a standard
interpolation theorem applied implies
\begin{equation}\label{q-r-equation-16}q_{\varepsilon}=\partial_{x}u_{\varepsilon} \rightarrow q=\partial_{x}u
 \quad \mbox{in}
\quad L^{p}_{loc}(\mathbb{R}^{+} \times \mathbb{S}) \quad \forall
\quad p < 3,
\end{equation} which gives
\eqref{q-r-equation-14-a}.

Thus, we get from \eqref{P-2-identity-1} and \eqref{identity-2} that
$ \Pi(t)=\mu((\partial_xu)^2)$.

Taking $\varepsilon \rightarrow 0^{+}$ in \eqref{appr-equation-1},
one finds from \eqref{q-r-equation-14-a} and Proposition
\ref{prop-compactness-1} that $u$ is an admissible weak solution to
\eqref{e1.4}. It then follows from \eqref{q-r-equation-16}
that $\partial_x u \in L^{p}_{loc}(\mathbb{R}^{+} \times
\mathbb{S})$ for any $1 \leq p < 3$. Hence the local space-time
higher integrability estimate \eqref{high-integrability-1-1} holds.
This completes the proof of Theorem \ref{thm-main-1}.
\end{proof}

\bigskip

\noindent {\bf Acknowledgments.} The work of Gui is partially
supported by the NSF of China under the grant 11001111, and the
Jiangsu University grants 10JDG141 and 10JDG157. The work of Liu is
partially supported by the NSF grant DMS-0906099 and the NHARP grant
003599-0001-2009.


\begin{thebibliography}{50}

\bibitem{acdm} {\small \textsc{Mark S. Alber, Roberto Camassa, Darryl D. Holm and Jerrold E.
Marsden}, On the Link between Umbilic Geodesics and Soliton
Solutions of Nonlinear PDEs, {\it Proc. R. Soc. Lond. A}, {\bf 450}
(1995), 677-692.}

\bibitem {BrCo1}{\small \textsc{A. Bressan and A. Constantin,} Global
conservative solutions of the Camassa-Holm equation, { \it Arch.
Ration. Mech. Anal.,} \textbf{183} (2007), 215-239. }

\bibitem{BrCo2}{\small \textsc {A. Bressan and A. Constantin},
Global dissipative solutions of the Camassa-Holm equation, {\it
Anal. Appl.}, {\bf 5} (2007), 1-27.}


%% \bibitem{arn} V. I. Arnold, \ Sur la g\'{e}om\'{e}trie
%% diff\'{e}rentielle des groupes de Lie de dimenson infinite er ses
%% application \`{a} l'hydrodynamique des fluids parfaits, {\it Ann.
%% Fourier Grenoble}, {\bf 16} (1966), 319-361.

%% \bibitem{bc} A. Bressan and A. Constantin,\ Global solutions of the Hunter-Saxton
%% equation, {\it SIAM J. Math. Anal.}, {\bf 37} (2005), 996-1026.

\bibitem{but} {\small \textsc{G. Buttazo, M. Giaquina, S. Hildebrandt},  One-Dimensional
Variational Problems: An Introduction, Clarendon Press, Oxford, 1998.}



\bibitem{cam} {\small \textsc{R. Camassa and D. D. Holm}, An integrable shallow water
equation with peaked solitons, {\it Phys. Rev. Lett.}, {\bf 71}
(1993), 1661-1664.}

%% \bibitem{che1}S. S. Chern and K. Tenenblat,\ Pseudo-spherical surfaces and evolution equations,
%% {\it Stud. Appl. Math.},  {\bf 74} (1986), 55-83.

%% \bibitem{che2} S. S. Chern and C.L. Terng,\ An analogue of B\"{a}cklund
%% theorem in affine geometry, {\it Rocky Mountain J. Math.},  {\bf 10} (1980), 105-124.

%% \bibitem{cho1}K. S. Chou and C. Z. Qu,\ Integrable equations arising from motions
%% of plane curves I, {\it Physica D},  {\bf 162} (2002), 9-33.

%% \bibitem{cho2}K. S. Chou and C. Z. Qu,\  Integrable equations arising from motions
%% of plane curves II, {\it J. Nonlinear Sci.},  {\bf 13} (2003),
%% 487-517.

\bibitem{CaHoTi}{\small \textsc{C. S. Cao, D. D. Holm and E. S.
Titi}, Traveling wave solutions for a class of one-dimensional
nonlinear shallow water wave models, \textit{J. Dynam. Differential
Equations}, \textbf{16} (2004), 167-178.}

\bibitem{chk} {\small \textsc{G. M. Coclite, H. Holden, and K. H. Karlsen}, Global weak solutions
to a generalized hyperelastic-rod wave equation, {\it SIAM J. Math.
Anal.}, {\bf 37} (2006), 1044-1069.}

%% \bibitem{coc} G. M. Coclite and K. H. Karlsen,\ On the well-posdeness
%% of the Degasperis-Procesi equation, {\it J. Funct. Anal.}, {\bf 233}
%% (2006), 60-91.

\bibitem{con1} {\small \textsc{A. Constantin},  On the Cauchy problem for the periodic
Camassa-Holm equation, {\it J. Differential Equations}, {\bf 141}
(1997), 218-235.}

\bibitem{con2} {\small \textsc{A. Constantin},  On the Blow-up of solutions of a periodic shallow
water equation, {\it J. Nonlinear Sci.}, {\bf 10} (2000), 391-399.}

%% \bibitem{con3} A. Constantin and J. Escher,\ Well-posedness, global
%% existence and blow-up phenomena for a periodic quasi-linear
%% hyperbolic equation, {\it Comm. Pure Appl. Math.}, {\bf 51} (1998),
%% 475-504.

\bibitem{con4} {\small \textsc{A. Constantin and J. Escher},  On the blow-up rate and
the blow-up set of breaking waves for a shallow water equation, {\it
Math. Z.}, {\bf 233} (2000), 75-91.}

\bibitem{con5} {\small \textsc{A. Constantin and J. Escher},  Wave breaking for nonlinear
nonlocal shallow water equations, {\it Acta Math.}, {\bf 181}
(1998), 229-243.}

\bibitem{CoJo}{\small \textsc{A. Constantin and R. S. Johnson},
Propagation of very long water waves, with vorticity, over
variable depth, with applications to tsunamis, {\it Fluid Dynam.
Res.}, \textbf{40}(2008), 175-211.}


\bibitem{conl2} {\small \textsc{A. Constantin and D. Lannes}, The hydrodynamical relevance  of the
Camassa-Holm and Degasperis-Procesi equations, {\it Arch. Ration.
Mech. Anal.,} {\bf 192} (2009), 165-186.}

\bibitem{con6} {\small \textsc{A. Constantin and H. P. McKean},  A shallow water equation on
 the circle, {\it Comm. Pure Appl. Math.}, {\bf 52} (1999), 949-982.}

\bibitem{con-m} {\small \textsc{A. Constantin and L. Molinet}, Obtital stability of solitary waves
for a shallow water equation, {\it Phys. D}, {\bf 157}(2001), 75-89.}

\bibitem{con-s} {\small \textsc{A. Constantin and W. A. Strauss}, Stability of peakons, {\it Comm.
Pure Appl. Math.}, {\bf 53}(2000), 603-610.}

%\bibitem{deg1} {\small \textsc{A. Degasperis and M. Procesi}, Asymptotic integrability, {\it Symmetry
%and perturbation theory (Rome, 1998),}  {\bf 23}, World Sci. Publ.,
%River Edge, NJ, 1999.}

\bibitem{deg2} {\small \textsc{A. Degasperis, D. D. Holm, and A. N. W. Hone}, Integrable
and non-integrable equations with peakons, Nonlinear physics: theory
and experiment, \textbf{II} (Gallipoli, 2002), 37, World Sci. Publ.,
River Edge, NJ, 2003.}

\bibitem{La}{\small \textsc{M. Lakshmanan}, Integrable nonlinear
wave equations and possible connections to tsunami dynamics, in
Tsunami and nonlinear waves, pp. 31-49, Springer, Berlin, 2007.}

\bibitem{Di-Mo} {\small \textsc{K. E. Dika and L. Molinet}, Stability of multi antipeakon-peakons profile, {\it Disc. Cont. Dyn. Sys-Ser.B}, {\bf 12}
(2009), 561-577.}

\bibitem{Di-Mo2} {\small \textsc{K. E. Dika and L. Molinet}, Stability of multipeakons, {\it Ann. I. H. Poincar\'e}, {\bf 26}
(2009), 1517-1532.}



\bibitem{Di-Li} {\small \textsc{R. J. DiPerna and P. L. Lions},  Ordinary differential equations,
transport theory and Sobolev spaces, {\it Invent. Math.}, {\bf 98}
(1989), 511-547.}

%% \bibitem{esc1} J. Escher and B. Kolev,\ The Degasperis-Procesi equation as a non-metric
%% Euler equation, arXiv:0908.0508v1, preprint.

%% \bibitem{esc2} J. Escher, M. Kohlmann, and B. Kolev,\ Geometric aspects of the periodic $\mu$DP equation,
%% arXiv:1004.0978v1, preprint.



%% \bibitem{esc3} J. Escher, Y. Liu, and Z. Yin,\ Global weak solutions and blow-up
%% structure for the Degasperis-Procesi equation, {\it J. Funct.
%% Anal.}, {\bf 241} (2006), 457-485.

%% \bibitem{esc4} J. Escher, Y. Liu, and Z. Yin,\ Shock waves and blow-up
%% phenomena for the periodic Degasperis-Procesi equation, {\it Indiana
%% Univ. Math. J.}, {\bf 56} (2007), 87-117.

\bibitem{flq} {\small \textsc{Y. Fu, Y. Liu and C. Qu}, On the blow-up structure for the
generalized periodic Camassa-Holm and Degasperis-Procesi equations,
preprint.}

\bibitem{fuc} {\small \textsc{B. Fuchssteiner and A. S. Fokas}, Symplectic structures,
their B\"{a}cklund transformations and hereditary symmetries, {\it
Phys. D}, {\bf 4} (1981/1982), 47-66.}

\bibitem{G-L-iumj} {\small \textsc{G. Gui, Y. Liu, and L. Tian},  Global existence and blow-up phenomena for  the peakon b-family of
equations, {\it Indiana University Mathematics Journal}, {\bf
57}(3) (2008), 1209-1234.}


%% \bibitem{gol}R. E. Goldstein and D. M. Petrich,\ The Korteweg-de Vries
%% hierarchy as dynamics of closed curves in the plane, {\it Phys. Rev.
%% Lett.}, {\bf 67} (1991), 3203-3206.

%% \bibitem{has}H. Hasimoto,\ A soliton on a vortex filament, {\it J. Fluid Mech.},  {\bf 51}
%% (1972), 477-485.

\bibitem{hun1} {\small \textsc{J. K. Hunter and R. Saxton},
Dynamics of director fields, {\it SIAM J. Appl. Math.}, {\bf 51}
(1991), 1498-1521.}

\bibitem{hun2} {\small \textsc{J. K. Hunter and Y. Zheng}, On a completely integrable hyperbolic
variational equation, {\it Physica D}, {\bf 79} (1994), 361-386.}

\bibitem{jmr} {\small \textsc{J. L. Joly, G. M\'{e}tvier, and J. Rauch}, Focusing at a point and
absorption of nonlinear oscillations, {\it  Trans. Amer. Math.
Soc.}, {\bf 347} (1995), 3921-3969.}

\bibitem{kat1} {\small \textsc{T. Kato},  On the Korteweg-de Vries equation, {\it Manuscripta Math.},
{\bf 28} (1979), 89-99.}


\bibitem{kat} {\small \textsc{T. Kato and G. Ponce}, Communtator estimates and the
Euler and Navier-Stokes equations, {\it Comm. Pure Appl. Math.},
{\bf 41} (1988), 891-907.}

\bibitem{khe} {\small \textsc{B. Khesin, J. Lenells, and G. Misiolek},  Generalized Hunter-Saxton
equation and the geometry of the group of circle diffeomorphisms,
{\it Math. Ann.}, {\bf 342} (2008), 617-656.}

%\bibitem{khe2} {\small \textsc{B. Khesin and G. Misiolek}, Euler equations on
%homogeneous spaces and Virasoro orbits, {\it Adv. Math.},  {\bf 176}
%(2003), 116-144.}

\bibitem{kou} {\small \textsc{S. Kouranbaeva}, The Camassa-Holm equation as a geodesic
flow on the diffeomorphism group, {\it J. Math. Phys.},  {\bf 40}
(1999), 857-868.}

\bibitem{len1} {\small \textsc{J. Lenells}, The
Hunter-Saxton equation describes the geodesic flow on a sphere, {\it
J. Geom. Phys.},  {\bf 57} (2007), 2049-2064.}

\bibitem{len2} {\small \textsc{J. Lenells, G. Misiolek, and F. Ti\u{g}lay}, Integrable evolution equations on
spaces of tensor densities and their peakon solutions, {\it Comm.
Math. Phys.}, {\bf 299} (2010), 129-161.}

%\bibitem{liu} {\small \textsc{Y. Liu and Z. Yin}, Global existence and blow-up phenomena
%for the Degasperis-Procesi equation, {\it Comm. Math. Phys.}, {\bf
%267} (2006), 801-820.}


\bibitem{mis1} {\small \textsc{G. Misiolek}, Classical solutions of the periodic Camassa-Holm
equation, {\it Geom. Funct. Anal.}, {\bf 12} (2002), 1080-1104.}

\bibitem{mis2} {\small \textsc{G. Misiolek}, A shallow water equation as a geodesic
flow on the Bott-Virasoro group, {\it J. Geom. Phys.}, {\bf 24} (1998), 203-208.}

\bibitem{lions} {\small \textsc{P.-L. Lions}, Mathematical topics in fluid mechanics,
{\it Vol. 2. Compressible models}, Oxford Lecture Series in
Mathematics and Its Applications, 10. Clarendon, Oxford University
Press, New York, 1998.}

\bibitem{GR} {\small \textsc{G. Rodriguez-Blanco}, On the Cauchy problem for
the Camassa-Holm equation, {\it Nonlinear Anal.,} {\bf 46} (2001),
309-327.}

\bibitem{simon} {\small \textsc{J. Simon}, Compact sets in the space $L^p((0, T), B)$,  {\it Ann. Mat. Pura.
Appl.}, {\bf 146} (1987), 65-96.}

\bibitem{wh} {\small \textsc{G. B. Whitham}, Linear and Nolinear
Waves, John Wiley \& Sons, New York, 1974.}

\bibitem{xin} {\small \textsc{Z. Xin}, Theory of viscous conservation laws, {\it Some recent topics
in conservation laws}, 141-193, L. Hsiao and Z. Xin, editors.
Studies in Advanced Mathematics, 15. American Mathematical
Society/International Press, 1999.}

\bibitem{xz} {\small \textsc{Z. Xin and P. Zhang}, On the weak solutions to a shallow water
equation, {\it Comm. Pure Appl. Math.}, {\bf 53} (2000), 1411-1433.}



\bibitem{young} {\small \textsc{L. C. Young}, Lectures on the calculus of variations and
optimal control theory, W. B. Saunders, Philadelphia-London-Toronto,
1969.}

\bibitem{zhangz} {\small \textsc{P. Zhang and Y. Zheng}, On
oscillations of an asymptotic equation of a nonlinear variational
wave equation, {\it Asymptotic Anal.}, {\bf 18} (1998), 307-327.}

\bibitem{zhangz-3} {\small \textsc{P. Zhang and Y. Zheng}, On the existence and uniqueness
of solutions to an asymptotic equation of a variational wave
equation, {\it Acta Math. Sin. (Engl. Ser.)}, {\bf 15} (1) (1999),
115-130.}

\bibitem{zhangz-4} {\small \textsc{P. Zhang and Y. Zheng}, Existence and uniqueness of solutions
of an asymptotic equation arising from a variational wave equation
with general data, {\it Arch. Ration. Mech. Anal.},  {\bf 155} (1)
(2000), 49-83.}

\bibitem{zhangz-2} {\small \textsc{P. Zhang and Y. Zheng}, Rarefactive solutions to a nonlinear
variational wave equation, {\it Comm. Partial Differential
Equations}, {\bf 26} (2001), 381-420.}




\end{thebibliography}
\end{document}